\documentclass{article}

\usepackage{amsmath}
\usepackage{amsthm}
\usepackage{amssymb}
\usepackage{mathtools}
\usepackage{tikz}
\usepackage{ytableau}
\usepackage{graphicx}
\usepackage{yhmath}
\usepackage{geometry}
\usepackage{marvosym}

\theoremstyle{plain}
\newtheorem{thm}{Theorem}[section]
\newtheorem{lem}[thm]{Lemma}
\newtheorem{prop}[thm]{Proposition}
\newtheorem{cor}[thm]{Corollary}

\theoremstyle{definition} 
\newtheorem{exmp}[thm]{Example}

\theoremstyle{remark}
\newtheorem{rem}[thm]{Remark}

\usepackage{cleveref} 

\crefname{thm}{Theorem}{Theorems}
\crefname{lem}{Lemma}{Lemmas}
\crefname{prop}{Proposition}{Propositions}
\crefname{cor}{Corollary}{Corollaries}
\crefname{exmp}{Example}{Examples}
\crefname{rem}{Remark}{Remarks}

\crefalias{thm}{thm}
\crefalias{lem}{lem}
\crefalias{prop}{prop}
\crefalias{cor}{cor}
\crefalias{exmp}{exmp}
\crefalias{rem}{rem}

\DeclareMathOperator{\Pfn}{\mathbf{Pfn}}
\DeclareMathOperator{\Set}{\bf{Set}}
\DeclareMathOperator{\PT}{PT}
\DeclareMathOperator{\PO}{PO}
\DeclareMathOperator{\id}{id}
\DeclareMathOperator{\dom}{dom}
\DeclareMathOperator{\im}{im}
\DeclareMathOperator{\Ext}{Ext}
\DeclareMathOperator{\Hom}{Hom}
\DeclareMathOperator{\GL}{GL}
\DeclareMathOperator{\E}{E}
\DeclareMathOperator{\SE}{SE}
\DeclareMathOperator{\EO}{EO}
\DeclareMathOperator{\SEO}{SEO}
\DeclareMathOperator{\Irr}{Irr}
\DeclareMathOperator{\IRep}{IRep}
\DeclareMathOperator{\tr}{tr}
\DeclareMathOperator{\sgn}{sgn}
\DeclareMathOperator{\trace}{trace}
\DeclareMathOperator{\Res}{Res}
\DeclareMathOperator{\Ind}{Ind}
\DeclareMathOperator{\Inf}{Inf}
\DeclareMathOperator{\Sym}{Sym}
\DeclareMathOperator{\Alt}{Alt}
\DeclareMathOperator{\dec}{dec}
\DeclareMathOperator{\pd}{pd}
\DeclareMathOperator{\gd}{glDim}
\DeclareMathOperator{\Rad}{Rad}

\newcommand{\Rt}{\widetilde{\mathcal{R}}_E}
\newcommand{\Rtm}{\widetilde{\mathcal{R}}_\mathcal{E}}
\newcommand{\Lt}{\widetilde{\mathcal{L}}_E}
\newcommand{\Ltm}{\widetilde{\mathcal{L}}_\mathcal{E}}

\title{The representation theory of the wreath product of a finite group with the monoid of all partial functions on a finite set as an EI-category algebra}
\author{Itamar Stein\thanks{Mathematics Unit, Shamoon College of Engineering, 77245 Ashdod, Israel}\\ \\ \Letter \, Steinita@gmail.com }

\date{}

\begin{document}

\maketitle

\begin{abstract}
Let $G$ be a finite group. We provide a description of the ordinary quiver of the complex monoid algebra of the wreath product $G \wr \PT_n$, where $\PT_n$ denotes the monoid of all partial functions on an $n$-element set. This description depends on the multiplicities of simple $G$-modules appearing in the decomposition of tensor products of simple $G$-modules. We also prove that the global dimension of this algebra is $n-1$. Both results are obtained by analyzing the associated Ehresmann EI-category related to the monoid. Finally, we describe the quiver of the algebra of the wreath product of $G$ with the submonoid of all order-preserving partial functions.
\end{abstract}

\section{Introduction}
A central aim in the study of monoid representations is to connect them with the general representation theory of associative algebras. Investigating the invariants of the monoid algebra of a finite monoid $M$ is of particular interest. Throughout this paper, all modules are over the field of complex numbers $\mathbb{C}$, so we consider the complex monoid algebra $\mathbb{C}M$. 

Among invariants of an algebra, the (ordinary) quiver plays a foundational 
role. A standard way to present a finite-dimensional algebra is 
via a quiver, which is a directed graph, bound by a set of relations 
on its paths. In this context, the quiver can be viewed as the generators 
part in a generators-and-relations presentation of the algebra. Even without explicit knowledge of the relations, the quiver alone encodes a significant amount of data about the algebra. 

Determining the quiver of a monoid algebra is a fundamental problem in the representation theory of finite monoids \cite[Chapter 17]{Steinberg2016}. To date, descriptions of the quiver have been obtained for 
many monoids and families of monoids
\cite{Denton2010,Margolis2015,Margolis2021B,Margolis2011,Margolis2012,Margolis2018B,Ringel2000,Saliola2007,Shahzamanian2021,Stein2016,Stein2026,Steinberg2025}.
 In particular, in \cite{Stein2016}, the author described the quiver of the monoid algebra $\mathbb{C}\PT_n$, where $\PT_n$ is the monoid of all partial functions on an $n$-element set. In this paper, we aim to generalize this result to the complex monoid algebra of the partial wreath product $G\wr \PT_n$ of any finite group $G$ with $\PT_n$.

 Partial wreath products of groups with (partial) transformation semigroups play a crucial role in the Krohn-Rhodes decomposition theory of finite automata and Krohn-Rhodes complexity theory \cite{qTheory2009}. This monoid was also studied recently in \cite[Section 9]{Gould2022}.

To compute the quiver of the monoid algebra of $G\wr \PT_n$, we follow the general framework established in \cite{Stein2016} for $\PT_n$. However, the introduction of the finite group $G$ into the structure introduces complications in the underlying representation theory.

After reviewing the necessary preliminaries in \Cref{sec:preliminaries}, we introduce several branching rules in \Cref{sec:branching_rules} that will be used in our proof.

The computation of the quiver is carried out in \Cref{sec:quiver_of_G_wr_PT_n}. Since $\mathbb{C}(G \wr \PT_n)$ is a finite $E$-Ehresmann and right restriction semigroup, its algebra is isomorphic to that of its associated Ehresmann category.

This category is the wreath product of $G$ with the category of all onto functions between subsets of an $n$-element set (see also \cite{Snowden2019}).
This is an EI-category, meaning that every endomorphism monoid is a group. To determine the quiver of an EI-category algebra, a well-known method (see \cite[Section 6.3.1]{Margolis2012} and \cite{Li2011}) reduces the problem to the representation theory of its endomorphism groups. We follow this approach here.

The endomorphism groups in this case are of the form $G \wr S_k$, where $S_k$ is the symmetric group. This computation utilizes known branching rules for $G \wr S_k$ to establish our main result.
The description of the quiver itself is given in \Cref{thm:Quiver_of_GwrE_n} and relies on the decomposition of tensor products of simple $G$-modules.

We remark that the similar problem of finding the quiver of the wreath product of $G$ with the category of injective functions between subsets of an $n$-element set was solved in \cite[Section 6]{Stein2017B}. However, in that case, the structure of the group $G$ plays a minimal role in the description.

In \Cref{sec:global_dimension_of_PT_n}, we prove that the global dimension of $\mathbb{C} (G\wr \PT_n)$ is $n-1$ by applying the EI-category framework to lift known results on the global dimension of $\PT_n$ from \cite{Stein2019}.

Finally, in \Cref{sec:quiver_of_PO_n}, we compute the quiver of the complex monoid algebra $\mathbb{C}(G\wr \PO_n)$, where $\PO_n$ is the monoid of all order-preserving partial functions on an $n$-element set. The method is identical to that applied in the case of $\PT_n$, but the branching rules are simpler in this setting.

\textbf{Acknowledgement:} The author is grateful to Professor Stuart Margolis for several discussions on the monoid $G\wr \PT_n$.

\section{Preliminaries} \label{sec:preliminaries}

\subsection{Partial wreath product}

Let $\mathcal{A}$ be a small category. We denote by $\mathcal{A}^{0}$
and $\mathcal{A}^{1}$ the sets of objects and morphisms of $\mathcal{A}$,
respectively. For $a,b\in\mathcal{A}^{0}$, we write $\mathcal{A}(a,b)$
for the hom-set of morphisms with domain $a$ and range $b$. Recall
that a monoid can be viewed as a category with a single object. Following
\cite{Harold2011}, we denote by $\Pfn$ the category whose objects
are finite sets and whose morphisms are partial functions, and by
$\Set$ the subcategory consisting of total functions as morphisms.

Let $G$ be a finite group with identity element $1_{G}$, and let $\PT(\mathcal{X},G)$ denote the set of all partial functions from $\mathcal{X}$ to $G$. For every $f \in \PT(\mathcal{X},G)$, we denote by $\dom(f)$ its domain. The set $\PT(\mathcal{X},G)$ is a monoid under pointwise multiplication, where for any $f_{1}, f_{2} \in \PT(\mathcal{X},G)$, the product $f_{1} \cdot f_{2}$ has domain $\dom(f_{1}) \cap \dom(f_{2})$ and is defined by:
\[
(f_{1} \cdot f_{2})(x) = f_{1}(x) \cdot f_{2}(x) \quad \text{for all } x \in \dom(f_{1}) \cap \dom(f_{2}).
\]
The identity element of this monoid is the constant function mapping every $x \in \mathcal{X}$ to $1_{G}$.

Let $H:\mathcal{A}\to\Pfn$ be a functor. Define a new category $G\wr_{H}\mathcal{A}$
in the following way. The set of objects is the same as the set of
objects of $\mathcal{A}$, that is, $(G\wr_{H}\mathcal{A})^{0}=\mathcal{A}^{0}$.
Given two objects $a,b\in\mathcal{A}^{0}$, the hom-set $(G\wr_{H}\mathcal{A})(a,b)$
is \[\{(f,m)\mid f\in\PT(H(a),G),\,m\in\mathcal{A}(a,b),\,\text{where}\,\dom(f)=\dom(H(m))\}.\]
Now, given two morphisms $(f,m)\in (G\wr_{H}\mathcal{A})(a,b)$ and
$(f^{\prime},m^{\prime})\in (G\wr_{H}\mathcal{A})(b,c)$ the composition
is 
\[
(f^{\prime},m^{\prime})\cdot(f,m)=((f^{\prime}\circ H(m))\cdot f,m^{\prime}m).
\]

It is routine to verify that this composition is well defined and
that $G\wr_{H}\mathcal{A}$ is indeed a category. If $\id_{a}$ is
the identity morphism of $a\in\mathcal{A}^{0}$ and ${\bf 1}_{H(a)}:H(a)\to G$
is the function defined by ${\bf 1}_{H(a)}(x)=1_{G}$ for every $x\in H(a)$,
then the identity morphism of the object $a\in(G\wr_{H}\mathcal{A})^{0}$
is $({\bf 1}_{H(a)},\id_{a})$. The category $G\wr_{H}\mathcal{A}$
is called the \emph{partial wreath product }of $G$ and $\mathcal{A}$
with respect to $H$. We will apply this construction in two special
cases. In the first case, $H$ is a functor $H:\mathcal{A}\to\Set$. In this case, the construction reduces to the standard wreath product of a group
with a category (see, for example, \cite{Snowden2019,Wells1980}).
In the second case, $\mathcal{A}$ is a monoid $M$ with identity
element $1_{M}$. Here the construction is well-known, even when $G$ and
$M$ are semigroups. It appears in \cite{Eilenberg1976} in the language
of transformation semigroups, and in \cite{Knauer1980} under the
name of 0-wreath products. See also \cite[Section 9.1]{Gould2022}
and references therein. In this case, a functor $H:M\to\Pfn$ is an
\emph{incomplete $M$-action}. That is, it consists of a set $\mathcal{X}$ and a monoid homomorphism 
$\varphi: M \to \PT_{\mathcal{X}}$. For $m \in M$ and $x \in \mathcal{X}$, 
we will write $m \bullet x$ instead of $\varphi(m)(x)$. For every 
$m \in M$, we write 
\[
\dom(m) = \dom(\varphi(m)) \subseteq \mathcal{X}.
\]
The monoid $M$ acts on the right of $\PT(\mathcal{X},G)$ by $f\ast m=f\circ\varphi(m)$.
Explicitly, for $x\in\mathcal{X}$, the partial function $f\ast m$
is given by 
\[
(f\ast m)(x)=\begin{cases}
f(m\bullet x) & m\bullet x\enspace\text{and}\enspace f(m\bullet x)\enspace\text{are both defined}\\
\text{undefined} & \text{otherwise}.
\end{cases}
\]
In this case, the partial wreath product $G\wr_{\mathcal{X}}M$ is
a monoid whose underlying set is 
\[
G\wr_{\mathcal{X}}M=\{(f,m)\mid m\in M,\quad f\in\PT(\mathcal{X},G),\quad\dom(f)=\dom(m)\}.
\]
The operation is given by 
\[
(f_{1},m_{1})\cdot(f_{2},m_{2})=((f_{1}\ast m_{2})\cdot f_{2},m_{1}m_{2}),
\]
where $(f_{1}\ast m_{2})(x)=f_1(m_2\bullet x)$
and the identity element is $({\bf 1}_{\mathcal{X}},1_{M})$.
\begin{rem}
Many authors adopt the convention of composing functions from left
to right. Under this convention, the monoid $M$ acts on the right
of $\mathcal{X}$ and on the left of $\PT(\mathcal{X},G)$. Consequently,
the multiplication in $G\wr_{\mathcal{X}}M$ is often written in a
different form in the literature.
\end{rem}

The case where $M$ is a group is of great importance. If $f:\mathcal{X}\to G$
is a (total) function, we denote by $f^{-1}:\mathcal{X}\to G$ the
function defined by $f^{-1}(x)=\left(f(x)\right)^{-1}$ for every
$x\in\mathcal{X}.$ If $M$ is a group, then $G\wr_{\mathcal{X}}M$
is a group, and the inverse of $(f,m)\in G\wr_{\mathcal{X}}M$ is
given by 
\[
(f,m)^{-1}=(\left(f\ast m^{-1}\right)^{-1},m^{-1}).
\]
There is a natural incomplete action of the monoid $\PT_{\mathcal{X}}$
on the set $\mathcal{X}$. Formally, the action is given by the identity
function $\varphi:\PT_{\mathcal{X}}\to\PT_{\mathcal{X}}$. In this
case, we simply write $G\wr\PT_{\mathcal{X}}$ for $G\wr_{\mathcal{X}}\PT_{\mathcal{X}}$.
If $\mathcal{X}=[n]=\{1,\ldots,n\}$, we denote the corresponding
wreath product by $G\wr\PT_{n}$. In this special case, the wreath
product has a natural description using matrices over $G\cup\{0\}$.
For $\alpha\in\PT_{n}$ and $f\in\PT([n],G)$ with $\dom(f)=\dom(\alpha)$,
we denote by $[f,\alpha]$ an $n\times n$ matrix over $G\cup\{0\}$
defined by 
\[
[f,\alpha]_{i,j}=\begin{cases}
f(j) & \alpha(j)=i,\\
0 & \text{otherwise}.
\end{cases}
\]
Let $\mathcal{M}$ denote the set of all such matrices. Note that
each $[f,\alpha]\in\mathcal{M}$ has at most one non-zero entry in
each column, so matrix multiplication is well-defined. It is straightforward
to verify that the function $\psi:G\wr\PT_{n}\to\mathcal{M}$ defined
by $\psi((f,\alpha))=[f,\alpha]$ is a monoid isomorphism.

\subsection{Ehresmann semigroups}
Let $S$ be a semigroup and let $E\subseteq S$ be a subset of idempotents.
We define two  equivalence relations  $\Lt$ and $\Rt$ on $S$.
\[
a\Lt b\iff(\forall e\in E\quad be=b\Leftrightarrow ae=a)
\]
\[
a\Rt b\iff(\forall e\in E\quad eb=b\Leftrightarrow ea=a).
\]
A subset $E\subseteq S$ of idempotents is called a \emph{subsemilattice} if it is a commutative subsemigroup. It is well known that any commutative semigroup of idempotents
has the structure of a semilattice (i.e. a poset where every two elements
have a meet) if one defines $a\leq b$ whenever $ab=ba=a$. A semigroup
$S$ with a subsemilattice $E\subseteq S$ is called \emph{right $E$-Ehresmann
}if every $\Lt$-class contains a unique idempotent from $E$ and
$\Lt$ is a right congruence. We denote the unique idempotent in the
$\Lt$-class of $a$ by $a^{\ast}$. Note that $a^{\ast}$ is the
unique minimal element $e\in E$ such that $ae=a$. It is well known
that $\Lt$ is a right congruence if and only if the identity $(ab)^{\ast}=(a^{\ast}b)^{\ast}$
holds for every $a,b\in S$.

Dually, we can consider semigroups for which every $\Rt$ class contains
a unique idempotent. We denote the unique idempotent in the $\Rt$
class of $a$ by $a^{+}$. Such a semigroup is called left $E$-Ehresmann
if $\Rt$ is a left congruence, or equivalently if $(ab)^{+}=(ab^{+})^{+}$
for every $a,b\in S$. A semigroup $S$ with a subsemilattice $E\subseteq S$
is called \emph{$E$-Ehresmann }if it is both left and right $E$-Ehresmann.
The semilattice $E$ is also called the set of \emph{projections }of
$S$.

A right (left) Ehresmann semigroup $S$ is called \emph{right (respectively,
left) restriction} if the ``right ample'' (respectively, ``left
ample'') identity $ea=a(ea)^{\ast}$ (respectively, $ae=(ae)^{+}a$)
holds for every $a\in S$ and $e\in E$.

For every Ehresmann semigroup $S$, we can associate a category ${\bf C}(S)$
in the following way. The object set of ${\bf C}(S)$ is the set $E$
of projections and the morphisms of ${\bf C}(S)$ are in one-to-one
correspondence with elements of $S$. For every $a\in S$, we associate 
a morphism $C(a)\in{\bf C}(S)^1$ with domain $a^{\ast}$ and range
$a^{+}$ and if the range of $C(a)$ is the domain of $C(b)$, the
composition $C(b)\cdot C(a)$ is defined to be $C(ba)$. For more
facts and proofs on Ehresmann semigroups and Ehresmann categories,
the reader is referred to \cite{Gould2010,Gould2010b}.

\subsection{Algebras and modules}

Let $A$ be an algebra. We will only discuss unital, finite-dimensional 
$\mathbb{C}$-algebras in this paper. Likewise, when we say that $V$ is a module 
over $A$ (or an $A$-module or an $A$-representation), we mean that 
$V$ is a finite-dimensional left module over $A$. For $a \in A$ and $v \in V$, 
we write $a \bullet v$ for the action of $a$ on the module element $v$. 
An $A$-module $V$ is \emph{simple} (or \emph{irreducible}) 
if $0$ is its only proper submodule.
The \emph{ordinary quiver} $Q$ of a finite dimensional
algebra $A$ is a directed graph defined in the following
way: The vertices of $Q$ are in a one-to-one correspondence with
the simple modules of $A$ (up to isomorphism). If $V_{i}$
and $V_{j}$ are simple modules of $A$ (identified with
two vertices of the quiver), then the number of arrows from $V_{i}$
to $V_{j}$ is
\[
\dim\Ext^{1}(V_{i},V_{j})
\]
where $\Ext^{1}(V,-)$ is the first right derived functor
of $\Hom(V,-)$, see \cite[Chapters 6-7]{Rotman2009}.
More about modules of algebras
and quivers can be found in \cite{Assem2006}.

In this paper, we discuss complex algebras of finite categories. Let $\mathcal{A}$ be a finite category. The \emph{category algebra} $\mathbb{C}\mathcal{A}$ is a vector space over $\mathbb{C}$ with the morphisms of $\mathcal{A}$  as its basis. It consists of all formal linear combinations:
\[
\left\{ \sum_{i=1}^{n} k_{i}m_{i} \mid k_{i} \in \mathbb{C},\, m_{i} \in \mathcal{A}^{1} \right\}.
\]
The multiplication in $\mathbb{C}\mathcal{A}$ is the linear extension of the following:
\[
m^{\prime} \cdot m = \begin{cases}
m^{\prime}m & \text{if } m^{\prime}m \text{ is defined in } \mathcal{A}, \\
0 & \text{otherwise}.
\end{cases}
\]
The algebra $\mathbb{C}\mathcal{A}$ has a unit element given by the sum of the identity morphisms of all objects in $\mathcal{A}$:
\[
1_{\mathbb{C}\mathcal{A}} = \sum_{c \in \mathcal{A}^0} \id_{c}.
\]

Since a monoid is a category with a single object, this construction naturally specializes to the definition of a \emph{monoid algebra}. In this case, the algebra $\mathbb{C}M$ consists of all formal linear combinations of elements of the monoid $M$, with multiplication defined as the linear extension of the monoid operation.

\subsection{Complex group representations}
Let $G$ be a finite group. If $V$ is a $\mathbb{C}$G-module, we will usually simply say that $V$ is a $G$-module (or a $G$-representation). Equivalently, a $G$-module is a pair $(V,\rho)$ of a $\mathbb{C}$-vector space $V$ and a group homomorphism $\rho:G\to\GL(V)$.
We denote the set of simple modules of $G$ (up to isomorphism) by $\IRep G$. It is well known that every $G$-module
is a finite direct sum of simple modules and that the
number of different simple $G$-modules (up to isomorphism)
is the number of conjugacy classes of $G$.
We denote the trivial
module of any group $G$ by $\tr_{G}$. Recall that if $V$
is a $G$-module, then $V^{\ast}=\Hom(V,\mathbb{C})$ is
also a $G$-module with operation $(g\bullet\varphi)(v)=\varphi(g^{-1}\bullet v)$.
Let $U$ and $V$ be $G$-modules. The inner tensor product
$U\otimes V$ is again a $G$-module with action defined by
$g\bullet(u\otimes v)=(g\bullet u)\otimes (g\bullet v)$ and extending linearly. Now, assume
that $U_{1}$ and $U_{2}$ are modules of $G_{1}$ and $G_{2}$,
respectively. The outer tensor product $U_{1}\otimes U_{2}$ of $U_{1}$
and $U_{2}$ is the ($G_{1}\times G_{2}$)-module where $(g_{1},g_{2})\bullet (u_{1}\otimes u_{2})=(g_{1}\bullet u_{1})\otimes(g_{2}\bullet u_{2})$.
Although the two types of tensor product can be distinguished by
context, we prefer to use a different notation for the outer tensor product,
denoting it by $\boxtimes$. Similarly, the simple tensors of $U\boxtimes V$
will be denoted by $u\boxtimes v$. It is well known that $\IRep(G_{1}\times G_{2})=\{U\boxtimes V\mid U\in\IRep G_{1},V\in\IRep G_{2}\}$.
The \emph{character $\chi_{U}$ }of the $G$-module $(U,\rho)$
is the function $\chi_{U}:G\to\mathbb{C}$ defined by $\chi_{U}(g)=\trace(\rho (g))$.
Recall that the multiplicity of $U\in\IRep G$ as a simple constituent
in some $G$-module $V$ is given by the inner product of characters
\[
\langle\chi_{U},\chi_{V}\rangle_G=\frac{1}{|G|}\sum_{g\in G}\chi_{U}(g)\overline{\chi_{V}(g)}.
\]
We may omit the subscript $G$ when the group is clear from the context.
Recall also that $\chi_{V^{\ast}}(g)=\overline{\chi_{V}(g)}$,
$\chi_{U\boxtimes V}((g_{1},g_{2}))=\chi_{U}(g_{1})\chi_{V}(g_{2})$ and $\chi_{U\otimes V}(g)=\chi_{U}(g)\chi_{V}(g)$.
In order to simplify notation, we will usually omit $\chi$ and
write $U$ also for the character of $U$. Hence the above inner product
will be written as
\[
\langle U,V\rangle=\frac{1}{|G|}\sum_{g\in G}U(g)\overline{V(g)}.
\]
Let $(U,\rho)$ be a $G$-module and let $H\leq G$ be a subgroup.
The \emph{restriction} of $(U,\rho)$ to $H$ is the $H$-module $(\Res_{H}^{G}U,\Res_{H}^{G}\rho)$
defined by 
\[
\Res_{H}^{G}\rho(h)(u)=\rho(h)(u)
\]
that is, restricting the homomorphism to the subgroup $H$. Note that
$\dim\Res_{H}^{G}U\allowbreak=\dim U$ and if $U$ is a simple
$G$-module, then $\Res_{H}^{G}U$ does not have to be a simple
$H$-module. Let $(U,\rho)$ be an $H$-module, the\emph{
induction} to $G$, denoted $(\Ind_{H}^{G}U,\Ind_{H}^{G}\rho)$ is
the tensor product 
\[
\Ind_{H}^{G}U=\mathbb{C}G\underset{\mathbb{C}H}{\otimes}U
\]
where the $G$ action is given by 
\[
g\bullet(s\otimes u)=(gs)\otimes u
\]
where $s\in\mathbb{C}G$ and $u\in U$. However, we will also use
the following more concrete description. Choose $S=\{s_{1},\ldots,s_{l}\}$
to be representatives of the left cosets of $H$ in $G$ (where $l=[G:H]$).
Note that any element $g\in G$ can be written in a unique way as
$g=s_{i}h$ where $s_{i}\in S$ and $h\in H$. Every element of $\Ind_{H}^{G}U$
is a formal sum of the form

\[
\alpha_{1}(s_{1},u_{1})+\ldots+\alpha_{l}(s_{l},u_{l})
\]
where $u_{i}\in U$ and $\alpha_{i}\in\mathbb{C}$. In other words,
as a vector space $\Ind_{H}^{G}U$ is ${\displaystyle \bigoplus_{i=1}^{l}U}$,
that is, $l$ copies of $U$. The action is defined on elements of
the form $(s_{i},u)$ by 
\[
g\bullet (s_{i},u)=(s_{j},h\bullet u)
\]
where $s_{j}$ and $h$ are unique such that $gs_{i}=s_{j}h$. The
required action is given by extending linearly. Note that $\dim\Ind_{H}^{G}U=[G:H]\dim U$.
It is important to mention that the modules $\Ind_{H}^{G}U$
and $\Res_{H}^{G}V$ depend not only on the groups $G$ and $H$
but also on the specific embedding of $H$ into $G$. Hence, we will
have to give the specific embeddings when discussing these modules.
Both induction and restriction are transitive and additive, that is,
if $K\leq H\leq G$ then
\[
\Ind_{H}^{G}\Ind_{K}^{H}U\cong\Ind_{K}^{G}U,\quad\Ind_{H}^{G}(U\oplus V)\cong\Ind_{H}^{G}U\oplus\Ind_{H}^{G}V
\]
and 
\[
\Res_{K}^{H}\Res_{H}^{G}U\cong\Res_{K}^{G}U,\quad\Res_{H}^{G}(U\oplus V)\cong\Res_{H}^{G}U\oplus\Res_{H}^{G}V.
\]
For the restriction this is a trivial statement, and for the induction the
proof is \cite[Propositions 1.1.10 and 1.1.11]{Tullio2014}. An important
fact that relates induction to restriction is the following one (for a
proof, see \cite[Corollary 1.1.20 ]{Tullio2014}). 
\begin{thm}[Frobenius reciprocity]
\label{thm:FrobeniusReciprocity} Let $H\leq G$ and let $U$ and
$V$ be $G$ and $H$-modules respectively. Then the multiplicity
of $V$ in $\Res_{H}^{G}U$ equals the multiplicity of $U$ in $\Ind_{H}^{G}V$. 
\end{thm}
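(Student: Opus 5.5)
The plan is to prove Frobenius reciprocity through characters. Since all modules are over $\mathbb{C}$, a multiplicity is an inner product of characters. The multiplicity of $V$ in $\Res_{H}^{G}U$ is $\langle \Res_{H}^{G}U, V\rangle_{H}$, and the multiplicity of $U$ in $\Ind_{H}^{G}V$ is $\langle U,\Ind_{H}^{G}V\rangle_{G}$. The inner product is conjugate-symmetric, and both quantities are nonnegative integers, hence real. It therefore suffices to prove the identity
\[
\langle \Ind_{H}^{G}V, U\rangle_{G}=\langle V,\Res_{H}^{G}U\rangle_{H}.
\]

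\textbf{Step 1: the induced character.} I will compute the character of $\Ind_{H}^{G}V$ from the concrete coset model above. Take representatives $s_{1},\ldots,s_{l}$ of the left cosets of $H$, so that $g\bullet(s_{i},v)=(s_{j},h\bullet v)$ where $gs_{i}=s_{j}h$. In this block decomposition $\bigoplus_{i=1}^{l}V$, the only contributions to the trace of $g$ come from the diagonal blocks, namely those with $j=i$. This happens exactly when $s_{i}^{-1}gs_{i}\in H$, and then that block acts as $h=s_{i}^{-1}gs_{i}$. Writing $\dot V$ for the extension of $V$ by zero outside $H$, this gives
\[
(\Ind_{H}^{G}V)(g)=\sum_{i=1}^{l}\dot V(s_{i}^{-1}gs_{i})=\frac{1}{|H|}\sum_{x\in G}\dot V(x^{-1}gx).
\]
The second equality holds because every $x\in G$ is uniquely $s_{i}h'$, and $\dot V$ is constant on $H$-conjugacy classes, being a class function on $H$ extended by zero.

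\textbf{Step 2: the inner product.} Substituting into $\frac{1}{|G|}\sum_{g}(\Ind V)(g)\overline{U(g)}$, I will swap the two sums. Then I use that $U$ is a class function on $G$, so $U(g)=U(x^{-1}gx)$, and reindex by $y=x^{-1}gx$. The sum over $x$ then gives a factor of $|G|$, and the zero extension cuts the sum over $y$ down to $H$. What remains is
\[
\frac{1}{|H|}\sum_{y\in H}V(y)\overline{U(y)}=\langle V,\Res_{H}^{G}U\rangle_{H},
\]
as required.

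The only step requiring care is Step 1. There I must check that the off-diagonal blocks contribute nothing to the trace, and that the sum is independent of the choice of coset representatives. Both follow directly from the uniqueness of the decomposition $gs_{i}=s_{j}h$. Alternatively, I could bypass characters and use the adjunction $\Hom_{G}(\mathbb{C}G\otimes_{\mathbb{C}H}V,U)\cong\Hom_{H}(V,\Res U)$ together with semisimplicity, but the character computation fits the paper's conventions better.
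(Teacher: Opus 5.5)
Your argument is correct, and it takes a different route from the one the paper relies on. The paper gives no proof of its own: it quotes the statement from \cite[Corollary 1.1.20]{Tullio2014}. What it actually uses afterwards is the character form $\langle \Ind_H^G V, U\rangle_G=\langle V,\Res_H^G U\rangle_H$, which is exactly the identity you establish.

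As far as I recall, the cited source obtains the multiplicity statement from the Hom-space form of reciprocity. This is essentially the adjunction $\Hom_G(\mathbb{C}G\otimes_{\mathbb{C}H}V,U)\cong\Hom_H(V,\Res_H^G U)$ that you mention as an alternative; multiplicities are then read off as dimensions of Hom-spaces via Schur's lemma and semisimplicity. That route is more conceptual, and its Hom-space part holds over any field. Your Frobenius-style computation is instead fully explicit and fits the concrete coset model of induction that the paper uses.

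Your computation also has a useful by-product. Your Step 1 formula $(\Ind_H^G V)(g)=\sum_{i:\, s_i^{-1}gs_i\in H} V(s_i^{-1}gs_i)$ is precisely the ``sum of the base character over the fixed cosets'' that the paper invokes, citing only a reference, when it computes the character of $W_{i,j}$ in \Cref{sec:branching_rules}.

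One point of precision. Identifying a multiplicity with an inner product of characters uses the orthonormality of irreducible characters. You should therefore say explicitly that $V$ (in the restriction) and $U$ (in the induction) are simple, which is the hypothesis the statement leaves implicit. Your character identity itself holds for arbitrary modules $U$ and $V$.
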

Using characters, Frobenius reciprocity can be written as the following
equality
\[
\langle\Ind_{H}^{G}V,U\rangle_G=\langle V,\Res_{H}^{G}U\rangle_H.
\]

Let $U$ be a $G$-module. Consider the \emph{swap transformation} $S: U \otimes U \to U \otimes U$ defined on simple tensors by $S(u_1 \otimes u_2) = u_2 \otimes u_1$. We define the \emph{symmetric square} $\Sym^2 U$ and the \emph{alternating square} $\Alt^2 U$ as the following submodules of the tensor product $U \otimes U$:
\begin{align*}
    \Sym^2 U &= \{ v \in U \otimes U \mid S(v) = v \}, \\
    \Alt^2 U &= \{ v \in U \otimes U \mid S(v) = -v \}.
\end{align*}
As $G$-modules, $U \otimes U \cong \Sym^2 U \oplus \Alt^2 U$. The characters of these modules are given by:
\[ (\Sym^2 U)(g) = \frac{1}{2}\left(U(g)^2 + U(g^2)\right) \quad \text{and} \quad (\Alt^2 U)(g) = \frac{1}{2}\left(U(g)^2 - U(g^2)\right). \]

\subsection{Representation theory of $S_n$ and $G \wr S_n$}
We will recall some elementary facts regarding the representation
theory of the symmetric group. More details can be found in \cite{James1981,Sagan2001}.
Recall that an \emph{integer composition} of $n$ is a tuple $\lambda=[\lambda_{1},\ldots,\lambda_{k}]$
of non-negative integers such that $\lambda_{1}+\cdots+\lambda_{k}=n$
while an \emph{integer partition} of $n$ (denoted $\lambda\vdash n$)
is an integer composition such that $\lambda_{1}\geq\lambda_{2}\geq\cdots\geq\lambda_{k}>0$.
From now on, when dealing with a partition $\lambda$ we will write
its elements in superscript $\lambda=[\lambda^{1},\ldots,\lambda^{k}]$
because we want to reserve the subscript for multipartitions. Note
that $0$ has one partition, namely the empty partition, denoted by
$\emptyset$. We can associate to any partition $\lambda$ a graphical
description called a \emph{Young diagram}, which is a table with $\lambda^{i}$
boxes in its $i$-th row. For instance, the Young diagram associated
to the partition $[3,3,2,1]$ of $9$ is: 
\[
\ydiagram{3,3,2,1}
\]

We will identify the two notions and regard integer partition and
Young diagram as synonyms. It is well-known that simple modules
of $S_{n}$ are indexed by integer partitions of $n$. We denote the
simple module associated to the partition $\lambda$
(also called its \emph{Specht module}) by $S^{\lambda}$. An explicit
description of $S^{\lambda}$ can be found in \cite[Section 2.3]{Sagan2001}.
It will be often convenient to draw the diagram $\lambda$ instead
of writing $S^{\lambda}$. For instance we may write

\[
\ydiagram{3}\oplus\ydiagram{2,1}
\]
instead of: $S^{\lambda}\oplus S^{\delta}$ for partitions $\lambda=[3]$
and $\delta=[2,1]$.

Let ${\bf n} = [n_1, \ldots, n_l]$ be an integer composition of $n$. A tuple $\Lambda = (\lambda_{1}, \ldots, \lambda_{l})$ such that $\lambda_{i} \vdash n_{i}$ for every $i$ is called a \emph{multipartition} of $n$ with $l$ components. We also call it a multipartition of the composition ${\bf n}$ and denote this by $\Lambda \Vdash {\bf n}$. We define a \emph{multi-Young diagram} to be a tuple of Young diagrams. As we identify partitions with Young diagrams, we also identify multipartitions with multi-Young diagrams. 

Let $G$ be a finite group with $l$ conjugacy classes. It is well-known (\cite[Theorem 2.6.1]{Tullio2014}) that multi-Young diagrams with $n$ boxes and $l$ components index the simple modules of the wreath product $G \wr S_{n}$. If $\Lambda \Vdash {\bf n}$ is a multi-Young diagram, then we denote by $S^\Lambda$ its associated $G \wr S_{n}$-module.

\section{Branching rules} \label{sec:branching_rules}
Let $G$ be a finite group with $l$ conjugacy classes. Fix $\IRep G=\{U_1,\ldots U_l\}$ to be its set of simple modules. In this section we will describe several known branching rules for the group $G\wr S_n$ that we will need later on.
\subsection{Restriction of $G \wr S_2$}
 Let $S_2=\{\id,s\}$ be the symmetric group of order $2$  where $s$ will always be the swap permutation. We denote by $\tr_2$ the trivial module of $S_2$ and by $\sgn_2$ the sign module of $S_2$. 
The group $G\times S_2$ can be embedded in $G \wr S_2$ by $\varphi(g,\sigma)=((g,g),\sigma)$. In this section, we study the restriction $\Res_{G\times S_2}^{G\wr S_2}$.
Every simple module  of $G\times S_2$ is of the form  $U_r \boxtimes \tr_2$ or $U_r \boxtimes \sgn_2$ where $U_r\in \IRep G$ ($r\in \{1,\ldots l\}$). 
The description of simple modules of $G \wr S_2$ is more complicated, but well-known (see \cite[Chapter 2]{Tullio2014} for the description of simple modules of $G \wr S_n$ in general).
If $U_i,U_j\in \IRep G$ are two non-isomorphic simple modules of $G$ then $U_i\boxtimes U_j$ is a $G\times G$ simple module. Clearly, $G\times G$ embeds in $G \wr S_2$ by $\varphi (g_1,g_2)=((g_1,g_2),\id)$. One type of simple $G \wr S_2$ module is obtained by induction
$W_{i,j}=\Ind_{G\times G}^{G\wr S_2}(U_i \boxtimes U_j)$. This gives us $\binom{l}{2}$ simple modules. Note that $\dim W_{i,j}=2\dim U_i \dim U_j$. The module $W_{i,j}$ corresponds  to the multipartition $\Lambda$ with $\lambda_i=\lambda_j=[1]$ and $\lambda_k=\emptyset$ for $k\neq i,j$.
Given $U_i\in \IRep G$, another type of simple module is the tensor product $U_i \boxtimes U_i$ where the action is \[((g_1,g_2),\sigma)\bullet (u_1\boxtimes u_2)= (g_{\sigma(1)}\bullet u_{\sigma(1)})\boxtimes (g_{\sigma(2)}\bullet u_{\sigma(2)}).\] We denote this simple module by $W_i^+$. This gives another $l$ simple modules. The module $W_i^+$ corresponds to the multipartition $\Lambda$ with $\lambda_i=[2]$ and $\lambda_k=\emptyset$ for $k\neq i$. 
Finally, we denote by $\Inf(\sgn_2)$ the \emph{inflation} of $\sgn_2$ to a $G \wr S_2$ module. This means that $((g_1,g_2),\sigma)$ acts like $\sigma$. The last $l$ simple modules of $G \wr S_2$ are obtained by the tensor product $W_i^+ \otimes \Inf(\sgn_2)$ and we denote them by $W_i^-$. The module $W_i^-$ corresponds to the multipartition $\Lambda$ with $\lambda_i=[1,1]$ and $\lambda_k=\emptyset$ for $k\neq i$. In total, we have $\frac{l^2+3l}{2}$ simple modules for $G\wr S_2$.
\begin{rem}
In the literature, the module $W_i^+$ is defined by the action
\[((g_1,g_2),\sigma)\bullet (u_1\boxtimes u_2)= (g_1\bullet u_{\sigma(1)})\boxtimes (g_2\bullet u_{\sigma(2)})\]
(in fact, with $\sigma^{-1}$, but $\sigma^{-1}=\sigma$ in our case.)
This variation arises because, in the context of groups, the composition in the wreath product $G \wr M$ is defined by \[(f_2,m_2)\cdot (f_1,m_1)=(f_2\cdot (m_2\star f_1),m_2m_1)\] where $(m\star f_1)(x)=f_1(m^{-1}x)$. As shown in \cite[Lemma 6.3]{Stein2017B}, the two definitions are isomorphic under the map
\[T(f,m)=(m\star f,m)\]
but it changes the concrete description of the module $W_i^+$.
In any case, we are interested in the action of elements of the form $((g,g),\sigma)$. For such elements, the two actions coincide, ensuring there is no ambiguity in our context.\end{rem}

The characters of these modules are well-known and essential for computing the desired restriction. Recall that we use the same notation for a module and its character. If we take $((1_G,1_G),\id)$ and $((1_G,1_G),s)$ as representatives of the cosets of $G\times G$ in $G\wr S_2$, it is easy to see that $((g,g), s)$ swaps the cosets and $((g,g), \id)$ fixes them. The character of the induction is summation of the base character over the fixed cosets (see \cite[Section 1.12]{Sagan2001}). Therefore, the character $W_{i,j}$ for an element $((g,g), \sigma) \in G \wr S_2$ is given by:
\[ W_{i,j}((g,g), \sigma) = 
\begin{cases} 
2 U_i(g) U_j(g) & \text{if } \sigma = \id, \\
0 & \text{if } \sigma = s.
\end{cases} \]
For the modules $W_i^+$ and $W_i^-$, the character values on the elements $((g,g), \sigma)$ are given as follows (see \cite[4.3.10 (vi), p.~150]{James1981}):
\[ W_i^{+}((g,g), \sigma) = 
\begin{cases} 
U_i(g)^2 & \text{if } \sigma = \id, \\
U_i(g^2) & \text{if } \sigma = s,
\end{cases} \]
and
\[ W_i^{-}((g,g), \sigma) = 
\begin{cases} 
U_i(g)^2 & \text{if } \sigma = \id, \\
-U_i(g^2) & \text{if } \sigma = s.
\end{cases} \]

\begin{lem} \label[lemma]{lem:Induction_GwrS_2Wij}
The multiplicities of $U_k \boxtimes \tr_2$ and $U_k \boxtimes \sgn_2$ in $\Res_{G \times S_2}^{G \wr S_2} W_{i,j}$ are both equal to the multiplicity of $U_k$ in $U_i \otimes U_j$.
\end{lem}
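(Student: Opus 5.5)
The plan is a direct character computation using the inner product of characters on $G\times S_2$, with the embedding $\varphi(g,\sigma)=((g,g),\sigma)$. For any $G\times S_2$-module $V$, the multiplicity of $U_k\boxtimes\tr_2$ in $V$ is
\[
\frac{1}{2|G|}\sum_{g\in G}\Bigl(V((g,g),\id)+V((g,g),s)\Bigr)\overline{U_k(g)},
\]
and the multiplicity of $U_k\boxtimes\sgn_2$ is the same expression with a minus sign in front of the $s$-term. This follows from $\chi_{U_k\boxtimes\tr_2}(g,\sigma)=U_k(g)$ and $\chi_{U_k\boxtimes\sgn_2}(g,\sigma)=\sgn(\sigma)U_k(g)$. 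I will take $V=\Res_{G\times S_2}^{G\wr S_2}W_{i,j}$, whose character on $(g,\sigma)$ is $W_{i,j}((g,g),\sigma)$ as computed just before the lemma.

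Substituting those values, the $s$-terms vanish because $W_{i,j}((g,g),s)=0$. So both multiplicities equal
\[
\frac{1}{2|G|}\sum_{g\in G}2U_i(g)U_j(g)\overline{U_k(g)}=\frac{1}{|G|}\sum_{g\in G}(U_i\otimes U_j)(g)\overline{U_k(g)}=\langle U_i\otimes U_j,U_k\rangle_G,
\]
using $\chi_{U\otimes V}=\chi_U\chi_V$. This is the multiplicity of $U_k$ in $U_i\otimes U_j$. Since the inner product is a nonnegative integer it is real, so the order of the arguments does not matter.

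The only point requiring care is the character value $W_{i,j}((g,g),s)=0$, and this is already stated in the paper. It follows from the induced-character formula: $((g,g),s)$ fixes no coset of $G\times G$, because $G\times G$ is normal of index $2$ and $((g,g),s)\notin G\times G$. With that in hand the rest is routine bookkeeping, so I expect no real obstacle.
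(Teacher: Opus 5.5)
Your proposal is correct and is essentially the paper's proof. Both compute the inner product on $G\times S_2$ and observe that the $\sigma=s$ terms vanish because $W_{i,j}((g,g),s)=0$. Since $\tr_2$ and $\sgn_2$ both take the value $1$ at $\id$, the remaining sum is $\frac{1}{2|G|}\sum_{g}2U_i(g)U_j(g)\overline{U_k(g)}=\langle U_i\otimes U_j,U_k\rangle_G$ in both cases.
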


\begin{proof}
Let $V$ be either the trivial module $\tr_2$ or the sign module $\sgn_2$ of $S_2$. The multiplicity of $U_k \boxtimes V$ in the restriction is:
\[ \langle \Res_{G \times S_2}^{G \wr S_2} W_{i,j}, U_k \boxtimes V \rangle_{G \times S_2} = \frac{1}{2|G|} \sum_{g \in G} \sum_{\sigma \in S_2} W_{i,j}((g,g), \sigma) \overline{U_k(g) V(\sigma)}. \]
As $W_{i,j}((g,g), s) = 0$, the sum over $\sigma \in S_2$ only has a contribution from $\sigma = \id$. Since $V(\id) = 1$ for both the trivial and sign modules, the expression becomes:
\[ \frac{1}{2|G|} \sum_{g \in G} W_{i,j}((g,g), \id) \overline{U_k(g)} = \frac{1}{2|G|} \sum_{g \in G} 2 U_i(g) U_j(g) \overline{U_k(g)} = \langle U_i \otimes U_j, U_k \rangle_G, \]
which completes the proof.
\end{proof}

\begin{lem} \label[lemma]{lem:Induction_GwrS_2}
The multiplicities of $U_k \boxtimes \tr_2$ and $U_k \boxtimes \sgn_2$ in the restrictions $\Res_{G \times S_2}^{G \wr S_2} W_i^+$ and $\Res_{G \times S_2}^{G \wr S_2} W_i^-$ are given by the multiplicities of $U_k$ in $\Sym^2 U_i$ and $\Alt^2 U_i$ as follows:
\begin{enumerate}
    \item $\langle \Res_{G \times S_2}^{G \wr S_2}W_i^+, U_k \boxtimes \tr_2 \rangle_{G \times S_2} = \langle \Sym^2 U_i, U_k \rangle_{G}$
    \item $\langle \Res_{G \times S_2}^{G \wr S_2}W_i^+, U_k \boxtimes \sgn_2 \rangle_{G \times S_2} = \langle \Alt^2 U_i, U_k \rangle_{G}$
    \item $\langle \Res_{G \times S_2}^{G \wr S_2}W_i^-, U_k \boxtimes \tr_2 \rangle_{G \times S_2} = \langle \Alt^2 U_i, U_k \rangle_{G}$
    \item $\langle \Res_{G \times S_2}^{G \wr S_2}W_i^-, U_k \boxtimes \sgn_2 \rangle_{G \times S_2} = \langle \Sym^2 U_i, U_k \rangle_{G}$
\end{enumerate}
\end{lem}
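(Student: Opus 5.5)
I will follow the template of the proof of \Cref{lem:Induction_GwrS_2Wij} and compute each multiplicity directly as an inner product of characters over $G\times S_2$. The only character values needed are those of $W_i^{\pm}$ on the elements $((g,g),\sigma)$, which are recorded above. For $V\in\{\tr_2,\sgn_2\}$ and $W\in\{W_i^+,W_i^-\}$ the multiplicity is
\[
\langle \Res_{G \times S_2}^{G \wr S_2} W, U_k \boxtimes V \rangle_{G \times S_2} = \frac{1}{2|G|} \sum_{g \in G} \Bigl( W((g,g),\id)\overline{U_k(g)}\,V(\id) + W((g,g),s)\overline{U_k(g)}\,V(s) \Bigr).
\]
Here $V(\id)=1$ in both cases, while $V(s)=1$ for $\tr_2$ and $V(s)=-1$ for $\sgn_2$. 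All character values of $S_2$ are real, so no conjugation issue arises on that factor.

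Substituting the displayed values, the summand becomes $U_i(g)^2 + \epsilon\, U_i(g^2)$ times $\overline{U_k(g)}$, where $\epsilon\in\{\pm1\}$ is the product of the sign $V(s)$ and the sign in $W((g,g),s)$ ($+$ for $W_i^+$, $-$ for $W_i^-$).
\begin{enumerate}
\item If $\epsilon=+1$ (cases 1 and 4), the expression $\frac12(U_i(g)^2+U_i(g^2))$ is exactly the character of $\Sym^2U_i$ at $g$. The multiplicity therefore equals $\frac{1}{|G|}\sum_g (\Sym^2U_i)(g)\overline{U_k(g)}=\langle \Sym^2 U_i,U_k\rangle_G$.
\item If $\epsilon=-1$ (cases 2 and 3), the expression $\frac12(U_i(g)^2-U_i(g^2))$ is the character of $\Alt^2U_i$, and the multiplicity is $\langle\Alt^2U_i,U_k\rangle_G$.
\end{enumerate}
Checking the sign bookkeeping shows this matches the four items: $W_i^+$ with $\tr_2$ and $W_i^-$ with $\sgn_2$ give $\epsilon=+1$, while the two mixed pairings give $\epsilon=-1$.

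There is no real obstacle; the argument is a direct character computation. The one point needing care is that the quoted character values of $W_i^{\pm}$ on $((g,g),s)$ are correct for the paper's convention of the wreath product action. The Remark above already addresses this: on diagonal elements $((g,g),\sigma)$ the two conventions agree. One can also verify the value directly: for $\sigma=s$, the trace of $u_1\boxtimes u_2\mapsto (g u_2)\boxtimes(g u_1)$ on $U_i\boxtimes U_i$ is $\tr(\rho(g)^2)=U_i(g^2)$, by the standard trace-of-swap identity $\tr(S\circ(A\otimes B))=\tr(AB)$. The value for $W_i^-$ then follows by tensoring with $\Inf(\sgn_2)$.

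Finally, $\langle\cdot,\cdot\rangle_G$ as written here has the $U_k$ factor conjugated. This agrees with the paper's convention, in which multiplicities are real, so the inner product is symmetric.
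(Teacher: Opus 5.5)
Your proposal is correct and is essentially the paper's proof. Both compute the inner product over $G\times S_2$ using the character values of $W_i^{\pm}$ on diagonal elements $((g,g),\sigma)$, and both recognize $\frac12\bigl(U_i(g)^2\pm U_i(g^2)\bigr)$ as the character of $\Sym^2U_i$ or $\Alt^2U_i$. Your sign bookkeeping also covers explicitly the three cases the paper leaves as ``similar'', and the trace-of-swap check confirms the quoted character value.
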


\begin{proof}
We prove the first case; the others follow by similar character computations. By the definition of the inner product of characters on $G \times S_2$, we have:
\[ \langle \Res_{G \times S_2}^{G \wr S_2}W_i^+, U_k \boxtimes \tr_2 \rangle_{G \times S_2} = \frac{1}{2|G|} \sum_{g \in G} \left( W_i^+((g,g), \id) \overline{U_k(g)} + W_i^+((g,g), s) \overline{U_k(g)} \right). \]
Substituting the character values $W_i^+((g,g), \id) = U_i(g)^2$ and $W_i^+((g,g), s) = U_i(g^2)$, we obtain:
\[ \frac{1}{|G|} \sum_{g \in G} \left( \frac{U_i(g)^2 + U_i(g^2)}{2} \right) \overline{U_k(g)}. \]
The term in parentheses is precisely the character of $\Sym^2 U_i$. Thus, the expression reduces to $\langle \Sym^2 U_i, U_k \rangle_G$, which is the multiplicity of $U_k$ in $\Sym^2 U_i$.
\end{proof}

\subsection{Littlewood-Richardson rules for small additions}
Let $G$ be a group with $l$ conjugacy classes. The group $(G\wr S_k)\times (G\wr S_r)$ is naturally embedded in $G \wr S_{k+r}$. If $f_1:\{1,\ldots,k\}\to G$ and $f_2:\{1,\ldots,r\}\to G$ we define $f:\{1,\ldots,k+r\}\to G$ by
\[
f(i) = \begin{cases} 
f_1(i) & i \leq k \\ 
f_2(i-k) & i > k
\end{cases}.
\]
Then, the natural embedding $\varphi : G\wr S_{k}\times G\wr S_r\to G\wr S_{k+r}$ is defined by $\varphi((f_1,\sigma_1),(f_2,\sigma_2))=(f,\sigma_1\sigma_2)$ where $\sigma_1$ ($\sigma_2$) can be regarded as an element of $S_{k+r}$ that fixes $\{k+1,\ldots, k+r\}$ ($\{1,\ldots, k\}$).
The branching rules for describing the induction from $G\wr S_{k}\times G\wr S_r$ to $G\wr S_{k+r}$ are known (see \cite[Theorem 4.7]{Ingram2009} or \cite[Theorem 4.5]{Stein2017B}), but we will give here only the cases for $r=1,2$ as this is what we need in this paper.

Let ${\bf k}=[k_1,\ldots,k_l]$ be a composition of $k$ and let $\Lambda\Vdash{\bf k}$ be a multi-Young diagram and let $S^\Lambda$ be the associated $G\wr S_k$-module. Let $U_i$ be a $G=G\wr S_1$-module. We can think of it as a multi-Young diagram  with one box in the $i$-th component and all the other components are empty. Let $Y^+_i(\Lambda)$ be the set of multi-Young diagrams that can be obtained from $\Lambda$ by adding one box at the $i$-th component. Conversely, let $Y^-_i(\Lambda)$ be the set of multi-Young diagrams obtained from $\Lambda$ by removing one box from the $i$-th component.

\begin{prop} \label[proposition]{prop:Simple_Branching}
The induction and restriction rules are as follows:
\begin{enumerate}
    \item \textbf{Induction:} 
    \[ \Ind_{(G\wr S_k)\times G}^{G\wr S_{k+1}} (S^\Lambda\boxtimes U_i)=\bigoplus_{\Gamma\in Y^+_i(\Lambda)}S^{\Gamma} \]
    \item \textbf{Restriction:} By Frobenius reciprocity, the restriction of a $G\wr S_{k+1}$-module $S^\Gamma$ to the subgroup $(G\wr S_k)\times G$ is given by:
    \[ \Res^{G\wr S_{k+1}}_{(G\wr S_k)\times G} (S^\Gamma) = \bigoplus_{i=1}^{l} \bigoplus_{\Lambda \in Y^-_i(\Gamma)} S^\Lambda \boxtimes U_i \]
\end{enumerate}
\end{prop}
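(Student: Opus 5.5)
The plan is to get the induction rule from the standard Clifford-theoretic construction of the simple $G\wr S_n$-modules and then read off the restriction rule by Frobenius reciprocity (\Cref{thm:FrobeniusReciprocity}). Recall the construction (\cite[Chapter 2]{Tullio2014}). For $\Lambda\Vdash{\bf k}=[k_1,\ldots,k_l]$, first extend $U_1^{\boxtimes k_1}\boxtimes\cdots\boxtimes U_l^{\boxtimes k_l}$ from $G^k$ to $\prod_m G\wr S_{k_m}$, tensor it with the inflation of $S^{\lambda_1}\boxtimes\cdots\boxtimes S^{\lambda_l}$, and then induce to $G\wr S_k$. Equivalently,
\[S^\Lambda\cong\Ind_{\prod_m G\wr S_{k_m}}^{G\wr S_k}\bigl(\widetilde{U_m^{\boxtimes k_m}}\otimes\Inf S^{\lambda_m}\bigr)_m.\]
This reduces the claim to a statement about each factor $G\wr S_{k_m}$ together with transitivity of induction.

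The first step is to write $S^\Lambda\boxtimes U_i$ as an induced module from the subgroup $\bigl(\prod_m G\wr S_{k_m}\bigr)\times G$ of $(G\wr S_k)\times G$. Transitivity then rewrites $\Ind_{(G\wr S_k)\times G}^{G\wr S_{k+1}}(S^\Lambda\boxtimes U_i)$ as an induction that passes through $\prod_{m\neq i}G\wr S_{k_m}\times G\wr S_{k_i+1}$. The factor $(G\wr S_{k_i})\times G\le G\wr S_{k_i+1}$ is the only one that changes. On that factor I need
\[\Ind_{(G\wr S_{k_i})\times G}^{G\wr S_{k_i+1}}\bigl(\widetilde{U_i^{\boxtimes k_i}}\otimes\Inf S^{\lambda_i}\bigr)\boxtimes U_i\cong \widetilde{U_i^{\boxtimes (k_i+1)}}\otimes\Inf\bigl(\Ind_{S_{k_i}}^{S_{k_i+1}}S^{\lambda_i}\bigr).\]
This is the usual tensor identity $\Ind(\Res X\otimes Y)\cong X\otimes\Ind Y$, applied with $X=\widetilde{U_i^{\boxtimes(k_i+1)}}$. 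The classical branching rule for $S_n$ (\cite{Sagan2001}) says $\Ind_{S_{k_i}}^{S_{k_i+1}}S^{\lambda_i}=\bigoplus S^{\mu}$, with $\mu$ running over diagrams obtained by adding one box to $\lambda_i$. Substituting this back and inducing to $G\wr S_{k+1}$ gives $\bigoplus_{\Gamma\in Y^+_i(\Lambda)}S^\Gamma$, which is part 1.

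Part 2 follows from Frobenius reciprocity. The simple $(G\wr S_k)\times G$-modules are the $S^\Lambda\boxtimes U_i$. By part 1, the multiplicity of $S^\Lambda\boxtimes U_i$ in $\Res S^\Gamma$ equals the multiplicity of $S^\Gamma$ in $\Ind(S^\Lambda\boxtimes U_i)$. That multiplicity is $1$ if $\Gamma\in Y^+_i(\Lambda)$, equivalently $\Lambda\in Y^-_i(\Gamma)$, and $0$ otherwise. Complete reducibility then gives the stated direct sum.

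The main obstacle is the first step: checking that the embeddings match. Specifically, $(G\wr S_{k_i})\times G$ has to sit inside the $i$-th block so that the extension $\widetilde{U_i^{\boxtimes(k_i+1)}}$ restricts to $\widetilde{U_i^{\boxtimes k_i}}\boxtimes U_i$. After a suitable conjugation by a permutation, which does not change induced modules up to isomorphism, this comes down to the explicit action formula in the same style as $W_i^+$. As a sanity check that avoids the heavy construction, one can compare dimensions: $[G\wr S_{k+1}:(G\wr S_k)\times G]=k+1$, and $\dim S^\Lambda=\binom{k}{k_1,\ldots,k_l}\prod_m f^{\lambda_m}(\dim U_m)^{k_m}$. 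Summing the latter over $\Gamma\in Y_i^+(\Lambda)$ and using the hook-length identity $\sum_\mu f^\mu=(k_i+1)f^{\lambda_i}$ confirms the count. Alternatively, I would simply cite \cite[Theorem 4.7]{Ingram2009} for the case $r=1$.
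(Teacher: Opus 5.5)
Your argument is correct, and it takes a genuinely different route from the paper. The paper gives no argument of its own for this proposition. It presents the case $r=1$ as a special case of the known Littlewood--Richardson rule for inducing from $(G\wr S_k)\times(G\wr S_r)$ to $G\wr S_{k+r}$, and only cites the literature for it. It then reads off the restriction rule by Frobenius reciprocity, exactly as in your last paragraph; your fallback of citing the general rule is precisely what the paper does.

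You instead derive part~1 from the Clifford-theoretic construction of $S^\Lambda$. You use transitivity of induction, a conjugation that places the new coordinate next to the $i$-th block, the tensor identity with $X=\widetilde{U_i^{\boxtimes(k_i+1)}}$, and the one-box branching rule for $S_{k_i}\le S_{k_i+1}$. Besides the restriction property of $X$ that you flag, your displayed isomorphism also silently uses that induction commutes with inflation here: $\Ind_{(G\wr S_{k_i})\times G}^{G\wr S_{k_i+1}}\Inf S^{\lambda_i}\cong\Inf\Ind_{S_{k_i}}^{S_{k_i+1}}S^{\lambda_i}$. This holds because the subgroup contains the kernel $G^{k_i+1}$ of the projection onto $S_{k_i+1}$, and it deserves a line.

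The citation buys brevity and covers all $r$ at once. Your route is self-contained modulo the construction of the simple modules, and it extends cheaply to the $r=2$ rules stated next. Since $W_{i,j}$ is induced from $G\times G$, that rule is just your part~1 applied twice. The $W_i^{\pm}$ rules follow from the same tensor-identity argument, with Pieri's rule in place of the one-box rule. Your route also makes explicit where a labelling convention enters, namely which extension of $U_i^{\boxtimes 2}$ is called $[2]$. This is invisible for $r=1$, because adding one box commutes with conjugating partitions.
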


For the case $r=2$, we define the sets of multi-Young diagrams obtained by adding exactly two boxes to $\Lambda$:
\begin{itemize}
    \item $Y^+_{i,j}(\Lambda)$: one box added to component $i$ and one box added to component $j$ ($i \neq j$).
    \item $Y^+_{i,H^2}(\Lambda)$: a \textit{horizontal strip} of two boxes added to component $i$. This means that we cannot add the two boxes in the same column.
    \item $Y^+_{i,V^2}(\Lambda)$: a \textit{vertical strip} of two boxes added to component $i$. This means that we cannot add the two boxes in the same row.
\end{itemize}
\begin{prop} \label[proposition]{Inductopn_two_steps}
The induction from $(G \wr S_k) \times (G \wr S_2)$ to $G \wr S_{k+2}$ for the various simple $G \wr S_2$-modules is given by:
\[ \Ind_{(G\wr S_k)\times (G \wr S_2)}^{G\wr S_{k+2}} (S^\Lambda \boxtimes W_{i,j}) = \bigoplus_{\Gamma \in Y^+_{i,j}(\Lambda)} S^{\Gamma} \]
\[ \Ind_{(G\wr S_k)\times (G \wr S_2)}^{G\wr S_{k+2}} (S^\Lambda \boxtimes W_i^+) = \bigoplus_{\Gamma \in Y^+_{i,H^2}(\Lambda)} S^{\Gamma} \]
\[ \Ind_{(G\wr S_k)\times (G \wr S_2)}^{G\wr S_{k+2}} (S^\Lambda \boxtimes W_i^-) = \bigoplus_{\Gamma \in Y^+_{i,V^2}(\Lambda)} S^{\Gamma} \]
\end{prop}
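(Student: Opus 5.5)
The plan is to reduce all three identities to the one-box rule of \Cref{prop:Simple_Branching}, using transitivity and additivity of induction. First treat $W_{i,j}$, which is itself an induced module: $W_{i,j}=\Ind_{G\times G}^{G\wr S_2}(U_i\boxtimes U_j)$. Since $S^\Lambda\boxtimes -$ commutes with induction from $G\times G$ to $G\wr S_2$ (the two groups are direct factors), we get
\[
\Ind_{(G\wr S_k)\times(G\wr S_2)}^{G\wr S_{k+2}}(S^\Lambda\boxtimes W_{i,j})\cong \Ind_{(G\wr S_k)\times G\times G}^{G\wr S_{k+2}}(S^\Lambda\boxtimes U_i\boxtimes U_j).
\]
We factor this induction through $(G\wr S_{k+1})\times G$, applying \Cref{prop:Simple_Branching} twice. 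This gives $\bigoplus_{\Lambda'\in Y_i^+(\Lambda)}\bigoplus_{\Gamma\in Y_j^+(\Lambda')}S^\Gamma$. Because $i\neq j$, each $\Gamma\in Y^+_{i,j}(\Lambda)$ arises exactly once, namely from $\Lambda'=\Gamma$ with the box in component $j$ removed. This yields the first formula. We must check that the subgroup embeddings agree with the natural ones of the paper. That is a bookkeeping matter of the coordinates $k+1,k+2$.

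For $W_i^\pm$ the same computation gives
\[
\Ind_{(G\wr S_k)\times(G\wr S_2)}^{G\wr S_{k+2}}\bigl(S^\Lambda\boxtimes(W_i^+\oplus W_i^-)\bigr)=\Ind(S^\Lambda\boxtimes U_i\boxtimes U_i).
\]
Here we use $\Ind_{G\times G}^{G\wr S_2}(U_i\boxtimes U_i)\cong W_i^+\oplus W_i^-$, which follows from Frobenius reciprocity: $W_i^\pm$ restrict to $G\times G$ as $U_i\boxtimes U_i$, and the dimensions match. The two-step induction then gives $\bigoplus_{\Lambda'\in Y_i^+(\Lambda)}\bigoplus_{\Gamma\in Y_i^+(\Lambda')}S^\Gamma$. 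In this sum each $\Gamma$ obtained by adding two boxes in component $i$ appears once if the boxes lie in the same row or in the same column, and twice otherwise. On the other side, $Y^+_{i,H^2}$ and $Y^+_{i,V^2}$ together cover exactly these $\Gamma$ with the same multiplicities: a disconnected pair of boxes lies in both, a domino in one row lies only in $H^2$, and a domino in one column lies only in $V^2$. So the sum of the two claimed right-hand sides is correct, and the remaining task is to separate the $+$ and $-$ parts.

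Separating the two parts is the main obstacle. The plan is to reduce to the case $G$ trivial, i.e.\ to the classical Pieri rules for $S_n$. Write $\Lambda$ componentwise. Inducing from $(G\wr S_k)\times(G\wr S_2)$ can be done by first inducing within the $i$-th block. By the standard construction of $S^\Lambda$ (\cite[Chapter 2]{Tullio2014}), we have $S^\Lambda=\Ind\bigl(\boxtimes_m (\widetilde{U_m}^{\otimes k_m}\otimes \Inf S^{\lambda_m})\bigr)$ from $\prod_m G\wr S_{k_m}$. Transitivity then reduces the problem to $G\wr S_{k_i}\times G\wr S_2\to G\wr S_{k_i+2}$ applied to $\widetilde{U_i}^{\otimes k_i}\otimes\Inf S^{\lambda_i}$ and $W_i^\pm=\widetilde{U_i}^{\otimes 2}\otimes\Inf(\tr_2\text{ or }\sgn_2)$. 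Tensoring with the $G\wr S_{k_i+2}$-module $\widetilde{U_i}^{\otimes (k_i+2)}$ commutes with induction, since it is the projection formula $\Ind(\Res X\otimes Y)\cong X\otimes\Ind Y$. The inflation from $S_n$ also commutes with induction over the kernel $G^n$. The problem is thereby reduced to $\Ind_{S_{k}\times S_2}^{S_{k+2}}(S^{\lambda}\boxtimes\tr_2)$ and $\Ind_{S_{k}\times S_2}^{S_{k+2}}(S^{\lambda}\boxtimes\sgn_2)$. By Pieri's rule \cite{Sagan2001}, these are the sums over horizontal and vertical strips respectively. Finally we reassemble with the other components via the first part of the argument. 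Alternatively, we could simply cite the general Littlewood--Richardson rule for wreath products \cite[Theorem 4.7]{Ingram2009} and specialize it to $r=2$.
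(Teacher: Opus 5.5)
Your proposal is correct, but it takes a different route from the paper. The paper gives no proof at all. It quotes the general Littlewood--Richardson rule for $G\wr S_{k+r}$ (\cite[Theorem 4.7]{Ingram2009} or \cite[Theorem 4.5]{Stein2017B}) and specializes it to $r=2$, which is exactly your closing ``alternatively''.

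Your main argument instead derives the result from smaller ingredients:
\begin{itemize}
\item You get the $W_{i,j}$ case from the one-box rule of \Cref{prop:Simple_Branching}, by transitivity through $(G\wr S_{k+1})\times G$.
\item You get the $W_i^{\pm}$ cases by unwinding the Clifford-theoretic construction of $S^\Lambda$. The projection formula and the fact that inflation over $G^{k_i+2}$ commutes with induction bring everything down to the classical Pieri rule for $S_{k_i}\times S_2\le S_{k_i+2}$.
\end{itemize}
This is essentially the proof of the cited general theorem, carried out only in the case needed.

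The gain is transparency. It shows directly why the $\pm$ split is horizontal versus vertical strips: it is literally $\tr_2$ versus $\sgn_2$ in Pieri's rule. The paper's route buys brevity at the cost of treating the rule as a black box. Your count via $W_i^+\oplus W_i^-\cong\Ind_{G\times G}^{G\wr S_2}(U_i\boxtimes U_i)$ is a good sanity check, but it is logically redundant once the Pieri reduction is in place.

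A few bookkeeping points should be stated explicitly:
\begin{itemize}
\item By the paper's definitions, $W_i^{+}$ and $W_i^{-}$ are exactly $\widetilde{U_i}^{\otimes 2}\otimes\Inf(\tr_2)$ and $\widetilde{U_i}^{\otimes 2}\otimes\Inf(\sgn_2)$, consistent with the labeling of \cite[Theorem 2.6.1]{Tullio2014}.
\item Merging the $i$-th Young block with the coordinates $k+1,k+2$ requires conjugating by a permutation of $[k+2]$. This does not change induced modules up to isomorphism.
\item The final reassembly is just the defining induction for $S^\Gamma$, with $\lambda_i$ replaced by the Pieri shapes. It is not ``the first part of the argument''.
\end{itemize}
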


\section{The quiver of $\mathbb{C}(G \wr \PT_n)$} \label{sec:quiver_of_G_wr_PT_n}
\subsection{Ehresmann structure} \label{sec:Ehresmann}
For every $X \subseteq [n]=\{1,\ldots n\}$ we define $\id_X$ to be the partial identity of the set $X$
\[ \id_X(x) = 
\begin{cases} 
x & \text{if } x\in X, \\
\text{undefined} & \text{otherwise,}
\end{cases} \]
and ${\bf 1}_X:[n]\to G$ to be the function
\[ {\bf 1}_X(x) = 
\begin{cases} 
1_G & \text{if } x\in X, \\
\text{undefined} & \text{otherwise.}
\end{cases} \]
Set $\mathcal{E}=\{({\bf 1}_X, \id_X)\mid X\subseteq [n]\}$ and note that this is a subsemilattice of $G \wr \PT_n$. It is routine to verify that the idempotent $(\mathbf{1}_{X}, \id_{X})$ is a left (right) identity of $(f, \alpha) \in G \wr \PT_{n}$ if and only if $\im(\alpha) \subseteq X$ (respectively, $\dom(\alpha)\subseteq X$). Therefore, we have that two elements $(f, \alpha)$ and $(g, \beta)$ of $G \wr \PT_n$ are $\Ltm$-related if and only if $\dom(\alpha) = \dom(\beta)$, and they are $\Rtm$-related if and only if $\im(\alpha) = \im(\beta)$. In other words, if $X=\dom(\alpha)$ and $Y=\im(\alpha)$ then
\[(f,\alpha)^\ast=({\bf 1}_X,\id_X),\quad(f,\alpha)^+=({\bf 1}_Y,\id_Y).\]
In \cite[Proposition 9.11]{Gould2022}, it was proved that  $G \wr \PT_n$ is a right $\mathcal{E}$-restriction monoid. It is also easy to prove that the left congruence condition holds.
\begin{lem}
The relation $\Rtm$ is a left congruence on $G \wr \PT_n$.
\end{lem}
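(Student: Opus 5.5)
We need to show that if $(f,\alpha)\,\Rtm\,(g,\beta)$, then for every $(h,\gamma)\in G\wr\PT_n$ we have $(h,\gamma)(f,\alpha)\,\Rtm\,(h,\gamma)(g,\beta)$. By the description of $\Rtm$ given just before the statement, this amounts to the following implication: if $\im(\alpha)=\im(\beta)$, then the second coordinates $\gamma\alpha$ and $\gamma\beta$ of the two products also have equal images. (Equivalently, one could verify the identity $(ab)^+=(ab^+)^+$, but the direct route is simpler.)

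The plan has three steps.

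First, we compute the second coordinate of a product. By the multiplication rule, $(h,\gamma)\cdot(f,\alpha)=((h\ast\alpha)\cdot f,\gamma\alpha)$. So the underlying partial function of the product is simply the composition $\gamma\alpha$ in $\PT_n$. The group-valued coordinate plays no role in determining $\Rtm$-classes, because $\Rtm$ depends only on the image of the $\PT_n$-component.

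Second, we compute the image of a composition of partial functions:
\[
\im(\gamma\alpha)=\{\gamma(y)\mid y\in\im(\alpha)\cap\dom(\gamma)\}=\gamma\bigl(\im(\alpha)\cap\dom(\gamma)\bigr).
\]
This depends on $\alpha$ only through $\im(\alpha)$.

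Third, we conclude. If $\im(\alpha)=\im(\beta)$, the formula above gives $\im(\gamma\alpha)=\im(\gamma\beta)$. Hence the two products are $\Rtm$-related, and $\Rtm$ is a left congruence.

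The only point that needs care is the claim, stated before the lemma, that $\Rtm$-classes are determined by $\im(\alpha)$. I will double-check this as follows. The projection $(\mathbf 1_X,\id_X)$ is a left identity of $(f,\alpha)$ exactly when $\id_X\alpha=\alpha$, and the function coordinate is then unchanged because $\mathbf 1_X\ast\alpha$ is identically $1_G$ on $\dom(\alpha)$. The condition $\id_X\alpha=\alpha$ holds precisely when $\im(\alpha)\subseteq X$. Two subsets $\im(\alpha)$ and $\im(\beta)$ are contained in exactly the same sets $X$ if and only if they are equal, which confirms the characterization.

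I do not expect any real obstacle; this is a short verification.
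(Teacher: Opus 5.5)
Your proof is correct and follows the same route as the paper. Both reduce $\Rtm$ to equality of images of the $\PT_n$-components and then observe that $\im(\gamma\alpha)=\gamma\bigl(\im(\alpha)\cap\dom(\gamma)\bigr)$ depends only on $\im(\alpha)$; your extra check of the image characterization of $\Rtm$ just spells out the step the paper calls routine.
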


\begin{proof}
Let $(f_1, \alpha_1), (f_2, \alpha_2) \in G \wr \PT_n$ such that $(f_1, \alpha_1) \Rtm (f_2, \alpha_2)$. This is equivalent to $\im(\alpha_1) = \im(\alpha_2)$. Let $(f_3, \alpha_3)$ be any element in $G \wr \PT_n$. Note that $\im(\alpha_3\alpha_1) = \im(\alpha_3\alpha_2)$ so
\[ (f_3, \alpha_3)(f_1, \alpha_1)=((f_{3}\ast\alpha_{1})\cdot f_{1},\alpha_{3}\alpha_{1}) \tilde{\mathcal{R}}_{\mathcal{E}}((f_{3}\ast\alpha_{2})\cdot f_{2},\alpha_{3}\alpha_{2})= (f_3, \alpha_3)(f_2, \alpha_2). \]
Thus, $\Rtm$ is a left congruence.
\end{proof}
It follows that $G \wr \PT _n$ is a $\mathcal{E}$-Ehresmann and right restriction monoid. This will be crucial in view of the following fact.
\begin{thm}[{\cite[Theorem 1.5]{Stein2018erratum}}] \label{thm:Ehresmann_Iso}
Let $M$ be a finite $E$-Ehresmann and right restriction monoid and let ${\bf C}(M)$ be its associated Ehresmann category. Then, for every unital commutative ring $\Bbbk$ there is an isomorphism of algebras $\Bbbk M\simeq \Bbbk {\bf C}(M)$
\end{thm}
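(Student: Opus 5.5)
The plan is to write down an explicit linear map $\Phi:\Bbbk M\to\Bbbk{\bf C}(M)$, show that it is multiplicative and unital, and then show that it is bijective by a triangularity (M\"obius inversion) argument over the semilattice $E$. On basis elements $a\in M$ I would set
\[
\Phi(a)=\sum_{e\in E,\ e\leq a^{\ast}} C(ae),
\]
and extend linearly. Before anything else I would record the basic identity: for $e\leq a^{\ast}$ we have $(ae)^{\ast}=(a^{\ast}e)^{\ast}=e^{\ast}=e$. This uses only that $\Lt$ is a right congruence, in the form $(ab)^{\ast}=(a^{\ast}b)^{\ast}$, together with $e^{\ast}=e$ for $e\in E$. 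Consequently the terms $C(ae)$ in $\Phi(a)$ have pairwise distinct domains $e$, and they are pairwise distinct morphisms.

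For unitality, note that $1^{\ast}=1$. Hence $\Phi(1)=\sum_{e\in E}C(e)$, which is the sum of all identity morphisms of ${\bf C}(M)$, i.e.\ the unit of $\Bbbk{\bf C}(M)$.

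For multiplicativity, fix $a,b\in M$. In the product
\[
\Phi(b)\Phi(a)=\sum_{f\leq b^{\ast}}\ \sum_{e\leq a^{\ast}} C(bf)\,C(ae),
\]
a term survives exactly when $(ae)^{+}=(bf)^{\ast}=f$. Since $f$ is then determined by $e$, the double sum collapses to a sum over those $e\leq a^{\ast}$ with $(ae)^{+}\leq b^{\ast}$. For these terms, $C(b(ae)^{+})C(ae)=C(b(ae)^{+}ae)=C(bae)$. The condition $(ae)^{+}\leq b^{\ast}$ is equivalent to $b^{\ast}ae=ae$, using that $(ae)^{+}$ is the least projection acting as a left identity on $ae$. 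So
\[
\Phi(b)\Phi(a)=\sum_{e\leq a^{\ast},\ b^{\ast}ae=ae} C(bae).
\]

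The key step, which I expect to be the main obstacle, is to identify this index set with $\{e\in E : e\leq (ba)^{\ast}\}$. This is where the right ample identity enters. Put $h=(b^{\ast}a)^{\ast}$. The right ample identity gives $b^{\ast}a=ah$, and the congruence identity gives $(ba)^{\ast}=(b^{\ast}a)^{\ast}=h$. Note also that $h\leq a^{\ast}$, since $ah\cdot a^{\ast}=b^{\ast}aa^{\ast}=b^{\ast}a=ah$.
\begin{itemize}
\item If $e\leq h$, then $b^{\ast}ae=ahe=ae$, so $e$ lies in the index set.
\item Conversely, suppose $e\leq a^{\ast}$ and $ahe=ae$. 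Taking ${}^{\ast}$ of both sides and using the basic identity gives $he=e$, i.e.\ $e\leq h$.
\end{itemize}
Hence the index set is exactly $\{e\leq (ba)^{\ast}\}$. Moreover, for such $e$ we have $bae=(ba)e$. Therefore $\Phi(b)\Phi(a)=\Phi(ba)$.

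Finally, bijectivity. Both algebras have bases indexed by $M$, so it suffices to show $\Phi$ is invertible. Order $M$ by any linear extension of the preorder $a\mapsto a^{\ast}$ in the semilattice $E$. With respect to this order, $\Phi(a)=C(a)+\sum_{e<a^{\ast}}C(ae)$, and every correction term $C(ae)$ has domain $e<a^{\ast}$. So the matrix of $\Phi$ is unitriangular over $\Bbbk$, hence invertible over any unital commutative ring $\Bbbk$. Explicitly, the inverse is $C(a)\mapsto\sum_{e\leq a^{\ast}}\mu(e,a^{\ast})\,ae$, where $\mu$ is the M\"obius function of $E$. This would complete the proof of the isomorphism $\Bbbk M\simeq\Bbbk{\bf C}(M)$.
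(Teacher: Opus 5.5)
Your proof is correct. Each step checks out: the identity $(ae)^{\ast}=e$ for $e\leq a^{\ast}$; the collapse of the double sum via $f=(ae)^{+}$; the right ample identity in the form $b^{\ast}a=a(ba)^{\ast}$, used to identify the index set with $\{e\leq (ba)^{\ast}\}$; and the unitriangularity and M\"obius inversion argument for bijectivity. The paper gives no proof of its own and simply cites \cite[Theorem 1.5]{Stein2018erratum}, which to my knowledge uses essentially this route: the Steinberg-style map $a\mapsto\sum_{e\leq a^{\ast}}C(ae)$, with the right restriction hypothesis needed exactly for multiplicativity.
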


Therefore, we can switch to studying the representation theory of the associated category \mbox{${\bf C}(G\wr \PT_n)$}. We start by describing it.
The objects of $\mathbf{C}(G \wr \PT_n)$ are in one-to-one correspondence with the elements of $\mathcal{E}$. Thus, the objects are of the form $(\mathbf{1}_X, \id_X)$ for $X \subseteq [n]$. For two subsets $X, Y \subseteq [n]$, the hom-set $\mathbf{C}(G \wr \PT_n)((\mathbf{1}_X, \id_X), (\mathbf{1}_Y, \id_Y))$ is identified with the elements $(f, \alpha) \in G \wr \PT_n$ such that 
\[ (f, \alpha)^* = (\mathbf{1}_X, \id_X) \quad \text{and} \quad (f, \alpha)^+ = (\mathbf{1}_Y, \id_Y). \]
Note that in this case, $X = \dom(\alpha)$ and $Y = \im(\alpha)$. We denote by $C(f, \alpha)$ the morphism associated with $(f, \alpha)$. 

Let $\E_{n}$ be the category defined as follows. The objects of $\E_{n}$ are subsets $X \subseteq [n]$. For $X, Y \subseteq [n]$, the hom-set $\E_{n}(X, Y)$ contains all the onto (total) functions $\alpha \colon X \to Y$. Let $G$ be a group. We denote by $G \wr \E_{n}$ the wreath product $G \wr_H \E_{n}$, where $H \colon \E_{n} \to \Set$ is the inclusion functor.

\begin{prop}
There is an isomorphism of categories $\mathbf{C}(G \wr \PT_{n}) \simeq G \wr \E_{n}$.
\end{prop}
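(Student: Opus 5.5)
We will write down an explicit functor $F\colon \mathbf{C}(G\wr\PT_n)\to G\wr\E_n$ and check that it is a bijection on objects and on each hom-set, and that it preserves identities and composition. On objects we set $F((\mathbf{1}_X,\id_X))=X$. This is clearly a bijection between $\mathcal{E}$ and the subsets of $[n]$.

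For morphisms, take $C(f,\alpha)$ with $X=\dom(\alpha)$ and $Y=\im(\alpha)$. We define $F(C(f,\alpha))=(f|_X,\alpha|_X)$, where $\alpha|_X\colon X\to Y$ is $\alpha$ viewed as a total onto function and $f|_X\colon X\to G$ is $f$ viewed as a total function on $X$. Since $\dom(f)=\dom(\alpha)=X$, this is well defined. Recall that in $G\wr_H\E_n$ with $H$ the inclusion functor, a morphism $X\to Y$ is a pair $(f',\beta)$ with $\beta\colon X\to Y$ onto and $f'\colon X\to G$ total, the latter because $\dom(f')=\dom(H(\beta))=X$. So $F(C(f,\alpha))$ is indeed a morphism from $X$ to $Y$.

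The inverse map sends $(f',\beta)$ to the pair of partial functions on $[n]$ with domain $X$ given by $f'$ and $\beta$. The result lies in $G\wr\PT_n$, and its $\ast$ and $+$ are $(\mathbf{1}_X,\id_X)$ and $(\mathbf{1}_Y,\id_Y)$ by the description of $\Ltm$ and $\Rtm$ given above. So $F$ is bijective on each hom-set. Identities also correspond: the identity of the object $(\mathbf{1}_X,\id_X)$ is $C(\mathbf{1}_X,\id_X)$, and $F$ sends it to $(\mathbf{1}_{H(X)},\id_X)$.

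It remains to check composition, and this is the only step requiring a short computation. Let $C(f_1,\alpha_1)\colon X\to Y$ and $C(f_2,\alpha_2)\colon Y\to Z$. Their composite is $C((f_2,\alpha_2)(f_1,\alpha_1))=C((f_2\ast\alpha_1)\cdot f_1,\ \alpha_2\alpha_1)$. Because $\im\alpha_1=Y=\dom\alpha_2$, we get $\dom(\alpha_2\alpha_1)=X$. For $x\in X$ we have
\[
((f_2\ast\alpha_1)\cdot f_1)(x)=f_2(\alpha_1(x))\,f_1(x),
\]
which is defined everywhere on $X$. This is exactly $((f_2\circ H(\alpha_1))\cdot f_1)(x)$, the first component of the composite $(f_2,\alpha_2)\cdot(f_1,\alpha_1)$ in $G\wr_H\E_n$, and the second components agree as well. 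Hence $F$ is a functor that is bijective on objects and hom-sets, and therefore an isomorphism of categories.

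The main obstacle is purely bookkeeping: keeping the partial and total viewpoints straight in the domain condition $\dom(f)=\dom(H(m))$, and checking that the formula for multiplication in $G\wr\PT_n$ matches the formula for composition in $G\wr_H\E_n$ under the convention that functions are composed from right to left.
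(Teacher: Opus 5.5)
Your proof is correct and follows the same approach as the paper. The paper defines the identical map $\psi((\mathbf{1}_X,\id_X))=X$, $\psi(C(f,\alpha))=(f,\alpha)$, checks only that it lands in the correct hom-sets, and calls the rest ``easy to see''; your verification that identities are preserved and that $C((f_2\ast\alpha_1)\cdot f_1,\alpha_2\alpha_1)$ matches the composite $((f_2\circ\alpha_1)\cdot f_1,\alpha_2\alpha_1)$ in $G\wr_H\E_n$ fills in exactly those omitted details.
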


\begin{proof}
It follows immediately from the above discussion. Formally, an isomorphism \[\psi \colon \mathbf{C}(G \wr \PT_{n}) \to G \wr \E_{n}\] is defined by $\psi((\mathbf{1}_{X}, \id_{X})) = X$ and $\psi(C(f, \alpha)) = (f, \alpha)$. Note that if $C(f, \alpha)$ is a morphism in $\mathbf{C}(G \wr \PT_{n})((\mathbf{1}_{X}, \id_{X}), (\mathbf{1}_{Y}, \id_{Y}))$, then $\dom(\alpha) = X$ and $\im(\alpha) = Y$, so $\alpha$ is indeed a total onto function $\alpha \colon X \to Y$. Moreover, $\dom(f) = X$, so $f \in G^{X}$ as required in the definition of $G \wr \E_{n}$. It is easy to see now that $\psi$ is an isomorphism.
\end{proof}

\subsection{The skeleton}\label{sec:The_Skeleton}
If $\mathcal{C}$ and $\mathcal{D}$ are equivalent categories, then their algebras are Morita equivalent (see \cite[Proposition 2.2]{Webb2007}). Since the quiver of an algebra is an invariant of Morita equivalence, we can switch our attention to a simpler category which is equivalent to $G \wr \E_{n}$. 

We can take a full subcategory with one object from every isomorphism class in $G \wr \E_{n}$. This category is called the \textit{skeleton} of $G \wr \E_n$. To describe it, we first have to characterize which objects in $G \wr \E_{n}$ are isomorphic.
\begin{lem}\label[lemma]{lem:Isomorphis_objects}
Two objects $X, Y$ in $G \wr \E_n$ are isomorphic if and only if $|X| = |Y|$.
\end{lem}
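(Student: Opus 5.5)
The plan is to prove both directions directly from the composition rule of $G \wr \E_n$, namely $(f',\beta)\cdot(f,\alpha)=((f'\circ\alpha)\cdot f,\beta\alpha)$.

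\emph{Isomorphic objects have the same size.} Suppose $(f,\alpha)\colon X\to Y$ is an isomorphism with inverse $(f',\beta)\colon Y\to X$. Then $\beta\alpha=\id_X$ and $\alpha\beta=\id_Y$ in $\E_n$, because the second coordinate of a composite is just the composite of the underlying functions. Hence $\alpha\colon X\to Y$ is a bijection and $|X|=|Y|$. Alternatively, every morphism $X\to Y$ in $\E_n$ is onto, so $|X|\ge|Y|$ whenever $\E_n(X,Y)\neq\emptyset$; applying this in both directions gives equality.

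\emph{Objects of the same size are isomorphic.} Suppose $|X|=|Y|$ and choose a bijection $\alpha\colon X\to Y$, which is onto and hence a morphism of $\E_n$. Consider the morphisms $(\mathbf{1}_X,\alpha)\colon X\to Y$ and $(\mathbf{1}_Y,\alpha^{-1})\colon Y\to X$. Their composite is
\[(\mathbf{1}_Y,\alpha^{-1})\cdot(\mathbf{1}_X,\alpha)=((\mathbf{1}_Y\circ\alpha)\cdot\mathbf{1}_X,\alpha^{-1}\alpha)=(\mathbf{1}_X,\id_X),\]
which is the identity of $X$. The symmetric computation gives $(\mathbf{1}_Y,\id_Y)$, so $X$ and $Y$ are isomorphic. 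If a more general statement is wanted, any $(f,\alpha)$ with $\alpha$ bijective is invertible, with inverse $((f\circ\alpha^{-1})^{-1},\alpha^{-1})$, mirroring the group inverse formula given earlier.

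There is no real obstacle here. The only point needing care is checking that the composition convention makes the $G$-components cancel correctly in the inverse; choosing the trivial labelling $\mathbf{1}$ avoids this issue entirely.
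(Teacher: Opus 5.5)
Your proof is correct and follows essentially the same route as the paper. For the forward direction, your alternative argument (onto maps exist in both directions, so $|X|\ge|Y|$ and $|Y|\ge|X|$) is exactly the paper's. For the converse, the paper uses the same pair $(\mathbf{1}_X,\alpha)$, $(\mathbf{1}_Y,\alpha^{-1})$ and the observation $\mathbf{1}_Y\circ\alpha=\mathbf{1}_X$. Your extra general inverse formula $((f\circ\alpha^{-1})^{-1},\alpha^{-1})$ also checks out, but it is not needed.
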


\begin{proof}
First, note that if $|X| < |Y|$, then the hom-set $(G \wr \E_n)(X, Y)$ is empty because there are no onto functions from $X$ to $Y$. If $X$ and $Y$ are isomorphic, then both $(G \wr \E_n)(X, Y)$ and $(G \wr \E_n)(Y, X)$ are non-empty, which implies $|X| = |Y|$.

Conversely, if $|X| = |Y|$, we can take any invertible function $\alpha \colon X \to Y$, and we claim that $(\mathbf{1}_X, \alpha)$ is an isomorphism with inverse $(\mathbf{1}_Y, \alpha^{-1})$. Indeed,
\begin{align*}
(\mathbf{1}_Y, \alpha^{-1}) \cdot (\mathbf{1}_X, \alpha) &= ((\mathbf{1}_Y \ast \alpha) \cdot \mathbf{1}_X, \alpha^{-1}\alpha) \\
&= ((\mathbf{1}_Y \ast \alpha) \cdot \mathbf{1}_X, \id_{X}).
\end{align*}
Note that since $\im(\alpha) = Y$, we have $\mathbf{1}_Y \ast \alpha = \mathbf{1}_X$. Therefore,
\[ (\mathbf{1}_Y, \alpha^{-1}) \cdot (\mathbf{1}_X, \alpha) = (\mathbf{1}_X \cdot \mathbf{1}_X, \id_{X}) = (\mathbf{1}_X, \id_{X}), \]
which is the identity morphism of the object $X$. Likewise, since $\alpha\alpha^{-1} = \id_{Y}$, it follows that $(\mathbf{1}_X, \alpha) \cdot (\mathbf{1}_Y, \alpha^{-1}) = (\mathbf{1}_Y, \id_{Y})$.
\end{proof}

Denote by $G \wr \SE_{n}$ the full subcategory of $G \wr \E_{n}$ whose objects are the sets $[0] = \emptyset$ and  $[k] = \{1, \dots, k\}$ for $1 \leq k \leq n$. As the notation suggests, this category can be identified with the wreath product of $G$ with the category $\SE_{n}$, where the set of objects is $\{[k] \mid 0 \leq k \leq n\}$ and the morphisms are total onto functions. Following the discussion above, to determine the quiver of the original algebra, it suffices to focus our attention on the quiver of the algebra $\mathbb{C}(G \wr \SE_n)$. 

It is also convenient to describe $G \wr \SE_{n}$ using matrices. Let $\mathcal{D}_{n}$ be the category defined as follows. The set of objects of $\mathcal{D}_{n}$ is $\{[k] \mid 0 \leq k \leq n\}$. For $0 \leq k, r \leq n$, the hom-set $\mathcal{D}_{n}([k], [r])$ consists of all $r \times k$ matrices over $G \cup \{0\}$ with exactly one non-zero element in every column and at least one non-zero element in every row. Composition of morphisms is given by standard matrix multiplication, which is well-defined because each column contains only one group element.

It is straightforward to see that there is an isomorphism of categories $G \wr \SE_{n} \simeq \mathcal{D}_{n}$. To each morphism $(f, \alpha) \in (G \wr \SE_{n})([k], [r])$, we associate an $r \times k$ matrix $[f, \alpha]$ defined by:
\[ [f, \alpha]_{i,j} = \begin{cases} 
f(j) & \text{if } \alpha(j) = i, \\
0 & \text{otherwise.} 
\end{cases} \]
By construction, the $j$-th column of $[f, \alpha]$ contains a unique non-zero element in the $\alpha(j)$-th row. The condition that $\alpha$ is surjective implies that every row contains at least one non-zero element. It is routine to verify that this assignment respects composition and thus defines an isomorphism of categories.

\subsection{The quiver of a skeletal EI-category algebra}

A category is called an \textit{EI-category} if its endomorphism monoids are groups. In other words, in an EI-category, every endomorphism is an isomorphism.

The category $G \wr \SE_{n}$ is an EI-category because for any morphism $(f,\alpha) \colon [k] \to [k]$, the map $\alpha$ is a surjective map from a finite set to itself, which is necessarily a bijection. Therefore, the endomorphism monoid of an object $[k]$ is the group $G \wr S_k$.

The problem of finding the ordinary quiver of the complex algebra of a skeletal EI-category can be reduced to a problem in group representation theory. We define a few concepts below and then state the relevant theorem.

For any finite set $X$, we denote by $\mathbb{C}X$ (or $\mathbb{C}[X]$) the complex vector space consisting of all formal linear combinations of elements of $X$. If a group $G$ acts on $X$, then $\mathbb{C}X$ naturally becomes a $\mathbb{C}G$-module called a \emph{permutation module}.

Let $\mathcal{A}$ be a finite EI-category. A morphism $m \in \mathcal{A}^1$ is called \textit{irreducible} if it is not an isomorphism, and whenever $m = m_1 m_2$, either $m_1$ or $m_2$ is an isomorphism. We denote the set of irreducible morphisms from object $c$ to object $c'$ by $\Irr(\mathcal{A})(c, c')$. Recall that we denote the set of simple modules of a group $G$ by $\IRep(G)$.
\begin{thm}[{\cite[Theorem 6.13]{Margolis2012}, \cite[Theorem 4.7]{Li2011}}] \label{thm:QuiverOfEICategories}
Let $\mathcal{A}$ be a finite skeletal EI-category and let $Q$ be the quiver of $\mathbb{C}\mathcal{A}$. Then:
\begin{enumerate}
    \item The set of vertices of $Q$ is given by 
    \[
    \bigsqcup_{c \in \mathcal{A}^0} \IRep(\mathcal{A}(c,c)).
    \]
    \item The vector space $\mathbb{C}[\Irr(\mathcal{A})(c, c')]$ can be viewed as an $(\mathcal{A}(c',c') \times \mathcal{A}(c,c))$-module with the action given by $(h, g) \bullet f = h f g^{-1}$. For $V \in \IRep(\mathcal{A}(c,c))$ and $U \in \IRep(\mathcal{A}(c',c'))$, the number of arrows from $V$ to $U$ is the multiplicity of $U \otimes V^*$ as a simple constituent in $\mathbb{C}[\Irr(\mathcal{A})(c, c')]$.
\end{enumerate}
\end{thm}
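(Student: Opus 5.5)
The plan is to compute the Jacobson radical of $\mathbb{C}\mathcal{A}$ and the space $\Rad/\Rad^{2}$ explicitly, and then to use the standard description of the ordinary quiver of a finite-dimensional algebra in terms of this space.

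\textbf{Step 1: the radical.} Let $R\subseteq\mathbb{C}\mathcal{A}$ be the span of all morphisms that are not isomorphisms.
\begin{itemize}
\item \emph{$R$ is a two-sided ideal.} If $m$ is not an isomorphism and $m'm$ is defined and is an isomorphism, then $m$ has a left inverse. In a finite skeletal EI-category this forces $m$ to be an endomorphism, and then $m$ is an isomorphism, a contradiction. The same argument works on the other side.
\item \emph{$R$ is nilpotent.} Skeletality and the EI property make the relation ``there is a morphism $c\to c'$'' a partial order on $\mathcal{A}^0$. A composable chain of $N$ non-isomorphisms therefore passes through $N+1$ strictly increasing objects. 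Hence $R^{|\mathcal{A}^0|}=0$.
\item \emph{The quotient is semisimple.} We have $\mathbb{C}\mathcal{A}/R\cong\prod_{c}\mathbb{C}[\mathcal{A}(c,c)]$, and each factor is semisimple by Maschke's theorem.
\end{itemize}
It follows that $R=\Rad(\mathbb{C}\mathcal{A})$. The simple modules are then the simple modules of the groups $\mathcal{A}(c,c)$, inflated through the quotient. This gives part 1.

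\textbf{Step 2: $R/R^{2}$.} The ideal $R^{2}$ is spanned by the composites $m_1m_2$ of two non-isomorphisms, and this set of morphisms is closed under composition with isomorphisms. A non-isomorphism lies outside this set exactly when it is irreducible in the sense of the definition. Indeed, if $m=m_1m_2$ with neither factor an isomorphism, then $m$ is such a composite. Conversely, irreducibility forbids such a factorization.

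Since both $R$ and $R^{2}$ are spanned by subsets of the basis of morphisms, the quotient $R/R^{2}$ has as basis the images of the irreducible morphisms. Moreover, the set $\Irr(\mathcal{A})(c,c')$ is stable under pre- and post-composition by automorphisms. Hence
\[
\mathrm{id}_{c'}\,(R/R^{2})\,\mathrm{id}_{c}\cong\mathbb{C}[\Irr(\mathcal{A})(c,c')]
\]
as an $\mathbb{C}[\mathcal{A}(c',c')]$–$\mathbb{C}[\mathcal{A}(c,c)]$-bimodule.

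\textbf{Step 3: counting arrows.} For a finite-dimensional algebra $A$ with semisimple quotient $A/\Rad A=\prod_c B_c$, we use the standard fact that
\[
\Ext^1(V,U)\cong\Hom_{B_{c'}\otimes B_c^{op}}\bigl(\Rad A/\Rad^{2}A,\;U\otimes V^{\ast}\bigr)
\]
for simple modules $V$ (over $B_c$) and $U$ (over $B_{c'}$). Equivalently, $\dim\Ext^1(V,U)$ is the multiplicity of the simple bimodule $U\otimes V^{\ast}$ in $\Rad A/\Rad^{2}A$.

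A $\mathbb{C}H$–$\mathbb{C}K$-bimodule is the same thing as an $H\times K$-module under the action $(h,g)\bullet f=hfg^{-1}$. Under this dictionary, the simple bimodules are exactly the modules $U\boxtimes V^{\ast}$, which the theorem writes as $U\otimes V^{\ast}$. Combining this with Step 2 gives part 2.

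\textbf{Main obstacle.} The only delicate point is getting the direction conventions right in the $\Ext^1$ formula, namely whether $V\to U$ corresponds to $\mathrm{id}_{c'}(R/R^2)\mathrm{id}_c$ or to the transpose. I would settle this directly. Consider a nonsplit extension $0\to U\to E\to V\to 0$. The action of $\Rad$ on $E$ induces a nonzero bimodule map $R/R^{2}\otimes V\to U$. This yields the identification above; conversely, every such bimodule map yields an extension by the usual construction.
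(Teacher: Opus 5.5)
Your proof is correct. The paper gives no proof of this theorem; it quotes it from Margolis--Steinberg and Li. Your route is essentially the standard argument of those sources:
\begin{itemize}
\item the radical is spanned by the non-isomorphisms (the paper itself quotes this later as Li's Proposition 4.6);
\item $\Rad/\Rad^{2}$ has the irreducible morphisms as a basis;
\item the number of arrows is read off from the $A/\Rad A$-bimodule description of $\Ext^1$ between simples.
\end{itemize}
One point to tighten: in Step 3 the $\Hom$ should be taken on the corner $\mathrm{id}_{c'}(R/R^{2})\mathrm{id}_{c}$, and the identification of $\dim \Ext^1(V,U)$ with a multiplicity uses that the simple bimodule $U\boxtimes V^{\ast}$ has endomorphism ring $\mathbb{C}$.
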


In view of the theorem above, the vertices of the quiver $Q$ of $\mathbb{C}(G \wr \SE_n)$ are indexed by the simple modules of the automorphism groups $G \wr S_k$ for each object $[k]$ in $G \wr \SE_n$. Since $G$ has $l$ conjugacy classes, these modules are indexed by multipartitions (or equivalently, multi-Young diagrams) $\Lambda$ with $k$ boxes and $l$ components, where $k$ varies from $0$ to $n$.

\subsection{Irreducible morphisms}
The next step for using \Cref{thm:QuiverOfEICategories} is identifying the irreducible morphisms of $G \wr \SE_{n}$.

\begin{lem} \label[lemma]{lem:IrreducibleMorphisms}
The irreducible morphisms of $G \wr \SE_{n}$ are precisely the morphisms from $[k+1]$ to $[k]$ for $0 \leq k < n$. In other words,
\[
\Irr(G \wr \SE_{n})([p],[k]) = 
\begin{cases} 
G \wr \SE_{n}([p],[k]) & \text{if } p = k+1, \\
\emptyset & \text{otherwise.}
\end{cases}
\]
\end{lem}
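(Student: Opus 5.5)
The plan is to work with the underlying onto maps: a morphism $(f,\alpha)\colon[p]\to[k]$ of $G\wr\SE_n$ is an isomorphism exactly when $\alpha$ is a bijection, i.e.\ when $p=k$. This holds because if $\alpha$ is bijective then $(f,\alpha)$ has inverse $((f\ast\alpha^{-1})^{-1},\alpha^{-1})$, the wreath inverse formula in the groupoid setting; conversely, an isomorphism in the category forces its second coordinate to be invertible in $\SE_n$. Since hom-sets $[p]\to[k]$ are nonempty only when $p\geq k$, the non-isomorphisms are exactly the morphisms with $p>k$. I also record that composition multiplies second coordinates, $(f',\alpha')(f,\alpha)=(\cdot,\alpha'\alpha)$.

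First I show that every morphism $[k+1]\to[k]$ is irreducible. It is not an isomorphism. Suppose $(f,\alpha)=m_1m_2$ with $m_2\colon[k+1]\to[r]$ and $m_1\colon[r]\to[k]$. Surjectivity forces $k+1\geq r\geq k$. Hence $r=k+1$, in which case $m_2$ is an endomorphism and so an isomorphism, or $r=k$, in which case $m_1$ is an isomorphism. This case is immediate.

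Next I show that morphisms $[p]\to[k]$ with $p\geq k+2$ are not irreducible; isomorphisms are excluded by definition. Given an onto $\alpha\colon[p]\to[k]$ with $p>k+1$, some fiber has at least two elements. I pick $a\neq b$ with $\alpha(a)=\alpha(b)$ and factor $\alpha=\beta\gamma$. Here $\gamma\colon[p]\to Z$ identifies only $a$ and $b$, with $|Z|=p-1$, and $\beta\colon Z\to[k]$ is the induced onto map. After relabeling $Z$ as $[p-1]$ through a bijection, both factors are morphisms of $\SE_n$. Since $p-1\neq p$ and $p-1\neq k$, neither factor is an isomorphism. To lift this factorization to $G\wr\SE_n$, I take $(f,\alpha)=(\mathbf{1}_{[p-1]},\beta)\cdot(f,\gamma)$ and check with the composition formula: $(\mathbf{1}\ast\gamma)\cdot f=f$, and the second coordinate is $\beta\gamma=\alpha$.

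The main, and still mild, obstacle is carrying the group coordinate correctly through the factorization in the last step. Putting all of $f$ on the right-hand factor and the constant $\mathbf{1}$ on the left avoids any inverse computations. The argument finishes by noting that $\mathbf{1}_{[p-1]}$ is total on $[p-1]$, so $(\mathbf{1}_{[p-1]},\beta)$ is a valid morphism.
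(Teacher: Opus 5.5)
Your proof is correct and has the same structure as the paper's. Morphisms $[k+1]\to[k]$ are irreducible because any factorization passes through an endomorphism. Longer morphisms are factored with all the group data on the right factor, exactly as in the paper's $(\mathbf{1}_{[k+1]},\alpha_{1})\cdot(f,\alpha_{2})=(f,\alpha)$. The only difference is that you factor $\alpha$ through $[p-1]$ by collapsing two points of a fiber, justified directly by pigeonhole, whereas the paper factors through $[k+1]$ and cites \cite[Lemma 3.3]{Stein2016}. Any intermediate object of size strictly between $k$ and $p$ works, so your version is simply self-contained.
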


\begin{proof}
It is clear that every morphism $(f,\alpha)$ from $[k+1]$ to $[k]$ is irreducible. Indeed, if one decomposes $(f,\alpha) = (f_{1},\alpha_{1}) \cdot (f_{2},\alpha_{2})$, the size of the sets implies that one of the factors must be an endomorphism and hence an isomorphism. 

Now, if $(f,\alpha) \in G \wr \SE_{n}([p],[k])$ where $p > k+1$, then it is known that we can write $\alpha = \alpha_{1}\alpha_{2}$ for some onto functions $\alpha_{2} \colon [p] \to [k+1]$ and $\alpha_{1} \colon [k+1] \to [k]$ (see \cite[Lemma 3.3]{Stein2016}). In this case, $(\mathbf{1}_{[k+1]},\alpha_{1})$ and $(f,\alpha_{2})$ are both well-defined morphisms which are not endomorphisms and therefore are not invertible. Finally, 
\[
(\mathbf{1}_{[k+1]},\alpha_{1}) \cdot (f,\alpha_{2}) = (\mathbf{1}_{[k+1]}\ast\alpha_{2} \cdot f, \alpha_{1}\alpha_{2}) = (\mathbf{1}_{[p]} \cdot f, \alpha_{1}\alpha_{2}) = (f, \alpha),
\]
so $(f,\alpha)$ is not irreducible.
\end{proof}

Let $V \in \IRep(G \wr S_{p})$ and $U \in \IRep(G \wr S_{k})$. If $p \neq k+1$, there are no arrows in the quiver of $\mathbb{C}(G \wr \SE_{n})$ from $V$ to $U$ because $\Irr(G \wr \SE_{n})([p],[k])$ is empty. Consequently, we focus on the case $p = k+1$ and examine the module $\mathbb{C}[\Irr(G \wr \SE_{n})([k+1], [k])]$ under the action of $G \wr S_{k} \times G \wr S_{k+1}$ as described in \Cref{thm:QuiverOfEICategories}.

For convenience, we denote the set of irreducible morphisms by
\[
X = \Irr(G \wr \SE_{n})([k+1], [k]) = \{ (f, \alpha) \mid \alpha \colon [k+1] \to [k] \text{ is onto}, \, f \colon [k+1] \to G \}.
\]
The group $G \wr S_{k} \times G \wr S_{k+1}$ acts on $X$ via
\[
((f_{1}, \sigma_{1}), (f_{2}, \sigma_{2})) \bullet (f, \alpha) = (f_{1}, \sigma_{1}) \cdot (f, \alpha) \cdot (f_{2}, \sigma_{2})^{-1},
\]
where $f_{1} \colon [k] \to G$, $f_{2} \colon [k+1] \to G$, $\sigma_{1} \in S_{k}$, and $\sigma_{2} \in S_{k+1}$. The module of interest is the linearization $\mathbb{C}X$, which is a permutation module for this action.

\subsection{Description of the action and stabilizer}

\begin{lem} \label[lemma]{lem:TransitiveAction}
The action of $G \wr S_{k} \times G \wr S_{k+1}$ on $X$ is transitive.
\end{lem}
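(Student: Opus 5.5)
To prove transitivity, I will show that every element $(f,\alpha)\in X$ lies in the orbit of one fixed base element. For the base element I take $(\mathbf{1}_{[k+1]},\alpha_0)$, where $\alpha_0\colon[k+1]\to[k]$ is given by $\alpha_0(j)=j$ for $j\leq k$ and $\alpha_0(k+1)=k$. The argument has two steps. First, the $G$-part is absorbed using the right factor. Then the function $\alpha$ is normalized by permutations on both sides.

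\textbf{Step 1 (removing $f$).} Given $(f,\alpha)\in X$, I act by the pair $(\mathbf{1}_{[k]},\id)\in G\wr S_k$ and $(f^{-1},\id_{[k+1]})\in G\wr S_{k+1}$. The inverse of $(f^{-1},\id)$ in $G\wr S_{k+1}$ is $(f,\id)$, by the inverse formula $(f,m)^{-1}=((f\ast m^{-1})^{-1},m^{-1})$. This gives
\[
(\mathbf{1}_{[k]},\id)\cdot(f,\alpha)\cdot(f,\id)^{-1}=(\mathbf{1}_{[k]},\id)\cdot(f,\alpha)\cdot(f^{-1},\id).
\]
By the composition rule, this equals $((f\ast\id)\cdot f^{-1},\alpha)=(\mathbf{1}_{[k+1]},\alpha)$, once we also note that $\mathbf{1}_{[k]}\ast\alpha=\mathbf{1}_{[k+1]}$. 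So every element of $X$ lies in the orbit of some $(\mathbf{1}_{[k+1]},\alpha)$.

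\textbf{Step 2 (normalizing $\alpha$).} Since $\alpha\colon[k+1]\to[k]$ is onto, exactly one point $y\in[k]$ has two preimages $a<b$, and every other point has exactly one. I choose $\tau\in S_{k+1}$ that sends $a\mapsto k$, $b\mapsto k+1$, and the remaining $k-1$ points bijectively onto $[k-1]$. I then define $\sigma\in S_k$ by $\sigma(\alpha(j))=\alpha_0(\tau(j))$. This $\sigma$ is well defined and bijective because $\alpha$ and $\alpha_0\tau$ have the same fibre structure, so $\sigma\alpha=\alpha_0\tau$, that is, $\sigma\alpha\tau^{-1}=\alpha_0$. Acting by $((\mathbf{1}_{[k]},\sigma),(\mathbf{1}_{[k+1]},\tau))$, and using that all the group coordinates are trivial (so $(\mathbf{1},\tau)^{-1}=(\mathbf{1},\tau^{-1})$), the product works out to $(\mathbf{1}_{[k+1]},\sigma\alpha\tau^{-1})=(\mathbf{1}_{[k+1]},\alpha_0)$.

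The only point that needs care is the bookkeeping of the wreath product multiplication and inverse in Step 1, namely checking that the composite $G$-coordinate really is trivial. Under the matrix description $\mathcal{D}_n$ this is transparent. Right multiplication by the diagonal matrix with entries $f(j)^{-1}$ scales column $j$ of $[f,\alpha]$ by $f(j)^{-1}$, which turns every nonzero entry into $1_G$. Permutation matrices on the left and right then permute rows and columns to reach $[\mathbf{1},\alpha_0]$. I would present the argument in this matrix form to keep the verification routine.
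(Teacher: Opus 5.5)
Your proof is correct and is essentially the paper's argument, with the same base point $(\mathbf{1}_{[k+1]},\dec)$ (your $\alpha_0$). The paper folds your two steps into the single element $((\mathbf{1}_{[k]},\sigma_1),(f,\sigma_2)^{-1})$ and cites the transitivity of $S_k\times S_{k+1}$ on surjections, where you prove it directly via the fibre structure.

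One small slip in Step 1: the element you act by should be $(f,\id_{[k+1]})$, not $(f^{-1},\id_{[k+1]})$. Acting by the latter right-multiplies by $(f,\id)$ and gives $(f\cdot f,\alpha)$, but your displayed computation is the correct one, so the conclusion is unaffected.
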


\begin{proof}
We show that every element of $X$ lies in the orbit of $(\mathbf{1}_{[k+1]}, \dec)$, where $\dec \colon [k+1] \to [k]$ is defined by 
\[
\dec(i) = 
\begin{cases} 
i & \text{if } i \leq k, \\
k & \text{if } i = k+1.
\end{cases}
\]

Let $(f, \alpha) \in X$. It is easy to verify (also mentioned in \cite[Section 3]{Stein2016}) that the action of $S_{k} \times S_{k+1}$ on the set of surjective maps from $[k+1]$ to $[k]$ is transitive. Thus, there exist $\sigma_{1} \in S_{k}$ and $\sigma_{2} \in S_{k+1}$ such that $\sigma_{1} \dec \sigma_{2} = \alpha$. 

Taking $(\mathbf{1}_{[k]}, \sigma_{1}) \in G \wr S_{k}$ and $(f, \sigma_{2})^{-1} \in G \wr S_{k+1}$, we observe that:
\begin{align*}
(\mathbf{1}_{[k]}, \sigma_{1}) \cdot (\mathbf{1}_{[k+1]}, \dec) \cdot \left((f, \sigma_{2})^{-1}\right)^{-1} 
    &= (\mathbf{1}_{[k]} \ast\dec \cdot \mathbf{1}_{[k+1]}, \sigma_{1} \dec) \cdot (f, \sigma_{2}) \\
    &= (\mathbf{1}_{[k+1]}, \sigma_{1} \dec) \cdot (f, \sigma_{2}) \\
    &= (\mathbf{1}_{[k+1]} \ast\sigma_{2} \cdot f, \sigma_{1} \dec \sigma_{2}) \\
    &= (\mathbf{1}_{[k+1]} \cdot f, \sigma_{1} \dec \sigma_{2}) \\
    &= (f, \alpha).
\end{align*}
Thus, the action is indeed transitive.
\end{proof}

Let $K$ be the stabilizer of $(\mathbf{1}_{[k+1]}, \dec)$ under the action of $G \wr S_{k} \times G \wr S_{k+1}$ described above, and let $\tr_{K}$ denote the trivial module of $K$. Since $\mathbb{C}X$ is a permutation module arising from a transitive group action, it follows that 
\[
\mathbb{C}X \simeq \Ind_{K}^{G \wr S_{k} \times G \wr S_{k+1}} (\tr_{K}).
\]
To proceed, we must characterize the stabilizer $K$ more explicitly.

For the action of $S_{k} \times S_{k+1}$ on the set of surjective maps from $[k+1]$ to $[k]$, it is known that the stabilizer of $\dec$ is the subgroup
\[
\{(\sigma, \sigma\tau) \mid \sigma \in S_{k-1}, \, \tau \in S_{\{k, k+1\}}\}
\]
(see \cite[Lemma 3.5]{Stein2016}). For any function $f \in G^{[k]}$, let $\hat{f} \in G^{[k+1]}$ be the function defined by
\[
\hat{f}(i) = 
\begin{cases} 
f(i) & \text{if } i \leq k, \\
f(k) & \text{if } i = k+1.
\end{cases}
\]
Note, in particular, that $\hat{f}(k+1) = \hat{f}(k)$.

Recall that we can view any element $\sigma \in S_{k-1}$ as an element of $S_k$ that fixes $k$.

\begin{lem} \label[lemma]{lem:StabilizerDescription}
The stabilizer $K$ of $(\mathbf{1}_{[k+1]}, \dec)$ under the action of $G \wr S_{k} \times G \wr S_{k+1}$ is given explicitly by
\[
K = \{ ((f, \sigma), (\hat{f}, \sigma\tau)) \mid \sigma \in S_{k-1}, \, \tau \in S_{\{k, k+1\}}, \, f \in G^{[k]} \}.
\]
\end{lem}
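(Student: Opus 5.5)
We compare the underlying permutations first and then the function components. Rewrite the stabilizer condition $((f_1,\sigma_1),(f_2,\sigma_2))\bullet(\mathbf{1}_{[k+1]},\dec)=(\mathbf{1}_{[k+1]},\dec)$ in the equivalent form
\[
(f_1,\sigma_1)\cdot(\mathbf{1}_{[k+1]},\dec)=(\mathbf{1}_{[k+1]},\dec)\cdot(f_2,\sigma_2),
\]
which avoids computing inverses in $G\wr S_{k+1}$. Using the composition rule, the left side equals $((f_1\ast\dec)\cdot\mathbf{1}_{[k+1]},\sigma_1\dec)=(f_1\circ\dec,\sigma_1\dec)$. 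The right side equals $((\mathbf{1}_{[k+1]}\ast\sigma_2)\cdot f_2,\dec\,\sigma_2)=(f_2,\dec\,\sigma_2)$.

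Equating the second coordinates gives $\sigma_1\dec=\dec\,\sigma_2$, i.e.\ $\sigma_1\dec\,\sigma_2^{-1}=\dec$. So $(\sigma_1,\sigma_2)$ lies in the stabilizer of $\dec$ under the action of $S_k\times S_{k+1}$. By \cite[Lemma 3.5]{Stein2016}, this means $\sigma_1=\sigma\in S_{k-1}$ and $\sigma_2=\sigma\tau$ with $\tau\in S_{\{k,k+1\}}$. Any inversion conventions only permute the description within this subgroup, since it is closed under inverses. One should double-check this against the convention in \cite{Stein2016}: because $\dec\,\tau=\dec$ and $\sigma$ commutes with $\dec$ for $\sigma\in S_{k-1}$, the identity $\sigma\dec=\dec\,\sigma\tau$ can also be verified directly.

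Equating the first coordinates gives $f_2=f_1\circ\dec$, which is exactly $f_2=\hat{f_1}$ by the definition of $\hat f$. The function $f_1\in G^{[k]}$ is unconstrained. Together these show that $K$ is contained in the displayed set. Conversely, for any $f\in G^{[k]}$, $\sigma\in S_{k-1}$ and $\tau\in S_{\{k,k+1\}}$, the same two computations with $f_1=f$, $\sigma_1=\sigma$, $f_2=\hat f$, $\sigma_2=\sigma\tau$ show that both sides agree. This gives the reverse inclusion.

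The main obstacle is purely bookkeeping: making sure the composition rule and the $\ast$ action are applied in the correct order. In particular, the $G$-component on the right side must be $f_2$ itself and not $f_2$ twisted by $\sigma_2$. This holds because $\mathbf{1}_{[k+1]}\ast\sigma_2=\mathbf{1}_{[k+1]}$. Once that is settled, the lemma reduces entirely to the known symmetric-group stabilizer.
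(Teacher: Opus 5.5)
Your proof is correct and follows the paper's argument essentially step by step. You rewrite the stabilizer condition as $(f_1,\sigma_1)\cdot(\mathbf{1}_{[k+1]},\dec)=(\mathbf{1}_{[k+1]},\dec)\cdot(f_2,\sigma_2)$, reduce the permutation part $\sigma_1\dec=\dec\,\sigma_2$ to the known stabilizer of $\dec$ from \cite{Stein2016}, and read off $f_2=f_1\ast\dec=\hat{f_1}$ from the $G$-component.

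Your hedge about inversion conventions is unnecessary, and it is not quite accurate, since inverting only one coordinate does not preserve $\{(\sigma,\sigma\tau)\}$. What you actually need is the set of pairs satisfying $\sigma_1\dec=\dec\,\sigma_2$, and this is easy to check directly in both directions. For the forward direction, the unique two-point fiber forces $\sigma_1(k)=k$ and $\sigma_2\{k,k+1\}=\{k,k+1\}$, after which $\sigma_1$ and $\sigma_2$ agree on $[k-1]$.
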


\begin{proof}
An element $((f, \sigma), (h, \epsilon))$ is in $K$ if and only if 
\[
(f, \sigma) \cdot (\mathbf{1}_{[k+1]}, \dec) \cdot (h, \epsilon)^{-1} = (\mathbf{1}_{[k+1]}, \dec),
\]
or equivalently,
\[
(f, \sigma) \cdot (\mathbf{1}_{[k+1]}, \dec) = (\mathbf{1}_{[k+1]}, \dec) \cdot (h, \epsilon).
\]
Applying the product rule for the wreath product on both sides, we obtain
\[
(f \ast \dec \cdot \mathbf{1}_{[k+1]}, \sigma \dec) = (\mathbf{1}_{[k+1]} \ast \epsilon \cdot h, \dec \epsilon).
\]
Since $\mathbf{1}_{[k+1]}$ is the identity for the pointwise product, this simplifies to the condition
\[
(f \ast \dec, \sigma \dec) = (h, \dec \epsilon).
\]
The map equality $\sigma \dec = \dec \epsilon$ implies that $(\sigma, \epsilon)$ is in the stabilizer of $\dec$ under the action of $S_{k} \times S_{k+1}$. It follows that $\sigma \in S_{k-1}$ and $\epsilon = \sigma \tau$ for some $\tau \in S_{\{k, k+1\}}$.

Finally, the function equality $f \ast \dec = h$ is equivalent to $h = \hat{f}$. Indeed, for $i \leq k$, we have $h(i) = f(\dec(i)) = f(i)$, and for $i = k+1$, we have $h(k+1) = f(\dec(k+1)) = f(k)$. This completes the proof.
\end{proof}

For any $f \in G^{[k-1]}$ and $g \in G$, we define a function $f^g \in G^{[k]}$ by 
\[
f^g(i) = \begin{cases} 
f(i) & i \leq k-1 \\ 
g & i = k
\end{cases}.
\]
Note that every $h \in G^{[k]}$ can be uniquely written as $h = f^g$ for some $f \in G^{[k-1]}$ and $g \in G$. 

In what follows, it will be useful to understand how elements of $K$ correspond to matrices over $G\cup \{0\}$. Given an element $(f^g, \sigma)$ with $\sigma \in S_{k-1}$, its associated matrix consists of an $(k-1) \times (k-1)$ block $A$ associated with $(f, \sigma)$ and a $1 \times 1$ block containing $g$. Thus, $(f^g, \sigma)$ corresponds to a block-diagonal matrix:
\[
A \oplus (g) = \begin{pmatrix}
\text{\Large $A$} & 0 \\
0 & g
\end{pmatrix}.
\]
For the element $(\widehat{f^g}, \sigma\tau)$, the associated matrix similarly decomposes into two blocks of sizes $(k-1) \times (k-1)$ and $2 \times 2$. The first block is $A$, and the second is the $2 \times 2$ block $g P_\tau$, where $P_\tau$ is the permutation matrix associated with $\tau$:
\[
\begin{pmatrix}
\text{\Large $A$} & \mathbf{0} \\
\mathbf{0} & g P_\tau
\end{pmatrix}.
\]

\begin{lem}
There is an isomorphism of groups
\[
K \simeq (G \wr S_{k-1}) \times G \times S_{2}.
\]
\end{lem}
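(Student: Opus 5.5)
We will write down an explicit map and check that it is a bijective homomorphism. By \Cref{lem:StabilizerDescription}, every element of $K$ has the form $((f^g,\sigma),(\widehat{f^g},\sigma\tau))$. Here $f\in G^{[k-1]}$, $g\in G$, $\sigma\in S_{k-1}$ and $\tau\in S_{\{k,k+1\}}$, and we use the unique decomposition $h=f^g$ of functions in $G^{[k]}$. Identify $S_{\{k,k+1\}}$ with $S_2$. We define
\[
\Phi\colon (G\wr S_{k-1})\times G\times S_2\to K,\qquad \Phi((f,\sigma),g,\tau)=((f^g,\sigma),(\widehat{f^g},\sigma\tau)).
\]
\Cref{lem:StabilizerDescription} shows that $\Phi$ is surjective. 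It is injective because the triple $(f,\sigma,g)$ is read off from the first coordinate $(f^g,\sigma)$. The permutation $\tau$ is then recovered as $\sigma^{-1}$ times the second permutation; this uses that $\sigma\in S_{k-1}$ and $\tau$ commute, being disjoint permutations.

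The main step is to show that $\Phi$ is a homomorphism. Using the block-matrix description from just before the lemma is cleaner than manipulating $\ast$ and $\hat{\ }$ directly. Under the isomorphism with $\mathcal{D}_n$, the image $\Phi((f,\sigma),g,\tau)$ is the pair of block-diagonal matrices
\[
\left(A\oplus(g),\ A\oplus gP_\tau\right),
\]
where $A$ is the $(k-1)\times(k-1)$ monomial matrix of $(f,\sigma)$.

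Multiplication in $K$ is componentwise, since $K$ is a subgroup of the direct product $G\wr S_k\times G\wr S_{k+1}$, and block-diagonal matrices multiply blockwise. We use that $g$ is a scalar from $G$, so it commutes with the $0/1$ permutation matrix $P_\tau$. Then the product of the images of $((f_1,\sigma_1),g_1,\tau_1)$ and $((f_2,\sigma_2),g_2,\tau_2)$ is
\[
\left(A_1A_2\oplus(g_1g_2),\ A_1A_2\oplus g_1g_2P_{\tau_1}P_{\tau_2}\right).
\]
This is exactly $\Phi$ of the product $((f_1,\sigma_1)(f_2,\sigma_2),\,g_1g_2,\,\tau_1\tau_2)$. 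Here we use that $P_{\tau_1\tau_2}=P_{\tau_1}P_{\tau_2}$ and that $A_1A_2$ is the matrix of $(f_1,\sigma_1)(f_2,\sigma_2)$, the latter because matrix correspondence is a homomorphism.

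The one point requiring care is the identification of $(\widehat{f^g},\sigma\tau)$ with the block matrix $A\oplus gP_\tau$, in particular that both columns $k$ and $k+1$ carry the same entry $g$. This follows from $\widehat{f^g}(k)=\widehat{f^g}(k+1)=g$ and the definition $[f,\alpha]_{i,j}=f(j)$ when $\alpha(j)=i$, with $\sigma\tau$ acting by $\tau$ on $\{k,k+1\}$. Given this identification, $\Phi$ is a group isomorphism.
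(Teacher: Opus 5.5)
Your proof is correct and follows the paper's: the paper uses the same map $\psi((f,\sigma),g,\tau)=((f^g,\sigma),(\widehat{f^g},\sigma\tau))$, gets bijectivity from \Cref{lem:StabilizerDescription}, and then checks that $\psi$ is multiplicative. The only difference is in that last check. You verify the homomorphism property blockwise in the matrix model, which the paper sets up just before the lemma but does not use in its proof; the paper instead checks pointwise that $(f_1\ast\sigma_2\cdot f_2)^{g_1g_2}=f_1^{g_1}\ast\sigma_2\cdot f_2^{g_2}$, the analogous identity for the hatted functions, and $\sigma_1\sigma_2\tau_1\tau_2=\sigma_1\tau_1\sigma_2\tau_2$, so your version is shorter but relies on the isomorphism with $\mathcal{D}_n$ that the paper asserts without a detailed proof.
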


\begin{proof}
Identify $S_{2}$ with $S_{\{k, k+1\}}$. We define a map $\psi: (G \wr S_{k-1}) \times G \times S_{2} \to K$ by 
\[
\psi((f, \sigma), g, \tau) = \left( (f^g, \sigma), (\widehat{f^g}, \sigma\tau) \right).
\]
By \Cref{lem:StabilizerDescription}, it is clear that $\psi$ is a bijection. It remains to show that $\psi$ is a group homomorphism.
Indeed,

\begin{align*}
\psi(((f_{1},\sigma_{1}),g_{1},\tau_{1})\cdot((f_{2},\sigma_{2}),g_{2},\tau_{2})) & =\psi(((f_{1},\sigma_{1})\cdot(f_{2},\sigma_{2})),g_{1}g_{2},\tau_{1}\tau_{2})\\
 & =\psi((f_{1}\ast\sigma_{2}\cdot f_{2},\sigma_{1}\sigma_{2}),g_{1}g_{2},\tau_{1}\tau_{2})\\
 & =(((f_{1}\ast\sigma_{2}\cdot f_{2})^{g_{1}g_{2}},\sigma_{1}\sigma_{2}),(\widehat{(f_{1}\ast\sigma_{2}\cdot f_{2})^{g_{1}g_{2}}},\sigma_{1}\sigma_{2}\tau_{1}\tau_{2}))
\end{align*}
On the other hand
\begin{align*}
\psi((f_{1},\sigma_{1}),g_{1},\tau_{1})\cdot\psi((f_{2},\sigma_{2}),g_{2},\tau_{2}) & =((f_{1}^{g_{1}},\sigma_{1}),(\widehat{f_{1}^{g_{1}}},\sigma_{1}\tau_{1}))\cdot((f_{2}^{g_{2}},\sigma_{2}),(\widehat{f_{2}^{g_{2}}},\sigma_{2}\tau_{2}))\\
 & =((f_{1}^{g_{1}}\ast\sigma_{2}\cdot f_{2}^{g_{2}},\sigma_{1}\sigma_{2}),(\widehat{f_{1}^{g_{1}}}\ast\sigma_{2}\tau_{2}\cdot\widehat{f_{2}^{g_{2}}},\sigma_{1}\tau_{1}\sigma_{2}\tau_{2})).
\end{align*}

First note that 
\[
\sigma_{1}\sigma_{2}\tau_{1}\tau_{2} = \sigma_{1}\tau_{1}\sigma_{2}\tau_{2}
\]
because $\tau_{1}$ and $\sigma_{2}$ have disjoint supports and thus commute as elements of $S_{k+1}$.
Next, for $i \leq k-1$, we have
\[
(f_{1}\ast\sigma_{2}\cdot f_{2})^{g_{1}g_{2}}(i) = f_{1}(\sigma_{2}(i)) \cdot f_{2}(i) = f_{1}^{g_{1}}(\sigma_{2}(i)) \cdot f_{2}^{g_{2}}(i)
\]
because $\sigma_{2}(i) \leq k-1$ as well. For $i=k$, we have 
\[
(f_{1}\ast\sigma_{2}\cdot f_{2})^{g_{1}g_{2}}(k) = g_{1}g_{2} = f_{1}^{g_{1}}(k) \cdot f_{2}^{g_{2}}(k) = f_{1}^{g_{1}}(\sigma_{2}(k)) \cdot f_{2}^{g_{2}}(k)
\]
since $\sigma_2$ fixes $k$. This establishes that
\[
(f_{1}\ast\sigma_{2}\cdot f_{2})^{g_{1}g_{2}} = f_{1}^{g_{1}}\ast\sigma_{2} \cdot f_{2}^{g_{2}}.
\]
Proving the last equality is similar. For $i \leq k-1$, we have again 
\[
\widehat{(f_{1}\ast\sigma_{2}\cdot f_{2})^{g_{1}g_{2}}}(i) = f_{1}(\sigma_{2}(i)) \cdot f_{2}(i)
\]
and 
\begin{align*}
\left(\widehat{f_{1}^{g_{1}}}\ast\sigma_{2}\tau_{2} \cdot \widehat{f_{2}^{g_{2}}}\right)(i) 
& = \widehat{f_{1}^{g_{1}}}(\sigma_{2}\tau_{2}(i)) \cdot \widehat{f_{2}^{g_{2}}}(i) \\
& = f_{1}(\sigma_{2}(i)) \cdot f_{2}(i)
\end{align*}
because $\tau_{2}(i) = i$ for all $i \leq k-1$.
Finally, for $k \leq i \leq k+1$, we have
\[
\widehat{(f_{1}\ast\sigma_{2} \cdot f_{2})^{g_{1}g_{2}}}(i) = (f_{1}\ast\sigma_{2} \cdot f_{2})^{g_{1}g_{2}}(k) = g_{1}g_{2}
\]
and
\[
\left(\widehat{f_{1}^{g_{1}}}\ast\sigma_{2}\tau_{2} \cdot \widehat{f_{2}^{g_{2}}}\right)(i) = \widehat{f_{1}^{g_{1}}}(\sigma_{2}\tau_{2}(i)) \cdot \widehat{f_{2}^{g_{2}}}(i) = g_{1}g_{2},
\]
where the last equality follows because $\sigma_2$ fixes $\{k, k+1\}$ and $\widehat{f_j^{g_j}}$ takes the value $g_j$ on both $k$ and $k+1$. This completes the proof.
\end{proof}

\subsection{The quiver computation}
Let $U \in \IRep G \wr S_{k}$ and $V \in \IRep G \wr S_{k+1}$. To simplify the notation, we shall use the same notation for a module
and its character. For example, we write $U$ and $V$ in place of $\chi_{U}$ and $\chi_{V}$.

The number of arrows in the quiver of $\mathbb{C}(G \wr \SE_{n})$ from $V$ to $U$ is the multiplicity of $U \otimes V^{\ast}$ as a simple module in the $(G \wr S_{k} \times G \wr S_{k+1})$-module 
\[
\mathbb{C}X = \Ind_{K}^{G \wr S_{k} \times G \wr S_{k+1}} \tr_{K}.
\]
This multiplicity can be expressed as the inner product of characters:
\[
\langle U \otimes V^{\ast}, \Ind_{K}^{G \wr S_{k} \times G \wr S_{k+1}} \tr_{K} \rangle.
\]
Using Frobenius reciprocity, this equals 
\begin{align*}
\langle\Res_{K}^{G\wr S_{k}\times G\wr S_{k+1}}(U\otimes V^{\ast}),\tr_{K}\rangle. 
\end{align*}
According to \Cref{lem:StabilizerDescription}, a general element of $K$ is of the form \[((f^g, \sigma), (\hat{f^g}, \sigma\tau))\]
where $\sigma \in S_{k-1}, \, \tau \in S_{\{k, k+1\}}, \, f \in G^{[k-1]},\, g\in G$. Therefore,  
\begin{align*}
\MoveEqLeft \langle\Res_{K}^{G\wr S_{k}\times G\wr S_{k+1}}(U\otimes V^{\ast}), \tr_{K}\rangle \\
&= \frac{1}{|K|}\sum_{\substack{f\in G^{[k-1]}, g\in G \\ \sigma \in S_{k-1}, \tau \in S_2}} U\otimes V^\ast (((f^g, \sigma), (\hat{f^g}, \sigma\tau))) \tr_K(((f^g, \sigma), (\hat{f^g}, \sigma\tau))) \\
&= \frac{1}{|K|}\sum_{\substack{f\in G^{[k-1]}, g\in G \\ \sigma \in S_{k-1}, \tau \in S_2}} U((f^g, \sigma))\cdot V^\ast ((\hat{f^g}, \sigma\tau)) \\
&= \frac{1}{|K|}\sum_{\substack{f\in G^{[k-1]}, g\in G \\ \sigma \in S_{k-1}, \tau \in S_2}} U((f^g, \sigma))\cdot \overline{V ((\hat{f^g}, \sigma\tau))}.
\end{align*}
Now, if we think of $(\hat{f^g}, \sigma\tau)$ as a $(k-1)\times (k-1)$ and $2\times 2$ block matrix, it is just a general element of $G\wr S_{k-1}\times G\times S_2$ so we can write
\[
V ((\hat{f^g}, \sigma\tau))=\Res^{G\wr S_{k+1}}_{G\wr S_{k-1} \times (G\times S_2)}V ((\hat{f^g}, \sigma\tau))
\]
Likewise, if we view $(f^g, \sigma)$ as a $(k-1)\times (k-1)$ and $1\times 1$ block matrix, it is a general element of $G\wr S_{k-1}\times G$ so
\[
U ((f^g, \sigma))=\Res^{G\wr S_{k}}_{G\wr S_{k-1} \times G}U ((f^g, \sigma)).
\]
In order to view this also as a $K=(G\wr S_{k-1}\times(G\times S_2))$ module we will write this as
\[
\Res^{G\wr S_{k}}_{G\wr S_{k-1} \times G}U ((f^g, \sigma))\cdot \tr_2(\tau).
\]
where $\tr_2$ is the trivial module of $S_2$. Therefore, we obtain
\begin{align*}
\MoveEqLeft \langle\Res_{K}^{G\wr S_{k}\times G\wr S_{k+1}}(U\otimes V^{\ast}), \tr_{K}\rangle \\
&= \frac{1}{|K|}\sum_{\substack{f\in G^{[k-1]}, g\in G \\ \sigma \in S_{k-1}, \tau \in S_2}} U((f^g, \sigma)) \cdot \overline{V ((\hat{f^g}, \sigma\tau))} \\
&= \frac{1}{|K|}\sum_{\substack{f\in G^{[k-1]}, g\in G \\ \sigma \in S_{k-1}, \tau \in S_2}} \left(\Res^{G\wr S_{k}}_{G\wr S_{k-1} \times G}U ((f^g, \sigma)) \cdot \tr_2(\tau)\right) \cdot \overline{\Res^{G\wr S_{k+1}}_{G\wr S_{k-1} \times (G\times S_2)}V ((\hat{f^g}, \sigma\tau))} \\
&= \frac{1}{|K|}\sum_{\substack{((f,\sigma),(g,\tau)) \\ \in (G\wr S_{k-1})\times(G\times S_2)}} ((\Res^{G\wr S_{k}}_{G\wr S_{k-1} \times G}U) \boxtimes \tr_2)((f^g, \sigma, \tau)) \cdot \overline{\Res^{G\wr S_{k+1}}_{G\wr S_{k-1} \times (G\times S_2)}V ((\hat{f^g}, \sigma\tau))} \\
&= \langle (\Res^{G\wr S_{k}}_{G\wr S_{k-1} \times G}U) \boxtimes \tr_2, \Res^{G\wr S_{k+1}}_{G\wr S_{k-1} \times (G\times S_2)}V \rangle.
\end{align*}
By Frobenius reciprocity, this equals 
\[\langle \Ind^{G\wr S_{k+1}}_{G\wr S_{k-1} \times (G\times S_2)}((\Res^{G\wr S_{k}}_{G\wr S_{k-1} \times G}U) \boxtimes \tr_2), V \rangle.\]
By transitivity of induction we obtain
\[\langle \Ind^{G\wr S_{k+1}}_{G\wr S_{k-1} \times (G \wr S_2)}\Ind^{G\wr S_{k-1} \times (G \wr S_2)}_{G\wr S_{k-1} \times (G\times S_2)}((\Res^{G\wr S_{k}}_{G\wr S_{k-1} \times G}U) \boxtimes \tr_2), V \rangle.\]

Now we analyze this expression using the branching rules described in \Cref{sec:branching_rules}. Let $U$ be a simple module of $G \wr S_k$ corresponding to a multipartition $\Lambda$. 

By \Cref{prop:Simple_Branching}, we have 
\[ 
\Res^{G \wr S_{k+1}}_{(G \wr S_k) \times G} (S^\Lambda) = \bigoplus_{r=1}^{l} \bigoplus_{\Gamma \in Y^-_r(\Lambda)} S^\Gamma \boxtimes U_r, 
\]
where $\Gamma \in Y^-_r(\Lambda)$ means that $\Gamma$ is obtained by removing one box from the $r$-th component of $\Lambda$.
Let $S^\Gamma \boxtimes U_r$ be one component in this summation. The next step is to compute the induction 
\[\Ind^{G\wr S_{k-1} \times (G \wr S_2)}_{G\wr S_{k-1} \times (G\times S_2)}(S^\Gamma \boxtimes U_r \boxtimes \tr_2)\]
For this we need to compute the induction 
\[\Ind^{G \wr S_2}_{ (G\times S_2)}(U_r \boxtimes \tr_2).\]
Let $m^r_{i,j}$ be the multiplicity of $U_r$ in $U_i\otimes U_j$, let $m^{+,r}_i$ be the multiplicity of $U_r$ in $\Sym^2U_i$, and let $m^{-,r}_i$ be the multiplicity of $U_r$ in $\Alt^2U_i$.
According to \Cref{lem:Induction_GwrS_2Wij} and \Cref{lem:Induction_GwrS_2} we have that
\[\Ind^{G \wr S_2}_{ (G\times S_2)}(U_r \boxtimes \tr_2)=\bigoplus_{1\leq i<j\leq l} \left(m^r_{i,j}W_{i,j}\right)\oplus \bigoplus_{i=1}^l \left(m^{+,r}_iW^+_i\oplus m^{-,r}_iW^-_i\right)\]
so
\[\Ind^{G\wr S_{k-1} \times (G \wr S_2)}_{G\wr S_{k-1} \times (G\times S_2)}\left(S^\Gamma \boxtimes U_r \boxtimes \tr_2\right)=\bigoplus_{1\leq i<j\leq l}\left(m^r_{i,j}(S^\Gamma \boxtimes W_{i,j})\right)\oplus \bigoplus _{i=1}^l \left(m^{+,r}_i(S^\Gamma \boxtimes W^+_i)\oplus m^{-,r}_i(S^\Gamma \boxtimes W^-_i)\right).\]
Finally, by \Cref{Inductopn_two_steps} we have that
\[\Ind^{G\wr S_{k+1}}_{G\wr S_{k-1} \times (G \wr S_2)}\left(\bigoplus_{1\leq i<j\leq l}m^r_{i,j}(S^\Gamma \boxtimes W_{i,j})\oplus \bigoplus _{i=1}^l \left(m^{+,r}_i(S^\Gamma \boxtimes W^+_i)\oplus m^{-,r}_i(S^\Gamma \boxtimes W^-_i)\right)\right)=\]
\[\bigoplus_{1\leq i<j\leq l }\left(m^r_{i,j} \bigoplus_{\Delta \in Y^+_{i,j}(\Gamma)} S^{\Delta}\right)\oplus \bigoplus_{i=1}^l \left( m_i^{+,r}\bigoplus_{\Delta \in Y^+_{i,H^2}(\Gamma)} S^{\Delta}\oplus m_i^{-,r}\bigoplus_{\Delta \in Y^+_{i,V^2}(\Gamma)} S^{\Delta}.\right)\]
From this we can conclude:
\begin{thm} \label{thm:Quiver_of_GwrE_n}
Let $G$ be a group with $l$ conjugacy classes and let $\IRep G=\{U_1,\ldots,U_l\}$. Define $m^r_{i,j},m^{+,r}_i,m^{-,r}_i$ as above. The quiver $Q$ of the monoid $G\wr \PT_n$ is described as follows. The vertices correspond to multi-Young diagrams with $l$ components and $k$ boxes, where $k$ varies from $0$ to $n$. Let $U$ be a multi-Young diagram with $k$ boxes, and $V$ be a multi-Young diagram with $p$ boxes. If $p\neq k+1$, then there are no arrows from $V$ to $U$. If $p=k+1$, then the number of arrows depends on the ways that $V$ can be constructed from $U$ by removing one box and adding two. For each way that $V$ can be constructed from $U$ by removing one box from the $r$-th component and adding one box in components $i$ and $j$ ($i\neq j$) we add $m^r_{i,j}$ arrows. In each way that $V$ can be constructed from $U$ by removing one box from the $r$-th component and adding two boxes on the $i$-th component but not on the same column we add $m^{+,r}_i$ arrows. In each way that $V$ can be constructed from $U$ by removing one box from the $r$-th component and adding two boxes on the $i$-th component but not on the same row we add $m^{-,r}_i$ arrows.
\end{thm}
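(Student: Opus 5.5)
The proof assembles the reductions already made in this section. By \Cref{thm:Ehresmann_Iso}, $\mathbb{C}(G\wr\PT_n)\simeq\mathbb{C}\mathbf{C}(G\wr\PT_n)\simeq\mathbb{C}(G\wr\E_n)$. By \Cref{lem:Isomorphis_objects}, $G\wr\E_n$ is equivalent to its skeleton $G\wr\SE_n$, so the two algebras are Morita equivalent and have the same quiver. The category $G\wr\SE_n$ is a finite skeletal EI-category with automorphism groups $G\wr S_k$, so \Cref{thm:QuiverOfEICategories} applies. Its first part gives the vertices: the simple $G\wr S_k$-modules for $0\le k\le n$, that is, multi-Young diagrams with $l$ components and $k$ boxes. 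By \Cref{lem:IrreducibleMorphisms}, irreducible morphisms exist only from $[k+1]$ to $[k]$. Hence there are no arrows from $V$ to $U$ when $p\neq k+1$.

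For $p=k+1$, the number of arrows is the multiplicity of $U\otimes V^\ast$ in $\mathbb{C}X$. The action is transitive by \Cref{lem:TransitiveAction}, so $\mathbb{C}X\simeq\Ind_K\tr_K$, where $K\simeq (G\wr S_{k-1})\times G\times S_2$ by \Cref{lem:StabilizerDescription} and the lemma following it. The character computation above, using Frobenius reciprocity twice and transitivity of induction, turns this multiplicity into
\[
\langle \Ind^{G\wr S_{k+1}}_{G\wr S_{k-1}\times(G\wr S_2)}\Ind^{G\wr S_{k-1}\times(G\wr S_2)}_{G\wr S_{k-1}\times(G\times S_2)}\bigl((\Res^{G\wr S_k}_{G\wr S_{k-1}\times G}U)\boxtimes\tr_2\bigr),V\rangle .
\]
The key check is that the embedding $K\hookrightarrow G\wr S_{k+1}$ via $((f^g,\sigma),(\widehat{f^g},\sigma\tau))\mapsto(\widehat{f^g},\sigma\tau)$ matches the block embedding $G\wr S_{k-1}\times(G\times S_2)\to G\wr S_{k-1}\times(G\wr S_2)\to G\wr S_{k+1}$. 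Here $G\times S_2$ sits in $G\wr S_2$ as $(g,\tau)\mapsto((g,g),\tau)$, the embedding used in \Cref{lem:Induction_GwrS_2Wij} and \Cref{lem:Induction_GwrS_2}. This is exactly the block-matrix description $A\oplus gP_\tau$, so the identification holds.

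Next I expand with the branching rules. By \Cref{prop:Simple_Branching}, the restriction of $U=S^\Lambda$ is $\bigoplus_r\bigoplus_{\Gamma\in Y^-_r(\Lambda)}S^\Gamma\boxtimes U_r$. Frobenius reciprocity applied to \Cref{lem:Induction_GwrS_2Wij} and \Cref{lem:Induction_GwrS_2} gives
\[
\Ind_{G\times S_2}^{G\wr S_2}(U_r\boxtimes\tr_2)=\bigoplus_{i<j}m^r_{i,j}W_{i,j}\oplus\bigoplus_i\bigl(m^{+,r}_iW_i^+\oplus m^{-,r}_iW_i^-\bigr).
\]
Here the multiplicity of $W_{i,j}$ is read from $\tr_2$, of $W_i^+$ from item 1, and of $W_i^-$ from item 3. \Cref{Inductopn_two_steps} then converts each $S^\Gamma\boxtimes W$ into a sum of $S^\Delta$ over $Y^+_{i,j}(\Gamma)$, $Y^+_{i,H^2}(\Gamma)$ or $Y^+_{i,V^2}(\Gamma)$. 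All the operations used are additive.

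The multiplicity of $V=S^\Delta$ is therefore the sum, over every triple consisting of a removed box in component $r$ giving $\Gamma$ and a way of adding two boxes to $\Gamma$ that yields $\Delta$, of $m^r_{i,j}$, $m^{+,r}_i$ or $m^{-,r}_i$ according to the type of addition. This is exactly the counting in the statement. The only delicate point is the bookkeeping of "ways": each pair $(\Gamma,\Delta)$ contributes once for each admissible removal. No multiplicities beyond $1$ arise from the Pieri-type rules, since $\Delta$ appears at most once in each induced sum.
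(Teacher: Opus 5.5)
Your proposal is correct and follows the paper's own argument step for step: the Ehresmann isomorphism, passage to the skeleton $G\wr\SE_n$, the EI-category quiver theorem with irreducible morphisms only from $[k+1]$ to $[k]$, the stabilizer $K\simeq (G\wr S_{k-1})\times G\times S_2$, the Frobenius-reciprocity character computation, and then the three branching rules. One small point of wording: you wrote ``or'' when assigning $m^r_{i,j}$, $m^{+,r}_i$ or $m^{-,r}_i$ to each addition, but when the two boxes added to component $i$ lie in different rows and different columns, $\Delta$ belongs to both $Y^+_{i,H^2}(\Gamma)$ and $Y^+_{i,V^2}(\Gamma)$ and contributes $m^{+,r}_i+m^{-,r}_i$; your direct-sum bookkeeping, like the statement, already counts both.
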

\subsection{Examples}
\subsubsection{The quiver of $\mathbb{C}\PT_n$}
If $G$ is the trivial group then $G\wr \PT_n\simeq \PT_n$. In this case, $\IRep G$ contains only the trivial module $U_1=\tr_G$. Clearly $\Sym^2 \tr_G=\tr_G$ and $\Alt^2 \tr_G=0$. Therefore, $m^{+,1}_1=1$ and $m^{-,1}_1=0$. Therefore, we retrieve \cite[Theorem 3.8]{Stein2016}:
\begin{thm}
The vertices of the quiver $Q$ of the monoid algebra $\mathbb{C}\PT_n$ correspond to Young diagrams with $k$ boxes where $k$ varies from $0$ to $n$. Let $U$ be a Young diagram with $k$ boxes and $V$ be a Young diagram with $p$ boxes. If $p\neq k+1$, then there are no arrows from $V$ to $U$. If $p=k+1$, then the number of arrows from  $V$ to $U$ is the number of ways that $V$ can be constructed from $U$ by removing one box and adding two, but not in the same column.
\end{thm}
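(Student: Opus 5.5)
This is the specialization of \Cref{thm:Quiver_of_GwrE_n} to the trivial group $G=\{1_G\}$, so the plan is to check that every ingredient collapses to the classical statement.

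First, identify the monoid. When $G$ is trivial there is exactly one partial function $f$ with $\dom(f)=\dom(\alpha)$ for each $\alpha\in\PT_n$. Hence the map $(f,\alpha)\mapsto\alpha$ is a bijection $G\wr\PT_n\to\PT_n$. The multiplication rule $(f_1,\alpha_1)(f_2,\alpha_2)=((f_1\ast\alpha_2)\cdot f_2,\alpha_1\alpha_2)$ shows that this bijection is a monoid isomorphism. Therefore $\mathbb{C}(G\wr\PT_n)\simeq\mathbb{C}\PT_n$, and the two algebras have the same quiver.

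Second, specialize the combinatorial data. Here $l=1$, so a multi-Young diagram with one component is just a Young diagram. The vertices are therefore Young diagrams with $k$ boxes, $0\le k\le n$. The only simple module is $U_1=\tr_G$, so the only index available for $r$, $i$ and $j$ is $1$. The case ``$i\neq j$'' in \Cref{thm:Quiver_of_GwrE_n} cannot occur, so there are no $m^r_{i,j}$ contributions.

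Next, compute the multiplicities. Since $\tr_G\otimes\tr_G$ is one-dimensional and the swap $S$ fixes $1\otimes 1$, we get $\Sym^2\tr_G=\tr_G$ and $\Alt^2\tr_G=0$. The character formulas give the same result: $\tfrac12(1+1)=1$ and $\tfrac12(1-1)=0$. Hence $m^{+,1}_1=1$ and $m^{-,1}_1=0$.

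Finally, substitute into \Cref{thm:Quiver_of_GwrE_n}. There are no arrows unless $p=k+1$. When $p=k+1$, each way of obtaining $V$ from $U$ by removing one box and adding two boxes not in the same column contributes $m^{+,1}_1=1$ arrow. The vertical-strip constructions contribute $0$. So the number of arrows from $V$ to $U$ equals the number of such constructions, which is exactly the statement.

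The only point needing care is what ``each way'' means. The count should range over pairs $(\Gamma,\Delta)$ with $\Gamma\in Y^-_1(U)$ and $\Delta=V\in Y^+_{1,H^2}(\Gamma)$, counted with multiplicity over the intermediate diagram $\Gamma$. This is exactly the counting that arises from the direct sum over $\Gamma$ in the computation preceding \Cref{thm:Quiver_of_GwrE_n}, so the two counts agree and nothing further is required.
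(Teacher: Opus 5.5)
Your proposal is correct and follows the paper's own argument: specialize \Cref{thm:Quiver_of_GwrE_n} to the trivial group, use $G\wr\PT_n\simeq\PT_n$, and note that $\Sym^2\tr_G=\tr_G$ and $\Alt^2\tr_G=0$, so $m^{+,1}_1=1$ and $m^{-,1}_1=0$ (and no $m^r_{i,j}$ terms arise since $l=1$). Your extra checks, namely the explicit monoid isomorphism and the remark that ``ways'' are counted over intermediate diagrams $\Gamma$, simply make explicit what the paper leaves implicit.
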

\subsubsection{Generalized monoid of partial functions}
Let $G=C_l$ be the cyclic group with $l$ elements, say $G = \{1_G, g, g^2, \dots, g^{l-1}\}$ where $g^l = 1_G$. 
All simple modules $\rho \in \text{IRep } G$ are one-dimensional. There are $l$ such modules, $\rho_r$ for $r = 0, \dots, l-1$, defined by:
$$\rho_r(g^n) = \omega^{rn}$$
where $\omega = e^{2\pi i / l}$ is a primitive $l$-th root of unity. These modules satisfy $\rho_i \otimes \rho_j = \rho_{i+j}$, where the sum is taken modulo $l$. Since each $\rho_r$ is one-dimensional, its symmetric square and alternating square satisfy:
$$\text{Sym}^2 \rho_r = \rho_r \otimes \rho_r = \rho_{2r} \quad \text{and} \quad \text{Alt}^2 \rho_r = 0$$
Therefore, the decomposition coefficients $m_{i,j}^r$ for the tensor product are given by $m_{i,j}^r = 1$ if $i+j \equiv r \pmod{l}$ and zero otherwise. For the symmetric and alternating squares, the coefficients are $m^{+,r}_i = 1$ if $2i \equiv r \pmod{l}$ and zero otherwise, while $m^{-,r}_i = 0$ for all $r \in \{0, \dots, l-1\}$. We end with the following result:
\begin{thm}
The vertices of the quiver $Q$ of the monoid algebra $\mathbb{C}(C_l\wr\PT_n)$ correspond to mutli-Young diagrams with $l$ components, indexed $\{0,\ldots l-1\}$, and $k$ boxes, where $k$ varies from $0$ to $n$. Let $U$ be a multi-Young diagram with $k$ boxes and $V$ be a multi-Young diagram with $p$ boxes. If $p\neq k+1$, then there are no arrows from $V$ to $U$. If $p=k+1$, then the number of arrows depends on the ways that $V$ can be constructed from $U$ by removing one box and adding two. For each way that $V$ can be constructed from $U$ by removing one box from the $r$-th component and adding one box in components $i$and $j$ we add an arrow if $i+j \equiv r \pmod{l}$. In each way that $V$ can be constructed from $U$ by removing one box from the $r$-th component and adding two boxes on the $i$-th component but not on the same column we add an arrow if $2i \equiv r \pmod{l}$.
\end{thm}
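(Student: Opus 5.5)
This theorem is the specialization of \Cref{thm:Quiver_of_GwrE_n} to $G=C_l$, so the plan is to instantiate the general description with the decomposition data of $C_l$ rather than redo any representation theory. First, the vertex set needs no new work. $C_l$ is abelian with $l$ conjugacy classes, so $\IRep C_l=\{\rho_0,\ldots,\rho_{l-1}\}$. We index the $l$ components of the multi-Young diagrams by $\{0,\ldots,l-1\}$, the $r$-th component corresponding to $\rho_r$. \Cref{thm:Quiver_of_GwrE_n} then gives vertices indexed by multi-Young diagrams with $l$ components and $k$ boxes, $0\le k\le n$. It also gives that there are no arrows from $V$ to $U$ unless $V$ has exactly one more box than $U$.

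Second, I compute the three families of coefficients. Since every $\rho_r$ is a one-dimensional character, $\rho_i\otimes\rho_j$ has character $\omega^{(i+j)n}$ at $g^n$, so $\rho_i\otimes\rho_j\cong\rho_{i+j \bmod l}$. Hence $m^r_{i,j}=1$ exactly when $i+j\equiv r \pmod l$, and $m^r_{i,j}=0$ otherwise. For the squares, I use the character formulas from the preliminaries: $(\Alt^2\rho_i)(h)=\tfrac12(\rho_i(h)^2-\rho_i(h^2))=0$, because $\rho_i(h^2)=\rho_i(h)^2$ for a homomorphism to $\mathbb{C}^\times$. Equally, $\Alt^2$ of a one-dimensional space is zero. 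Likewise $(\Sym^2\rho_i)(h)=\rho_i(h)^2=\rho_{2i}(h)$. So $m^{+,r}_i=1$ iff $2i\equiv r\pmod l$, and $m^{-,r}_i=0$ for all $i,r$.

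Third, I substitute these values into \Cref{thm:Quiver_of_GwrE_n}. For each way of obtaining $V$ from $U$ by removing one box from component $r$ and adding boxes in components $i\neq j$, the general theorem contributes $m^r_{i,j}$ arrows, which is one arrow precisely when $i+j\equiv r$. For each way of removing a box from component $r$ and adding a horizontal strip of two boxes in component $i$, it contributes $m^{+,r}_i$ arrows, which is one arrow exactly when $2i\equiv r$. The vertical-strip contributions all vanish because $m^{-,r}_i=0$. This is exactly the stated description, so the proof is complete once the coefficients are identified.

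There is no real obstacle here. The only point needing care is bookkeeping: the indexing of components must match the chosen enumeration $\rho_0,\ldots,\rho_{l-1}$ of $\IRep C_l$, so that the congruence conditions refer to the same labels used for the components.
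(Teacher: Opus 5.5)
Your proposal is correct and matches the paper's argument: the paper also obtains this result by specializing \Cref{thm:Quiver_of_GwrE_n} to $G=C_l$. It uses that every simple $C_l$-module is a one-dimensional $\rho_r$ with $\rho_i\otimes\rho_j\cong\rho_{i+j}$, $\Sym^2\rho_i\cong\rho_{2i}$ and $\Alt^2\rho_i=0$, so $m^r_{i,j}$ and $m^{+,r}_i$ become the stated congruence indicators and the vertical-strip terms vanish. Your derivation of the symmetric and alternating squares from the character formulas is just a more explicit version of what the paper asserts directly.
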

\begin{exmp}
Consider the case $G=C_2$. The quiver of the algebra $\mathbb{C}(C_2 \wr \PT_3)$ is given by the following figure:

\begin{tikzpicture}[node distance=2cm, auto, thick, scale=0.7, xscale=0.9, every node/.style={scale=0.7}]

  \node (v0) at (0, 0) {$\left( \emptyset , \emptyset \right)$};

  \node (v1_0) at (-3, 2.5) {$\left( \begin{tabular}{|c|} \hline \phantom{.} \\ \hline \end{tabular} , \emptyset \right)$};
  \node (v1_1) at (3, 2.5) {$\left( \emptyset , \begin{tabular}{|c|} \hline \phantom{.} \\ \hline \end{tabular} \right)$};

  \node (v2_0h) at (-6, 5.5) {$\left( \begin{tabular}{|c|c|} \hline \phantom{.} & \phantom{.} \\ \hline \end{tabular} , \emptyset \right)$};
  \node (v2_0v) at (-3.5, 5.5) {$\left( \begin{tabular}{|c|} \hline \phantom{.} \\ \hline \phantom{.} \\ \hline \end{tabular} , \emptyset \right)$};
  \node (v2_m)  at (0, 5.5)  {$\left( \begin{tabular}{|c|} \hline \phantom{.} \\ \hline \end{tabular} , \begin{tabular}{|c|} \hline \phantom{.} \\ \hline \end{tabular} \right)$};
  \node (v2_1h) at (3.5, 5.5)  {$\left( \emptyset , \begin{tabular}{|c|c|} \hline \phantom{.} & \phantom{.} \\ \hline \end{tabular} \right)$};
  \node (v2_1v) at (6, 5.5)  {$\left( \emptyset , \begin{tabular}{|c|} \hline \phantom{.} \\ \hline \phantom{.} \\ \hline \end{tabular} \right)$};

  \node (v3_30a) at (-7.5, 9) {$\left( \begin{tabular}{|c|c|c|} \hline \phantom{.} & \phantom{.} & \phantom{.} \\ \hline \end{tabular} , \emptyset \right)$};
  \node (v3_30b) at (-7.5, 10.5) {$\left( \begin{tabular}{|c|c|} \hline \phantom{.} & \phantom{.} \\ \hline \phantom{.} & \multicolumn{1}{c}{} \\ \cline {1-1} \end{tabular} , \emptyset \right)$};
  \node (v3_30c) at (-7.5, 12) {$\left( \begin{tabular}{|c|} \hline \phantom{.} \\ \hline \phantom{.} \\ \hline \phantom{.} \\ \hline \end{tabular} , \emptyset \right)$};
  
  \node (v3_21a) at (-2.5, 9) {$\left( \begin{tabular}{|c|c|} \hline \phantom{.} & \phantom{.} \\ \hline \end{tabular} , \begin{tabular}{|c|} \hline \phantom{.} \\ \hline \end{tabular} \right)$};
  \node (v3_21b) at (-2.5, 10.5) {$\left( \begin{tabular}{|c|} \hline \phantom{.} \\ \hline \phantom{.} \\ \hline \end{tabular} , \begin{tabular}{|c|} \hline \phantom{.} \\ \hline \end{tabular} \right)$};
  
  \node (v3_12a) at (2.5, 9) {$\left( \begin{tabular}{|c|} \hline \phantom{.} \\ \hline \end{tabular} , \begin{tabular}{|c|c|} \hline \phantom{.} & \phantom{.} \\ \hline \end{tabular} \right)$};
  \node (v3_12b) at (2.5, 10.5) {$\left( \begin{tabular}{|c|} \hline \phantom{.} \\ \hline \end{tabular} , \begin{tabular}{|c|} \hline \phantom{.} \\ \hline \phantom{.} \\ \hline \end{tabular} \right)$};
  
  \node (v3_03a) at (7.5, 9) {$\left( \emptyset , \begin{tabular}{|c|c|c|} \hline \phantom{.} & \phantom{.} & \phantom{.} \\ \hline \end{tabular} \right)$};
  \node (v3_03b) at (7.5, 10.5) {$\left( \emptyset , \begin{tabular}{|c|c|} \hline \phantom{.} & \phantom{.} \\ \hline \phantom{.} & \multicolumn{1}{c}{} \\ \cline {1-1} \end{tabular} \right)$};
  \node (v3_03c) at (7.5, 12) {$\left( \emptyset , \begin{tabular}{|c|} \hline \phantom{.} \\ \hline \phantom{.} \\ \hline \phantom{.} \\ \hline \end{tabular} \right)$};

  \draw[->] (v2_0h) -- (v1_0);
  \draw[->] (v2_m) -- (v1_1);
  \draw[->] (v2_1h) -- (v1_0);

  \draw[->] (v3_30a) -- (v2_0h);
  \draw[->] (v3_30b) -- (v2_0h);
  \draw[->] (v3_30a) -- (v2_0v);
  \draw[->] (v3_30b) -- (v2_0v);

  \draw[->, double, double distance=1.2pt] (v3_21a) -- (v2_m);
  \draw[->] (v3_21b) -- (v2_m);

  \draw[->] (v3_12a) -- (v2_1h);
  \draw[->] (v3_12b) -- (v2_1h);
  \draw[->] (v3_12a) -- (v2_1v);
  \draw[->] (v3_12b) -- (v2_1v);
  
  \draw[->] (v3_12a) -- (v2_0h);
  \draw[->] (v3_12a) -- (v2_0v);

  \draw[->] (v3_03a) -- (v2_m);
  \draw[->] (v3_03b) -- (v2_m);

\end{tikzpicture}

\noindent The first component of each multi-Young diagram corresponds to the trivial module $\rho_0$, while the second component corresponds to the alternating module $\rho_1$. 

\medskip

\noindent Notably, there are two arrows from 
$\scalebox{0.8}{$\left( \begin{tabular}{|c|c|} \hline \phantom{.} & \phantom{.} \\ \hline \end{tabular} \,,\, \begin{tabular}{|c|} \hline \phantom{.} \\ \hline \end{tabular} \right)$}$ 
to 
$\scalebox{0.8}{$\left( \begin{tabular}{|c|} \hline \phantom{.} \\ \hline \end{tabular} \,,\, \begin{tabular}{|c|} \hline \phantom{.} \\ \hline \end{tabular} \right)$}$. 
This multiplicity arises because the branching construction can be satisfied in two distinct ways: 
\begin{itemize}
    \item First, by removing one box from the first component ($r=0$) and adding two boxes in the same row, which is valid since $0+0 \equiv 0 \pmod{2}$.
    \item Second, by removing one box from the second component ($r=1$) and adding one box to each component, which is valid since $0+1 \equiv 1 \pmod{2}$.
\end{itemize}
\end{exmp}
\subsubsection{The quiver of $\mathbb{C}(S_3\wr \PT_n)$}
We now consider the case of the smallest non-abelian group $G = S_3$. The set $\IRep(G)$ contains three simple modules: The trivial module $U_1$, the sign module $U_2$, and the standard module $U_3$. It is well-known and easily verified by character considerations that the symmetric and alternating squares satisfy:
\begin{align*}
    \Sym^2 U_1 &= \Sym^2 U_2 = U_1, & \Sym^2 U_3 &= U_3 \oplus U_1, \\
    \Alt^2 U_1 &= \Alt^2 U_2 = 0,   & \Alt^2 U_3 &= U_2.
\end{align*}
Furthermore, the tensor products are given by:
\[
U_1 \otimes U_2 = U_2, \quad U_1 \otimes U_3 = U_3, \quad U_2 \otimes U_3 = U_3.
\]
Therefore, the coefficients $m_{i,j}^r$ are given by:
\[
m_{1,2}^r = \begin{cases} 
1 & \text{if } r = 2, \\
0 & \text{otherwise,}
\end{cases}
\quad \text{and} \quad
m_{1,3}^r = m_{2,3}^r = \begin{cases} 
1 & \text{if } r = 3, \\
0 & \text{otherwise.}
\end{cases}
\]
The coefficients $m_{1}^{q,r},m_{2}^{q,r}$, where $q\in\{+,-\}$,  satisfy:
\[
m_{1}^{q,r} = m_{2}^{q,r}= \begin{cases} 
1 & \text{if } q = + \text{ and } r = 1, \\
0 & \text{otherwise.}
\end{cases}
\]
Finally, 
for the coefficients $m_3^{q,r}$, where $q \in \{+, -\}$, we have:
\[
m_3^{+,r} = \begin{cases} 
1 & \text{if } r \in \{1, 3\}, \\
0 & \text{otherwise,}
\end{cases}
\quad \text{and} \quad
m_3^{-,r} = \begin{cases} 
1 & \text{if } r = 2, \\
0 & \text{otherwise.}
\end{cases}
\]
Therefore, we obtain:
\begin{thm}
The vertices of the quiver $Q$ of the monoid algebra $\mathbb{C}(S_3 \wr \PT_n)$ correspond to multi-Young diagrams $\boldsymbol{\Lambda} = (\lambda_{1}, \lambda_{2}, \lambda_{3})$ with $k$ total boxes, where $0 \leq k \leq n$. Let $U$ be a multi-Young diagram with $k$ boxes and $V$ be a multi-Young diagram with $p$ boxes. 

If $p \neq k+1$, there are no arrows from $V$ to $U$. If $p = k+1$, the number of arrows from $V$ to $U$ is the number of ways $V$ can be constructed from $U$ by removing one box and adding two boxes according to the following rules:
\begin{itemize}
    \item Remove one box from $\lambda_1$ and add two boxes to any one component, provided they are not in the same column.
    \item Remove one box from $\lambda_2$ and add one box to $\lambda_1$ and one box to $\lambda_2$.
    \item Remove one box from $\lambda_2$ and add two boxes to $\lambda_3$, provided they are not in the same row.
    \item Remove one box from $\lambda_3$ and add one box to $\lambda_1$ or $\lambda_2$ and another box to $\lambda_3$.
    \item Remove one box from $\lambda_3$ and add two boxes to $\lambda_3$, provided they are not in the same column.
\end{itemize}
\end{thm}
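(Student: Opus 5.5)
The statement is obtained by specializing \Cref{thm:Quiver_of_GwrE_n} to $G=S_3$. The plan is to compute the coefficients $m^r_{i,j}$, $m^{\pm,r}_i$ for the three simple modules $U_1,U_2,U_3$ and then read off which (remove $r$, add two boxes) operations contribute an arrow.

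First, I would verify the tensor product and square decompositions by character computations on the three conjugacy classes of $S_3$. With class representatives $e$, a transposition $t$, and a $3$-cycle $c$, the characters are $U_1=(1,1,1)$, $U_2=(1,-1,1)$ and $U_3=(2,0,-1)$. Squaring the class representatives gives $t^2=e$ and $c^2\sim c$. The character formulas for $\Sym^2$ and $\Alt^2$ from the preliminaries then give:
\begin{itemize}
    \item $\Sym^2U_3=(3,1,0)=U_1+U_3$ and $\Alt^2U_3=(1,-1,1)=U_2$;
    \item $\Sym^2U_i=U_1$ and $\Alt^2U_i=0$ for the one-dimensional modules $i=1,2$.
\end{itemize}
For the products of distinct simples, $U_1\otimes U_j=U_j$ because $U_1$ is trivial, and $U_2\otimes U_3$ has character $(2,0,-1)=U_3$. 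This yields exactly the tables of coefficients listed before the statement.

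Next, I would translate these coefficients into the arrow rules of \Cref{thm:Quiver_of_GwrE_n}, grouping by the component $r$ from which a box is removed.
\begin{itemize}
    \item \textbf{$r=1$.} We need $m^{+,1}_i=1$ for $i=1,2,3$, while $m^{-,1}_i=0$ and $m^1_{i,j}=0$ for $i\neq j$ (since $U_1$ appears in $U_i\otimes U_j$ only when $U_j\cong U_i^*=U_i$). So we add two boxes to one component, not in the same column.
    \item \textbf{$r=2$.} The nonzero coefficients are $m^2_{1,2}=1$ and $m^{-,2}_3=1$. So we either add boxes to $\lambda_1$ and $\lambda_2$, or add a vertical strip to $\lambda_3$.
    \item \textbf{$r=3$.} The nonzero coefficients are $m^3_{1,3}=m^3_{2,3}=1$ and $m^{+,3}_3=1$. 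Here $m^3_{1,2}=0$, and $U_3$ does not occur in $\Sym^2U_{1,2}$.
\end{itemize}
Every other coefficient vanishes, so the five listed rules are exhaustive. Since each rule carries multiplicity one, the number of arrows equals the number of such constructions.

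I see no real obstacle. The only point needing care is the bookkeeping: make sure no nonzero coefficient is missed (for example, confirm $m^{-,1}_3=0$ and $m^{+,2}_3=0$), and keep the ordered pairs $i<j$ consistent with the unordered phrasing "one box to $\lambda_1$ or $\lambda_2$ and another box to $\lambda_3$".
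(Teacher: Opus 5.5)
Your proposal is correct and follows the paper's approach: the paper also obtains this theorem by specializing \Cref{thm:Quiver_of_GwrE_n} to $G=S_3$, computing $\Sym^2$, $\Alt^2$ and the pairwise tensor products of the three simple modules, and reading off the nonzero coefficients $m^r_{i,j}$ and $m^{\pm,r}_i$ grouped by the component $r$ from which a box is removed. Your explicit character computations on the classes of $e$, a transposition and a $3$-cycle, together with the self-duality argument for $m^1_{i,j}=0$, just supply the details that the paper states are ``easily verified by character considerations.''
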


\begin{exmp}
The quiver of the algebra $\mathbb{C}(S_3 \wr \PT_2)$ is given by the following figure:

\begin{tikzpicture}[node distance=2.5cm, auto, thick, scale=0.7, every node/.style={scale=0.6}]

  \newcommand{\boxx}{\begin{tabular}{|c|} \hline \phantom{.} \\ \hline \end{tabular}}
  \newcommand{\hboxx}{\begin{tabular}{|c|c|} \hline \phantom{.} & \phantom{.} \\ \hline \end{tabular}}
  \newcommand{\vboxx}{\begin{tabular}{|c|} \hline \phantom{.} \\ \hline \phantom{.} \\ \hline \end{tabular}}

  \node (v0) at (0, 0) {$(\emptyset, \emptyset, \emptyset)$};

  \node (v1_1) at (-4, 3) {$(\boxx, \emptyset, \emptyset)$};
  \node (v1_2) at (0, 3)  {$(\emptyset, \boxx, \emptyset)$};
  \node (v1_3) at (4, 3)  {$(\emptyset, \emptyset, \boxx)$};

  \node (v2_1h) at (-8, 7) {$(\hboxx, \emptyset, \emptyset)$};
  \node (v2_1v) at (-8, 9) {$(\vboxx, \emptyset, \emptyset)$};
  
  \node (v2_2h) at (-4, 7) {$(\emptyset, \hboxx, \emptyset)$};
  \node (v2_2v) at (-4, 9) {$(\emptyset, \vboxx, \emptyset)$};
  
  \node (v2_3h) at (0, 7)  {$(\emptyset, \emptyset, \hboxx)$};
  \node (v2_3v) at (0, 9)  {$(\emptyset, \emptyset, \vboxx)$};

  \node (v2_12) at (4, 7)  {$(\boxx, \boxx, \emptyset)$};
  \node (v2_13) at (6, 8)  {$(\boxx, \emptyset, \boxx)$};
  \node (v2_23) at (8, 7)  {$(\emptyset, \boxx, \boxx)$};


  
  \draw[->] (v2_1h) -- (v1_1);
  \draw[->] (v2_2h) -- (v1_1);
  \draw[->] (v2_3h) -- (v1_1);

  \draw[->] (v2_12) -- (v1_2);
  \draw[->] (v2_3v) -- (v1_2);

  \draw[->] (v2_13) -- (v1_3);
  \draw[->] (v2_23) -- (v1_3);
  \draw[->] (v2_3h) -- (v1_3);

\end{tikzpicture}
\end{exmp}

\section{Global dimension of $\mathbb{C}(G\wr \PT_n)$}
\label{sec:global_dimension_of_PT_n}
In this section, we prove that the global dimension of the algebra $\mathbb{C}(G\wr \PT_n)$ is $n-1$ based on the results established for the algebra $\mathbb{C}\PT_n$ in \cite{Stein2019}. We show that every projective module of $\mathbb{C}\PT_n$ can be lifted to a corresponding projective module of $\mathbb{C}(G\wr \PT_n)$, allowing us to transfer the known homological properties to the wreath product case.

\subsection{The global dimension of an algebra}
Let $A$ be a finite-dimensional $\mathbb{C}$-algebra. An $A$-module $P$ is \emph{projective} if the functor $\text{Hom}_A(P, -)$ is exact, or equivalently, if $P$ is a direct summand of a free module. A module is \emph{indecomposable} if it cannot be written as a direct sum of two non-zero submodules. Every projective $A$-module decomposes into a direct sum of indecomposable projective modules.

These modules are classified by the idempotents of the algebra. Two idempotents $e, f$ are \emph{orthogonal} if $ef = fe = 0$. An idempotent is \emph{primitive} if it cannot be written as a sum of two non-zero orthogonal idempotents. A set $\{e_1, \dots, e_n\}$ is a \emph{complete set of orthogonal primitive idempotents} if the elements are pairwise orthogonal, each $e_i$ is primitive, and $\sum e_i = 1_A$. Every indecomposable projective $A$-module is isomorphic to $Ae$ for some primitive idempotent $e$, and a complete set of primitive orthogonal idempotents yields all indecomposable projectives up to isomorphism.

Let $A$ be a finite-dimensional $\mathbb{C}$-algebra. The \emph{Jacobson radical} of $A$, denoted $\Rad(A)$, is defined as the intersection of all maximal left ideals of $A$. Equivalently, $\Rad(A)$ is the unique maximal nilpotent two-sided ideal of $A$. For any finite-dimensional $A$-module $M$, we define its \emph{radical}, $\Rad(M)$, as the intersection of all maximal submodules of $M$. A fundamental property of the radical is that it can be computed via the action of the algebra's radical. Specifically, we have the identity $\Rad(M) = \Rad(A)M$.

Let $A$ be a finite-dimensional $\mathbb{C}$-algebra and $M$ an $A$-module. A \emph{projective cover} of $M$ is a pair $(P, \pi)$, where $P$ is a projective $A$-module and $\pi:P\to M$ is a surjection such that $\ker(\pi)\subseteq \Rad(P)$.

For the algebras considered here, every finite-dimensional module $M$ admits a projective cover, which is unique up to isomorphism.

Let $A$ be a finite-dimensional $\mathbb{C}$-algebra and $M$ an $A$-module. A \emph{projective resolution} of $M$ is an exact sequence of the form
\[ \mathbf{P}_\bullet: \dots \to P_n \xrightarrow{d_n} P_{n-1} \to \dots \to P_1 \xrightarrow{d_1} P_0 \xrightarrow{\epsilon} M \to 0 \]
where each $P_i$ is a projective $A$-module. Such a resolution is called \emph{minimal} if $P_0$ is the projective cover of $M$, and for each $i \geq 1$, $P_i$ is the projective cover of $\ker(d_{i-1})$. 

Every finite-dimensional module $M$ possesses a minimal projective resolution, which is unique up to isomorphism of complexes. The \emph{projective dimension} of $M$, denoted $\pd(M)$, is the length of its minimal projective resolution, that is, the largest integer $n$ such that $P_n \neq 0$ (or infinity if the resolution does not terminate).
For a finite-dimensional algebra $A$, the \emph{global dimension} of $A$, denoted $\gd(A)$, is the supremum of the projective dimensions of all simple $A$-modules:
\[ \gd(A) = \sup \{ \pd(S) \mid S \text{ is a simple } A\text{-module} \} \]
The global dimension is an invariant of the Morita equivalence class of the algebra $A$. If two algebras are Morita equivalent, their global dimensions are equal.
Moreover, it is a known fact for finite-dimensional algebras that the global dimension is bounded above by the length of the longest path in the quiver of the algebra.

\subsection{Lifting projective resolutions of EI-categories}
Let $\E$ be a category and $G$ be a finite group. For simplicity, assume that $\E$ is a subcategory of $\Set$ so that there is a natural wreath product $G \wr \E$. Clearly, every $\mathbb{C}\E$-module $M$ can be inflated to a $\mathbb{C}(G \wr \E)$-module via the action $(f, m) \bullet v = m \bullet v$. We denote the inflation of $M$ to $\mathbb{C}(G \wr \E)$ by $\Inf(M)$. In fact, we consider $\Inf$ as a functor between the category of $\mathbb{C}\E$-modules and $\mathbb{C}(G\wr \E)$-modules. In this subsection, we show that if $\E$ is an EI-category the inflation functor lifts the minimal projective resolution of $M$ to the minimal projective resolution of $\Inf(M)$.

Let $\E$ be an EI-category. For every object $e \in \E^0$, we denote by $H_e$ the associated endomorphism group, and let $P_e$ be a complete set of orthogonal primitive idempotents of $\mathbb{C}H_e$. It is known (see \cite[Lemma 9.31]{Luck1989} or \cite[Corollary 4.5]{Webb2007}) that we can obtain a complete set of orthogonal primitive idempotents $P$ of $\mathbb{C}\E$ by taking the union of all complete sets of orthogonal primitive idempotents of the endomorphism groups:
\[ P = \bigcup_{e \in \E^0} P_e. \]
Therefore, if $V$ is a simple $\mathbb{C}H_e$-module isomorphic to $\mathbb{C}H_e p$ for some $p \in P_e$, then 
\[
\mathbb{C}\E p \cong \mathbb{C}\E e \otimes_{\mathbb{C}H_e} \mathbb{C}H_e p \cong \mathbb{C}\E e \otimes_{\mathbb{C}H_e} V
\]
is an indecomposable projective module of $\mathbb{C}\E$. Moreover, every indecomposable projective module is isomorphic to $\mathbb{C}\E e \otimes_{\mathbb{C}H_e} V$ for some $e \in \E^0$ and $V \in \IRep(\mathbb{C}H_e)$.

It is straightforward to verify that $G \wr \E$ is also an EI-category with the same set of objects $\E^0$. For any object $e \in \E^0$, the associated endomorphism group in the wreath product category is $G \wr H_e$. Given a simple module $V \in \IRep(\mathbb{C}H_e)$, its inflation $\Inf(V)$ is a module over $\mathbb{C}(G \wr H_e)$, which remains simple under the action defined previously. The relationship between the inflation functor and the indecomposable projectives is captured by the following lemma.

\begin{lem} \label[lemma]{lem:Proj_inflation}
Let $e \in \E^0$ and $V \in \IRep(\mathbb{C}H_e)$. Then there is an isomorphism of $\mathbb{C}(G \wr \E)$-modules:
\[
\Inf(\mathbb{C}\E e \otimes_{\mathbb{C}H_e} V) \simeq \mathbb{C}(G \wr \E)e \otimes_{\mathbb{C}(G \wr H_e)} \Inf(V).
\]
\end{lem}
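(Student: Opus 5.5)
We will construct an explicit map and check it is a well-defined isomorphism. Here $e$ denotes the identity morphism of the object $e$, and in $G\wr\E$ the corresponding idempotent is $(\mathbf{1}_e,e)$. As a vector space, $\mathbb{C}(G\wr\E)e$ has basis the morphisms $(f,m)$ with domain $e$, where $f\colon e\to G$ is a total function. Define
\[
\Phi\colon \mathbb{C}(G\wr\E)e\otimes_{\mathbb{C}(G\wr H_e)}\Inf(V)\to \Inf(\mathbb{C}\E e\otimes_{\mathbb{C}H_e}V),\qquad (f,m)\otimes v\mapsto m\otimes v .
\]
\textbf{Step 1 (balance).} We check that $\Phi$ respects the tensor relation. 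Let $(f',h)\in G\wr H_e$. Then $(f,m)(f',h)=((f\ast h)\cdot f',mh)$, so $(f,m)(f',h)\otimes v\mapsto mh\otimes v$. On the other side, $(f,m)\otimes (f',h)\bullet v=(f,m)\otimes h\bullet v\mapsto m\otimes hv$. These agree because the tensor on the right is over $\mathbb{C}H_e$.

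\textbf{Step 2 (module map).} We check that $\Phi$ is $G\wr\E$-linear. Left multiplication by $(f'',m'')$ sends $(f,m)$ to a morphism with second coordinate $m''m$, or to $0$ when the composite is undefined. The inflation action of $(f'',m'')$ on $m\otimes v$ is $m''m\otimes v$, or $0$ in the same cases, since composability depends only on the $\E$-components.

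\textbf{Step 3 (isomorphism).} We show bijectivity by a dimension count together with surjectivity. Surjectivity is clear, since $m\otimes v=\Phi((\mathbf{1},m)\otimes v)$.

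For dimensions, use that $\mathbb{C}\E e$ is a free right $\mathbb{C}H_e$-module. The group $H_e$ acts freely on the right of the morphisms out of $e$, because in an EI-category $mh=m'h$ forces $h$ to be invertible and then $mh=m$ forces $h=1$. Hence
\[
\dim(\mathbb{C}\E e\otimes_{\mathbb{C}H_e}V)=\frac{|\E(e,-)|}{|H_e|}\dim V,
\]
where $\E(e,-)$ denotes the set of morphisms with domain $e$.

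Similarly, $G\wr H_e$ acts freely on the right of the morphisms of $G\wr\E$ out of $e$. Indeed, $(f,m)(f',h)=(f,m)$ forces $mh=m$, hence $h=1$, and then $(f\ast h)\cdot f'=f$, hence $f'=\mathbf{1}$. The number of such morphisms is $|G|^{|e|}\,|\E(e,-)|$, and $|G\wr H_e|=|G|^{|e|}|H_e|$. So the dimension of the left side is again $\frac{|\E(e,-)|}{|H_e|}\dim V$. A surjection between spaces of equal finite dimension is an isomorphism.

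\textbf{Main obstacle.} The delicate point is the conventions of the wreath product: matching the inflation action with composition, and verifying freeness of the right $G\wr H_e$-action, including the case where $f$ is total on $e$. This is why the dimension count, rather than constructing an explicit inverse, is the route I would take. An inverse could alternatively be written as $m\otimes v\mapsto(\mathbf{1}_e,m)\otimes v$. Its well-definedness uses $(\mathbf{1},m)(\mathbf{1},h)=(\mathbf{1},mh)$ and the identity
\[
(f,m)\otimes v=(\mathbf{1},m)(f,\id_e)\otimes v=(\mathbf{1},m)\otimes v,
\]
which holds because $(f,\id_e)$ acts trivially on $\Inf(V)$.
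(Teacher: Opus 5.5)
Steps 1 and 2 are fine, but Step 3 has a genuine gap: the freeness claim is false, so injectivity of $\Phi$ is never established. The EI condition does not make morphisms left-cancellable. The relation $mh=m$ does not force $h=\id_e$, and this already fails in $\E_n$: a non-injective onto map is fixed by every permutation of $e$ that preserves its fibres. For example, $\dec\,\tau=\dec$ for $\tau=(k\;\,k{+}1)$, which is exactly why the stabilizer in \Cref{lem:StabilizerDescription} contains $(\id,\tau)$. Hence $\mathbb{C}\E e$ is not a free right $\mathbb{C}H_e$-module, and your formula $\dim(\mathbb{C}\E e\otimes_{\mathbb{C}H_e}V)=\frac{|\E(e,-)|}{|H_e|}\dim V$ is wrong. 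For $e=[2]$ in $\SE_2$ it gives $\tfrac{3}{2}\dim V$, which is not even an integer. The same failure occurs on the other side. Since $(\mathbf{1}_e,m)(f',h)=(f',mh)$, the stabilizer of $(\mathbf{1}_e,m)$ in $G\wr H_e$ is $\{(\mathbf{1}_e,h)\mid mh=m\}$, which is nontrivial in general.

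The repair is the route you mention only as an alternative, and it is exactly the paper's proof. Define $\Psi(m\otimes v)=(\mathbf{1}_e,m)\otimes v$ and check that it is balanced over $\mathbb{C}H_e$ using $(\mathbf{1}_e,m)(\mathbf{1}_e,h)=(\mathbf{1}_e,mh)$. The identity $(f,m)\otimes v=(\mathbf{1}_e,m)(f,\id_e)\otimes v=(\mathbf{1}_e,m)\otimes v$ then gives $\Psi\circ\Phi=\id$, and $\Phi\circ\Psi=\id$ is immediate. Once $\Phi$ is a bijective module map, checking that $\Psi$ is a module map, as the paper also does, is automatic.

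If you want to keep a dimension count, it must be done orbit by orbit. An $H_e$-orbit of $m$ with stabilizer $S_m=\{h\mid mh=m\}$ contributes $\dim V^{S_m}$ to $\dim(\mathbb{C}\E e\otimes_{\mathbb{C}H_e}V)$. The corresponding $G\wr H_e$-orbit of $(\mathbf{1}_e,m)$ has stabilizer $\{(\mathbf{1}_e,h)\mid h\in S_m\}$, which acts on $\Inf(V)$ through $S_m$ and gives the same contribution. This works, but it is more effort than the explicit inverse.
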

\begin{proof}
We denote by $\mathbf{1}_e$ the function in $G^e$ that maps every element of the set $e$ to the identity $1_G$ and by $\id_e$ we denote the identity function of $e$.
To establish the isomorphism \[\Inf(\mathbb{C}\E e \otimes_{\mathbb{C}H_e} V) \cong \mathbb{C}(G \wr \E)e \otimes_{\mathbb{C}(G \wr H_e)} \Inf(V),\] we first observe that for every $(f,m)\in (G\wr \E)e$ and $v\in V$ we have 
\begin{equation*}
\begin{split}
(f, m) \otimes v &= ((\mathbf{1}_e, m) \cdot (f, \id_e)) \otimes v \\
&= (\mathbf{1}_e, m) \otimes (f, \id_e)\bullet v \\
&= (\mathbf{1}_e, m) \otimes  (\id_e\bullet v) \\
&= (\mathbf{1}_e, m) \otimes v,
\end{split}
\end{equation*}

Next, define a linear map \[\Psi:\Inf(\mathbb{C}\E e \otimes_{\mathbb{C}H_e} V) \to \mathbb{C}(G \wr \E)e \otimes_{\mathbb{C}(G \wr H_e)} \Inf(V).
\] on simple tensors by
\[ \Psi(m \otimes v) = (\mathbf{1}_e, m) \otimes v, \]
where $m \in \mathbb{C}\E e$ and $v \in V$.  To see that $\Psi$ is well-defined, note that for any $h \in H_e$, the inflation action on $V$ satisfies $(\mathbf{1}_e, h) \bullet v = h \bullet v$. Thus,
\begin{equation*}
\begin{split}
\Psi(mh \otimes v) &= (\mathbf{1}_e, mh) \otimes v \\
&= (\mathbf{1}_e, m)(\mathbf{1}_e, h) \otimes v \\
&= (\mathbf{1}_e, m) \otimes (\mathbf{1}_e, h)\bullet v \\
&= (\mathbf{1}_e, m) \otimes h\bullet v \\
&= \Psi(m \otimes h\bullet v),
\end{split}
\end{equation*}which confirms $\Psi$ respects the tensor relation over $\mathbb{C}H_e$.

To verify that $\Psi$ is a $\mathbb{C}(G \wr \E)$-module homomorphism, let $(f, m^\prime) \in G \wr \E$ where $m^\prime \in \E(e', e'')$ and let $m \otimes v \in \mathbb{C}\E e \otimes_{\mathbb{C}H_e} V$ where $m \in \E(e, e')$. By the definition of the inflation action, we have:
\[ \Psi((f, m^\prime) \bullet (m \otimes v)) = \Psi(m^\prime m \otimes v) = (\mathbf{1}_e, m^\prime m) \otimes v. \]
On the other hand, acting on the image gives:
\begin{equation*}
\begin{split}
(f, m^\prime) \bullet \Psi(m \otimes v) &= (f, m^\prime) \bullet ((\mathbf{1}_e, m) \otimes v) \\ 
&= ((f, m^\prime) \cdot (\mathbf{1}_e, m)) \otimes v \\
&= (f \ast m \cdot \mathbf{1}_e, m^\prime m) \otimes v \\
&= (f \ast m, m^\prime m) \otimes v \\
&= (\mathbf{1}_e, m^\prime m) \otimes v.
\end{split}
\end{equation*}

To prove that $\Psi$ is an isomorphism, we construct its inverse 
\[ \Phi: \mathbb{C}(G \wr \E)e \otimes_{\mathbb{C}(G \wr H_e)} \Inf(V) \to \Inf(\mathbb{C}\E e \otimes_{\mathbb{C}H_e} V). \]
Define $\Phi$ on spanning elements by $\Phi((f, m) \otimes v) = m \otimes v$. To show that $\Phi$ is well-defined, let $(g, h) \in G \wr H_e$; then:
\begin{equation*}
\begin{split}
\Phi((f, m) \cdot (g, h) \otimes v) &= \Phi((f\ast h \cdot g, mh) \otimes v) \\
&= mh \otimes v.
\end{split}
\end{equation*}
On the other hand:
\begin{equation*}
\begin{split}
\Phi((f, m) \otimes (g, h)\bullet v) &= \Phi((f, m) \otimes h\bullet v) \\
&= m \otimes h\bullet v \\
&= mh \otimes v.
\end{split}
\end{equation*}
Thus $\Phi$ is well-defined. 

To show that $\Phi$ is a $\mathbb{C}(G \wr \E)$-module homomorphism, let $(g, m^\prime) \in G \wr \E$ and $(f, m) \otimes v$ be a spanning element of the domain. We have:
\begin{equation*}
\begin{split}
\Phi((g, m^\prime) \bullet ((f, m) \otimes v)) &= \Phi(((g, m^\prime) \cdot (f, m)) \otimes v) \\
&= \Phi((g \ast m \cdot f, m^\prime m) \otimes v) \\
&= m^\prime m \otimes v.
\end{split}
\end{equation*}
On the other hand, acting after applying $\Phi$ gives:
\begin{equation*}
\begin{split}
(g, m^\prime) \bullet \Phi((f, m) \otimes v) &= (g, m^\prime) \bullet (m \otimes v) \\
&= m^\prime m \otimes v,
\end{split}
\end{equation*}
so $\Phi$ is a $\mathbb{C}(G \wr \E)$-module homomorphism.

From the definitions it follows immediately that $\Phi \circ \Psi$ is the identity on $\Inf(\mathbb{C}\E e \otimes_{\mathbb{C}H_e} V)$, as:
\[ \Phi(\Psi(m \otimes v)) = \Phi((\mathbf{1}_e, m) \otimes v) = m \otimes v. \]
Conversely, for any $(f, m) \otimes v$ in the target, we previously established the identity $(f, m) \otimes v = (\mathbf{1}_e, m) \otimes v$. Therefore:
\[ \Psi(\Phi((f, m) \otimes v)) = \Psi(m \otimes v) = (\mathbf{1}_e, m) \otimes v = (f, m) \otimes v, \]
which shows $\Psi \circ \Phi$ is the identity. Thus, $\Psi$ is an isomorphism of $\mathbb{C}(G \wr \E)$-modules.
\end{proof}

As a consequence of the fact that inflation preserves the structure of  indecomposable projective modules, we obtain the following result regarding general projective modules.

\begin{cor}
If $P$ is a projective $\mathbb{C}\E$-module, then its inflation $\Inf(P)$ is a projective $\mathbb{C}(G \wr \E)$-module.
\end{cor}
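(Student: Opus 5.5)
The plan is to reduce to indecomposable projectives and then use additivity of inflation. First, $\Inf$ is an additive functor: it does not change the underlying vector space or the underlying linear maps, only the action. Consequently $\Inf(M\oplus N)=\Inf(M)\oplus\Inf(N)$, with equality of underlying spaces and the action on each summand given by the inflated action. Second, $\mathbb{C}\E$ is finite-dimensional, so every projective $\mathbb{C}\E$-module $P$ is a finite direct sum of indecomposable projectives. As recalled before \Cref{lem:Proj_inflation}, each of these is isomorphic to $\mathbb{C}\E e\otimes_{\mathbb{C}H_e}V$ for some object $e\in\E^0$ and some $V\in\IRep(\mathbb{C}H_e)$. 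Since $\Inf$ preserves isomorphisms, we obtain
\[
\Inf(P)\cong\bigoplus_{t}\Inf(\mathbb{C}\E e_t\otimes_{\mathbb{C}H_{e_t}}V_t).
\]

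Next, apply \Cref{lem:Proj_inflation} to each summand. This identifies the summand with $\mathbb{C}(G\wr\E)e_t\otimes_{\mathbb{C}(G\wr H_{e_t})}\Inf(V_t)$, so it remains to show that these modules are projective over $\mathbb{C}(G\wr\E)$.

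For this, note that $\Inf(V_t)$ is a $\mathbb{C}(G\wr H_{e_t})$-module. Since $G\wr H_{e_t}$ is a finite group and we work over $\mathbb{C}$, Maschke's theorem makes $\Inf(V_t)$ projective, so it is a direct summand of a free module $\mathbb{C}(G\wr H_{e_t})^{m}$. Tensoring over $\mathbb{C}(G\wr H_{e_t})$ commutes with direct sums, and $\mathbb{C}(G\wr\E)e_t\otimes_{\mathbb{C}(G\wr H_{e_t})}\mathbb{C}(G\wr H_{e_t})\cong\mathbb{C}(G\wr\E)e_t$. Here $e_t$ denotes the identity morphism $(\mathbf{1}_{e_t},\id_{e_t})$, an idempotent of $\mathbb{C}(G\wr\E)$, so $\mathbb{C}(G\wr\E)e_t$ is a direct summand of the regular module. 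Hence $\mathbb{C}(G\wr\E)e_t\otimes\Inf(V_t)$ is a direct summand of $(\mathbb{C}(G\wr\E)e_t)^m$ and is therefore projective. Alternatively, one can argue as in the paragraph preceding \Cref{lem:Proj_inflation}: $\Inf(V_t)$ is simple, hence of the form $\mathbb{C}(G\wr H_{e_t})p$ for a primitive idempotent $p$, and the tensor product is then $\mathbb{C}(G\wr\E)p$, which is projective.

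Finally, a finite direct sum of projective modules is projective, so $\Inf(P)$ is projective. The only point that needs care is making sure that the module isomorphism $P\cong\bigoplus_t(\cdots)$ passes through $\Inf$. This holds because $\Inf$ is a functor, so module isomorphisms go to module isomorphisms. No step here is a serious obstacle; the substantive work was already done in \Cref{lem:Proj_inflation}.
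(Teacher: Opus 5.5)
Your proposal is correct and follows the paper's proof. As there, you decompose $P$ into indecomposable projectives of the form $\mathbb{C}\E e\otimes_{\mathbb{C}H_e}V$, apply \Cref{lem:Proj_inflation} to each summand, and use that $\Inf$ commutes with finite direct sums. The only difference is that you justify directly, via Maschke's theorem and the idempotent $(\mathbf{1}_e,\id_e)$, why $\mathbb{C}(G\wr\E)e\otimes_{\mathbb{C}(G\wr H_e)}\Inf(V)$ is projective, whereas the paper takes this from the classification of indecomposable projectives of EI-category algebras applied to $G\wr\E$.
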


\begin{proof}
Each projective $\mathbb{C}\E$-module $P$ can be decomposed into a direct sum of indecomposable projectives, each isomorphic to $\mathbb{C}\E e \otimes_{\mathbb{C}H_e} V$ for some $e \in \E^0$ and $V \in \IRep(\mathbb{C}H_e)$. According to \Cref{lem:Proj_inflation}, the inflation of each such indecomposable projective is an indecomposable projective $\mathbb{C}(G \wr \E)$-module. Since the inflation functor $\Inf$ preserves direct sums, it follows that $\Inf(P)$ is a direct sum of projective modules and is, therefore, projective.
\end{proof}

The next step is to characterize the radical of an EI-category algebra.

\begin{prop}[{\cite[Proposition 4.6]{Li2011}}]
Let $\E$ be a finite EI-category. The radical $\Rad(\mathbb{C}\E)$ is the subspace spanned by all non-invertible morphisms in $\E$.
\end{prop}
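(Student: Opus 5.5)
We will prove that $\Rad(\mathbb{C}\E)$ equals the subspace $N$ spanned by the non-invertible morphisms of $\E$. The proof has two steps: first show that $N$ is a nilpotent two-sided ideal, so $N\subseteq\Rad(\mathbb{C}\E)$; then show that $\mathbb{C}\E/N$ is semisimple, which forces $\Rad(\mathbb{C}\E)\subseteq N$. We use the characterization of $\Rad(A)$ as the unique maximal nilpotent two-sided ideal.

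\emph{Step 1: $N$ is an ideal.} Let $m$ be non-invertible and let $m'$ be a morphism with $m'm$ defined. We claim $m'm$ is non-invertible. Suppose instead that $m'm$ is an isomorphism $c\to c''$, where $m\colon c\to c'$. Then $m$ is a split mono and $m'$ is a split epi, so both $\E(c,c')$ and $\E(c',c)$ are non-empty. Take any $u\in\E(c',c)$. Then $um\in\E(c,c)$, and this is a group by the EI property, so $um$ is invertible. Likewise $mu\in\E(c',c')$ is invertible. Hence $m$ has a left inverse and a right inverse, so $m$ is invertible, a contradiction. The same argument applies on the other side, so $N$ is a two-sided ideal. (Products that are undefined are $0$ in $\mathbb{C}\E$.)

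\emph{Step 2: $N$ is nilpotent.} This is the key point. Define a preorder on objects by $c\le c'$ iff $\E(c,c')\neq\emptyset$. By the argument in Step 1, a non-invertible morphism $c\to c'$ has $c\not\cong c'$. We also need to exclude $c'\le c$. If there were morphisms $m\colon c\to c'$ and $u\colon c'\to c$, then $um$ and $mu$ are endomorphisms, hence invertible, so $m$ is invertible. Thus every non-invertible morphism goes strictly up in the partial order on isomorphism classes. Since $\E$ is finite, this poset has some finite height $h$. Any composable product of $h+1$ non-invertible morphisms would have to climb $h+1$ strict steps, which is impossible, so every such product is undefined. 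Therefore $N^{h+1}=0$.

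\emph{Step 3: the quotient is semisimple.} The invertible morphisms span a complement to $N$, and modulo $N$ a product of two isomorphisms is either an isomorphism or $0$. Hence $\mathbb{C}\E/N$ is isomorphic to the algebra of the groupoid of isomorphisms of $\E$. This groupoid algebra is Morita equivalent to $\prod_{[c]}\mathbb{C}\,\E(c,c)$, a product of group algebras over representatives of the isomorphism classes. Alternatively, it is directly a product of matrix algebras over the $\mathbb{C}H_c$. By Maschke's theorem each $\mathbb{C}H_c$ is semisimple, so $\mathbb{C}\E/N$ is semisimple and $\Rad(\mathbb{C}\E)\subseteq N$. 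Combined with Steps 1 and 2, this gives $\Rad(\mathbb{C}\E)=N$.

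The main obstacle is Step 2: showing that non-invertible morphisms strictly increase a well-founded order. This is exactly where the EI hypothesis is essential. Step 3 is routine once one observes that a connected groupoid algebra is a matrix algebra over the vertex group algebra.
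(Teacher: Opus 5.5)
Your proof is correct. The paper gives no proof of this statement; it only cites \cite[Proposition 4.6]{Li2011}. You supply the standard self-contained argument in three steps. The span $N$ of non-invertible morphisms is a two-sided ideal. It is nilpotent because every non-invertible morphism strictly raises the partial order on isomorphism classes of objects. Finally, $\mathbb{C}\E/N$ is the algebra of the groupoid of isomorphisms, a product of matrix algebras over the group algebras $\mathbb{C}H_c$, and hence semisimple.

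Compared with the bare citation, your route adds three things:
\begin{enumerate}
\item Steps 1 and 2 also prove a fact the paper states without proof right after the proposition: a morphism $c\to c'$ in an EI-category is non-invertible if and only if $c\not\cong c'$.
\item Passing through the groupoid algebra means you never need $\E$ to be skeletal. The paper needs this generality, since it applies the result to $\E_n$ and $G\wr\E_n$, whose objects are all subsets of $[n]$.
\item It shows where the ground field matters. Steps 1 and 2 hold over any field, and only Step 3 uses Maschke's theorem. If the characteristic divided some $|H_c|$, the quotient would not be semisimple and the radical would be strictly larger than $N$.
\end{enumerate}
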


In an EI-category, a morphism $m \in \E(e, e')$ is non-invertible if and only if its domain and codomain are not isomorphic ($e \not\simeq e'$).

\begin{lem}
Let $P$ be a projective $\mathbb{C}\E$-module. Then $\Inf(\Rad P) = \Rad(\Inf P)$.
\end{lem}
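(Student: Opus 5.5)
We reduce to indecomposable projectives and compute both radicals via the algebra radicals. Since $\Inf$ is additive, and both $\Rad$ of a module and $\Inf$ commute with finite direct sums, it suffices to treat $P=\mathbb{C}\E e\otimes_{\mathbb{C}H_e}V$ with $V\in\IRep(\mathbb{C}H_e)$. By \Cref{lem:Proj_inflation} we may identify $\Inf(P)$ with $\mathbb{C}(G\wr\E)e\otimes_{\mathbb{C}(G\wr H_e)}\Inf(V)$ via $\Psi$, so that $m\otimes v\mapsto(\mathbf{1}_e,m)\otimes v$.

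We use $\Rad(M)=\Rad(A)M$ together with the proposition of \cite{Li2011}: $\Rad(\mathbb{C}\E)$ is spanned by the non-invertible morphisms of $\E$, and $\Rad(\mathbb{C}(G\wr\E))$ is spanned by the non-invertible morphisms of $G\wr\E$. A morphism $(f,m)$ of $G\wr\E$ is non-invertible exactly when $m$ is non-invertible in $\E$, since invertibility is decided by whether domain and codomain are isomorphic, and the objects and isomorphism relation coincide. The identity of the underlying sets also requires justification. The argument in \Cref{lem:Proj_inflation} only uses that a morphism $e\to e'$ of $\E$ is a map with image $e'$, so that $\mathbf{1}_{e'}\ast m=\mathbf{1}_e$; this holds for surjective-type categories like $\E_n$. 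We will assume the same setting here.

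First, $\Rad(P)=\Rad(\mathbb{C}\E)P$ is spanned by the elements $nm\otimes v$ with $n$ non-invertible. It therefore equals the span of $m'\otimes v$ where $m'\in\E(e,e')$ and $e'\not\simeq e$, because $m'=m'\cdot\id_e$ and conversely a composite with a non-invertible factor is non-invertible. Applying $\Psi$, $\Inf(\Rad P)$ corresponds to the span of $(\mathbf{1}_e,m')\otimes v$ with $m'$ non-invertible. Note that $\Inf(\Rad P)$ is just $\Rad P$ with the inflated action, and it is indeed a submodule.

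Second, $\Rad(\Inf P)=\Rad(\mathbb{C}(G\wr\E))\Inf(P)$ is spanned by $(g,n)\cdot(f,m)\otimes v$ with $n$ non-invertible, which by the identity $(f,m)\otimes v=(\mathbf{1}_e,m)\otimes v$ from \Cref{lem:Proj_inflation} equals $(\mathbf{1}_e,nm)\otimes v$. Conversely, every $(\mathbf{1}_e,m')\otimes v$ with $m'$ non-invertible lies here, since $(\mathbf{1}_e,m')$ is itself a non-invertible morphism and $(\mathbf{1}_e,m')\otimes v=(\mathbf{1}_e,m')\cdot((\mathbf{1}_e,\id_e)\otimes v)$. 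Thus both sides are the same subspace.

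The main obstacle is keeping straight that $\Rad(A)M$ applied to a tensor product $Ae\otimes V$ is spanned by $(\Rad(A)\cap Ae)\otimes V$, that is, by tensors whose first factor is non-invertible. A further delicate point is that "non-invertible" must be characterized by the objects not being isomorphic. For morphisms whose domain is $e$, this rests on the EI property: every endomorphism is invertible, so a morphism $e\to e'$ is invertible iff $e'\simeq e$. The rest is bookkeeping through $\Psi$ and $\Phi$.
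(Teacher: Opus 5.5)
Your proof is correct and follows essentially the same route as the paper. Both arguments reduce to $P=\mathbb{C}\E e\otimes_{\mathbb{C}H_e}V$, compute both radicals as $\Rad(A)M$ using Li's description of $\Rad(\mathbb{C}\E)$, and match the spanning sets $(\mathbf{1}_e,m)\otimes v$ with $m\colon e\to e'$, $e'\not\simeq e$, via $\Psi$/$\Phi$ and the identity $(f,m)\otimes v=(\mathbf{1}_e,m)\otimes v$. Your restriction to surjective-type categories is unnecessary: that identity and $\mathbf{1}_{e'}\ast m=\mathbf{1}_e$ only require $m$ to be a total map $e\to e'$, which every morphism of a subcategory of $\Set$ is, so your argument proves the lemma in full generality.
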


\begin{proof}
First, assume $P$ is an indecomposable projective module. Then $P \simeq \mathbb{C}\E e \otimes_{\mathbb{C}H_e} V$ for some $e \in \E^0$ and $V \in \IRep(\mathbb{C}H_e)$. The radical of $P$ is given by $\Rad P = \Rad(\mathbb{C}\E)P = \Rad(\mathbb{C}\E)e \otimes_{\mathbb{C}H_e} V$. Using the characterization of the radical for EI-category algebras, $\Rad P$ and $\Inf(\Rad P)$ are spanned by the set
\[ S = \{ m \otimes v \mid m \in \E(e, e'), \ e' \not\simeq e, \ v \in V \}. \]

On the other hand, by \Cref{lem:Proj_inflation}, $\Inf(P) \simeq \mathbb{C}(G \wr \E)e \otimes_{\mathbb{C}(G \wr H_e)} \Inf V$. The radical of the latter is $\Rad(\mathbb{C}(G \wr \E))e \otimes_{\mathbb{C}(G \wr H_e)} \Inf V$, which is spanned by simple tensors of the form $(f, m) \otimes v$ where $m \in \E(e, e')$ and $e' \not\simeq e$. As shown in the proof of \Cref{lem:Proj_inflation}, $(f, m) \otimes v = (\mathbf{1}_e, m) \otimes v$.

Under the isomorphism $\Phi: \mathbb{C}(G \wr \E)e \otimes_{\mathbb{C}(G \wr H_e)} \Inf V \to \Inf P$, the image of the spanning set 
\[ \{ (\mathbf{1}_e, m) \otimes v \mid m \in \E(e, e'), \ e \not\simeq e', \ v \in V \} \]
is exactly the set $S$ defined above. Therefore, $S$ also spans $\Rad(\Inf P)$. Since both $\Inf(\Rad P)$ and $\Rad(\Inf P)$ are spanned by the same set of elements within $\Inf P$, we have $\Rad(\Inf P) = \Inf(\Rad P)$.

The result for a general projective module $P$ follows immediately from the fact that $P$ is a direct sum of indecomposable projectives and both the radical and the inflation functor preserve direct sums.
\end{proof}

\begin{lem} \label[lemma]{lem:ProjCover_inflation}
Let $M$ be a $\mathbb{C}\E$-module and let $P \xrightarrow{\pi} M$ be its projective cover. Then $\Inf(P) \xrightarrow{\Inf(\pi)} \Inf(M)$ is the projective cover of the $\mathbb{C}(G \wr \E)$-module $\Inf(M)$.
\end{lem}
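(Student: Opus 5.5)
We check the three ingredients of a projective cover for $\Inf(\pi)\colon\Inf(P)\to\Inf(M)$: that $\Inf(P)$ is projective, that $\Inf(\pi)$ is surjective, and that $\ker\Inf(\pi)\subseteq\Rad(\Inf P)$. All three follow from results already established, together with the elementary observation that $\Inf$ changes neither underlying vector spaces nor underlying linear maps.

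First, $\Inf$ is an exact functor. Indeed, $\Inf(M)$ has the same underlying vector space as $M$, and $\Inf(\pi)$ is the same linear map as $\pi$. This linear map is a $\mathbb{C}(G\wr\E)$-homomorphism because $(f,m)\bullet\pi(v)=m\bullet\pi(v)=\pi(m\bullet v)=\pi((f,m)\bullet v)$. In particular, $\Inf(\pi)$ is surjective. Its kernel, viewed as a subspace, is $\ker\pi$, and the module structure on this subspace is the inflated one, so $\ker\Inf(\pi)=\Inf(\ker\pi)$. By the Corollary following \Cref{lem:Proj_inflation}, $\Inf(P)$ is a projective $\mathbb{C}(G\wr\E)$-module.

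The remaining and essential point is the radical condition. Since $(P,\pi)$ is a projective cover, $\ker\pi\subseteq\Rad P$, and inflation preserves this inclusion of subspaces, so
\[
\ker\Inf(\pi)=\Inf(\ker\pi)\subseteq\Inf(\Rad P).
\]
The preceding lemma gives $\Inf(\Rad P)=\Rad(\Inf P)$ for projective $P$. Hence $\ker\Inf(\pi)\subseteq\Rad(\Inf P)$, and $(\Inf P,\Inf(\pi))$ is a projective cover of $\Inf(M)$.

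The only real obstacle is this equality of radicals, and it has already been proved. What remains is to make sure both sides of that equality are computed inside the same ambient space $\Inf P$. Concretely, $\Inf(\Rad P)$ must be understood as the submodule of $\Inf P$ with underlying space $\Rad P$. This is legitimate because $\Rad P$ is stable under the inflated action, which is just the $\mathbb{C}\E$-action. Once this identification is fixed, the containments above are literal inclusions of subspaces of $\Inf P$, and the argument is complete.
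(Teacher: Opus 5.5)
Your proposal is correct and follows essentially the same route as the paper: $\Inf(\pi)$ is surjective and $\ker\Inf(\pi)=\Inf(\ker\pi)$ because inflation leaves underlying linear maps unchanged, $\Inf(P)$ is projective by the corollary to \Cref{lem:Proj_inflation}, and the radical condition comes from the preceding lemma $\Inf(\Rad P)=\Rad(\Inf P)$. Your remark that both sides must be read as subspaces of the same ambient module $\Inf P$ only makes explicit what the paper leaves implicit.
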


\begin{proof}
Recall that a surjective homomorphism $\pi: P \to M$ is a projective cover if and only if $P$ is a projective module and $\ker(\pi) \subseteq \Rad(P)$. 

Applying $\Inf$  yields
\[ \Inf(P) \xrightarrow{\Inf(\pi)} \Inf(M), \]
and it is clear that $\Inf(\pi)$ is surjective and $\ker(\Inf(\pi)) \simeq \Inf(\ker(\pi))$.

Next, we know from \Cref{lem:Proj_inflation} that $\Inf(P)$ is a projective $\mathbb{C}(G \wr \E)$-module. Finally, applying the result of our previous lemma, we have:
\[ \ker(\Inf(\pi)) \simeq \Inf(\ker(\pi)) \subseteq \Inf(\Rad P) = \Rad(\Inf P). \]
Since $\Inf(P)$ is projective and the kernel of the surjection is contained in its radical, $\Inf(P) \xrightarrow{\Inf(\pi)} \Inf(M)$ is the projective cover of $\Inf(M)$.
\end{proof}

\begin{cor}
Let $\mathbf{P}_\bullet \to M$ be a minimal projective resolution of a $\mathbb{C}\E$-module $M$. Then the inflated sequence $\Inf(\mathbf{P}_\bullet) \to \Inf(M)$ is a minimal projective resolution of the $\mathbb{C}(G \wr \E)$-module $\Inf(M)$.
\end{cor}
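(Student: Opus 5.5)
The plan is to build the inflated minimal resolution inductively, one term at a time, using \Cref{lem:ProjCover_inflation} at every step. Throughout I rely on one observation: $\Inf$ is an exact functor. It does not change the underlying vector spaces or linear maps; it only lets each $(f,m)$ act through $m$. So a sequence of $\mathbb{C}\E$-modules is exact if and only if its inflation is exact, and $\Inf$ commutes with kernels and images. In particular $\ker(\Inf(d)) = \Inf(\ker d)$ as subspaces, and these are $\mathbb{C}(G\wr\E)$-submodules of the inflated module.

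Write the minimal resolution as $\cdots \to P_1 \xrightarrow{d_1} P_0 \xrightarrow{\epsilon} M \to 0$. By exactness of $\Inf$, the inflated sequence $\Inf(\mathbf{P}_\bullet)\to\Inf(M)\to 0$ is exact. By the preceding corollary, each $\Inf(P_i)$ is projective, so it is a projective resolution of $\Inf(M)$.

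It remains to show the resolution is minimal. For $i=0$, \Cref{lem:ProjCover_inflation} applied to $\epsilon\colon P_0\to M$ shows that $\Inf(\epsilon)\colon\Inf(P_0)\to\Inf(M)$ is a projective cover. For $i\ge 1$, minimality of the original resolution says that $d_i$, with its codomain restricted to $K_{i-1}=\ker(d_{i-1})$ (where $d_0=\epsilon$), is a projective cover $P_i\to K_{i-1}$. Applying \Cref{lem:ProjCover_inflation} to the $\mathbb{C}\E$-module $K_{i-1}$ shows that $\Inf(P_i)\to\Inf(K_{i-1})$ is a projective cover. Since $\Inf(K_{i-1})=\ker(\Inf(d_{i-1}))$, this is exactly the minimality condition at step $i$ for the inflated sequence.

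The only point needing care is the identification $\ker(\Inf(d_{i-1}))=\Inf(\ker d_{i-1})$. The difficulty is making sure that the projective-cover condition in \Cref{lem:ProjCover_inflation}, namely that the kernel lies in $\Rad(\Inf P_i)$, is checked inside the same module. This holds because inflation is the identity on underlying spaces and maps, so all kernels and radicals are literal subspaces of the same vector spaces.

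As a consequence, the length of the inflated minimal resolution equals the original length, and hence $\pd(\Inf M)=\pd(M)$.
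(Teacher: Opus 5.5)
Your proposal is correct and follows essentially the same route as the paper: exactness of $\Inf$, projectivity of each $\Inf(P_i)$, and \Cref{lem:ProjCover_inflation} applied to $\epsilon$ and to each corestricted $d_i\colon P_i\to\ker(d_{i-1})$, via the identification $\ker(\Inf(d_{i-1}))=\Inf(\ker d_{i-1})$. You are slightly more explicit than the paper about the corestriction and about kernels and radicals being literal subspaces of the same underlying spaces, but the argument is the same.
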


\begin{proof}
Let $\mathbf{P}_\bullet$ be the resolution $\dots \to P_1 \xrightarrow{d_1} P_0 \xrightarrow{\epsilon} M \to 0$. It is easy to verify that $\Inf$ is an exact functor. Therefore, the sequence 
\[ \dots \to \Inf(P_1) \xrightarrow{\Inf(d_1)} \Inf(P_0) \xrightarrow{\Inf(\epsilon)} \Inf(M) \to 0 \]
is exact. By \Cref{lem:Proj_inflation}, each $\Inf(P_i)$ is a projective $\mathbb{C}(G \wr \E)$-module. 

By definition, the resolution $\mathbf{P}_\bullet$ is minimal if $\epsilon$ is a projective cover and each $d_n$ induces a projective cover $P_n \to \ker(d_{n-1})$. By \Cref{lem:ProjCover_inflation}, inflation preserves the projective cover property. Thus, $\Inf(\epsilon)$ is the projective cover of $\Inf(M)$ and each $\Inf(d_n)$ induces the projective cover of $\Inf(\ker(d_{n-1})) \simeq \ker(\Inf(d_{n-1}))$. It follows that $\Inf(\mathbf{P}_\bullet)$ is a minimal projective resolution.
\end{proof}

\begin{cor} \label[corollary]{cor:projective_dimension_and_inflation}
The inflation functor preserves the projective dimension of modules. That is, for any $\mathbb{C}\E$-module $M$, we have
\[ \pd_{\mathbb{C}\E}(M) = \pd_{\mathbb{C}(G \wr \E)}(\Inf M). \]
\end{cor}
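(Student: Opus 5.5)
The plan is to read the projective dimension off the minimal projective resolutions, using the preceding corollary as the main input. Let $M$ be a $\mathbb{C}\E$-module and fix a minimal projective resolution $\mathbf{P}_\bullet \to M$,
\[ \cdots \to P_1 \xrightarrow{d_1} P_0 \xrightarrow{\epsilon} M \to 0 . \]
By the previous corollary, the inflated complex $\Inf(\mathbf{P}_\bullet)\to\Inf(M)$ is a minimal projective resolution of $\Inf(M)$ over $\mathbb{C}(G\wr\E)$. Minimal projective resolutions are unique up to isomorphism of complexes, and the projective dimension was defined as the length of the minimal resolution. Hence $\pd_{\mathbb{C}(G\wr\E)}(\Inf M)$ is the largest $n$ with $\Inf(P_n)\neq 0$, or $\infty$ if no such largest $n$ exists.

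It therefore suffices to show that $\Inf(P_n)\neq 0$ if and only if $P_n\neq 0$. This is immediate: inflation does not change the underlying vector space, only lets $(f,m)$ act through $m$. So $\Inf(P_n)=0$ exactly when $P_n=0$. Consequently the two resolutions have the same length, finite or infinite, and $\pd_{\mathbb{C}\E}(M)=\pd_{\mathbb{C}(G\wr\E)}(\Inf M)$.

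The only point needing care, and the closest thing to an obstacle, is that minimality is used correctly. A nonminimal resolution could be longer than the projective dimension. This is why we invoke the corollary, which guarantees that inflating a minimal resolution gives a minimal one, rather than merely exactness of $\Inf$ together with preservation of projectives. With minimality secured, the length of the minimal resolution equals the projective dimension by the definition recalled earlier, and no further computation is needed.
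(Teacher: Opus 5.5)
Your proof is correct and matches the paper's intended argument: the paper states this corollary without proof as an immediate consequence of the preceding corollary, which says that inflating a minimal projective resolution yields a minimal one. Combined with the observation that inflation leaves the underlying vector space unchanged, so $\Inf(P_n)\neq 0$ if and only if $P_n\neq 0$, this is exactly your argument.
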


\subsection{The Global Dimension}

Finally, we apply the preceding results to compute the global dimension of $\mathbb{C}(G\wr \PT_n)$.
Recall that $\E_n$ is the EI-category whose objects are the subsets of $[n]=\{1,\ldots, n\}$ and whose morphisms are onto functions. Since $\mathbb{C}(G\wr \PT_n)$ is Morita equivalent to the category algebra $\mathbb{C}(G \wr \E_n)$, we may instead determine the global dimension of the latter.

\begin{thm}
The global dimension of the category algebra $\mathbb{C}(G \wr \E_n)$ is $n-1$.
\end{thm}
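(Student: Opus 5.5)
We prove the two inequalities $\gd \mathbb{C}(G\wr\E_n)\geq n-1$ and $\gd \mathbb{C}(G\wr\E_n)\leq n-1$ separately. By Morita invariance we may replace $G\wr\E_n$ by its skeleton $G\wr\SE_n$, and $\E_n$ by $\SE_n$, and pass freely between them.

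\emph{Lower bound.} We use the result of \cite{Stein2019} that $\gd\mathbb{C}\PT_n=n-1$. Since $\mathbb{C}\PT_n\simeq\mathbb{C}\E_n$ by \Cref{thm:Ehresmann_Iso} (the case $G$ trivial), this means $\gd\mathbb{C}\E_n=n-1$. So there is a simple $\mathbb{C}\E_n$-module $M$ with $\pd_{\mathbb{C}\E_n}(M)=n-1$. Every simple module of an EI-category algebra is concentrated at a single object $e$, with the non-invertible morphisms acting as zero, and restricts there to a simple $\mathbb{C}H_e$-module $V$. Hence $\Inf(M)$ is again a module concentrated at $e$, equal to $\Inf(V)$, which is a simple $G\wr H_e$-module. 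It is therefore a simple $\mathbb{C}(G\wr\E_n)$-module. By \Cref{cor:projective_dimension_and_inflation}, $\pd\Inf(M)=n-1$, so $\gd\mathbb{C}(G\wr\E_n)\geq n-1$.

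\emph{Upper bound.} We use the fact recalled earlier that the global dimension is bounded by the length of the longest path in the quiver. In the skeleton $G\wr\SE_n$ the objects are $[0],\dots,[n]$, and by \Cref{lem:IrreducibleMorphisms} and \Cref{thm:QuiverOfEICategories} every arrow goes from a vertex over $[k+1]$ to a vertex over $[k]$. A path therefore has length at most $n$, which only gives $\gd\leq n$. To get $n-1$ we examine the arrows ending at $[0]$. An arrow into $[0]$ would require irreducible morphisms $[1]\to[0]$, but there are no onto functions from a nonempty set to $\emptyset$, so $G\wr\SE_n([1],[0])=\emptyset$. The object $[0]$ is thus isolated, and every path lives on the objects $[1],\dots,[n]$, giving length at most $n-1$. (The statement of \Cref{lem:IrreducibleMorphisms} should be read with this caveat for $k=0$.) Hence $\gd\leq n-1$. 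Combining the two bounds gives equality.

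\emph{Main obstacle.} The delicate step is the lower bound, specifically checking that inflation sends simple modules to simple modules, so that the supremum over simples is really attained. Equivalently, it is checking that the pattern of projective dimensions transfers. Should the "concentrated at one object" description need more justification, we would instead argue via the projective cover lemmas: inflation is exact and faithful, and \Cref{cor:projective_dimension_and_inflation} applies to any module. It then suffices to note that $\gd$ equals the supremum of $\pd$ over all finite-dimensional modules, so $\pd\Inf(M)=n-1$ already forces $\gd\geq n-1$ whether or not $\Inf(M)$ is simple. This fallback makes the lower bound robust.
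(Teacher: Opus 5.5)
Your proof is correct and follows the paper's argument. The upper bound is the longest path in the quiver: arrows only go from level $[k+1]$ to level $[k]$, and none enter $[0]$. The lower bound inflates a simple $\mathbb{C}\E_n$-module of projective dimension $n-1$ from \cite{Stein2019}; the paper names the simple indexed by $[2,1^{n-2}]$, and your check that inflation sends simples to simples makes explicit a point the paper leaves implicit.
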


\begin{proof}
We first establish $n-1$ as an upper bound. The global dimension is bounded above by the length of the longest path in the quiver of the algebra. In \Cref{thm:Quiver_of_GwrE_n}, we characterized the vertices of the quiver as multi-Young diagrams. Specifically, we showed that if $U$ is a multi-Young diagram with $k$ boxes and $V$ is one with $p$ boxes, an arrow $V \to U$ can only exist if $p = k+1$. Furthermore, there are no arrows directed toward the vertex corresponding to a multipartition with 0 boxes (the "bottom" element). Consequently, the longest directed path in the quiver has length $n-1$, which provides the required upper bound: $\gd(\mathbb{C}(G \wr \E_n)) \leq n-1$.

For the lower bound, we consider the $\mathbb{C}\E_n$-simple module $M$ corresponding to the partition $[2, 1^{n-2}]$. It was shown in \cite[Corollary 6.9]{Stein2019} that $\pd_{\mathbb{C}\E_n}(M) = n-1$. By \Cref{cor:projective_dimension_and_inflation}, the inflation functor $\Inf$ preserves projective dimension, implying:
\[ \pd_{\mathbb{C}(G \wr \E_n)}(\Inf M) = \pd_{\mathbb{C}\E_n}(M) = n-1. \]
Since the global dimension of an algebra is the supremum of the projective dimensions of its simple modules, it follows that $\gd(\mathbb{C}(G \wr \E_n)) \geq n-1$. Combining these bounds, we conclude that the global dimension is exactly $n-1$.
\end{proof}

\begin{cor}
For every finite group $G$, the global dimension of the monoid algebra $\mathbb{C}(G\wr \PT_n)$ is $n-1$.
\end{cor}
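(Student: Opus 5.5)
This follows by combining \Cref{thm:Ehresmann_Iso} with the preceding theorem on $\mathbb{C}(G \wr \E_n)$. Throughout, I use that global dimension is a Morita invariant, as recalled in the previous subsection.

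First, we observed in \Cref{sec:Ehresmann} that $G\wr\PT_n$ is a finite $\mathcal{E}$-Ehresmann and right restriction monoid. Hence \Cref{thm:Ehresmann_Iso}, applied with $\Bbbk=\mathbb{C}$, gives an algebra isomorphism
\[
\mathbb{C}(G\wr\PT_n)\simeq\mathbb{C}\,\mathbf{C}(G\wr\PT_n).
\]
By the proposition identifying the associated Ehresmann category, the right-hand side is in turn isomorphic to $\mathbb{C}(G\wr\E_n)$. Isomorphic algebras have the same global dimension, so
\[
\gd\mathbb{C}(G\wr\PT_n)=\gd\mathbb{C}(G\wr\E_n).
\]
The preceding theorem shows that this equals $n-1$.

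Nothing here is a real obstacle; the statement is a direct transfer along an isomorphism. The step most worth checking is the edge case $n=0$ or $n=1$. There the lower-bound module $[2,1^{n-2}]$ used in the theorem does not exist, so that theorem must be read for $n\ge 2$. For $n=1$, the quiver has no arrows, since the only possible arrows go from vertices with $1$ box to vertices with $0$ boxes, and by \Cref{thm:Quiver_of_GwrE_n} these counts vanish because no box can be removed from the empty diagram. An algebra with no arrows in its quiver is semisimple, so its global dimension is $0=n-1$. Thus the corollary holds for all $n\ge1$.
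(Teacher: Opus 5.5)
Your proof is correct and follows the paper's route: the corollary comes from transferring the theorem on $\mathbb{C}(G\wr\E_n)$ along $\mathbb{C}(G\wr\PT_n)\simeq\mathbb{C}\,\mathbf{C}(G\wr\PT_n)\simeq\mathbb{C}(G\wr\E_n)$ from \Cref{thm:Ehresmann_Iso}. The paper phrases this step as a Morita equivalence; your isomorphism is stronger, and either suffices. Your edge-case remark is a sound addition: the lower-bound module $[2,1^{n-2}]$ needs $n\ge 2$, while for $n=1$ the quiver has no arrows so $\gd=0=n-1$, and excluding $n=0$ (where the algebra is $\mathbb{C}$) is appropriate.
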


\section{The quiver of the wreath product of a group with the monoid of all order-preserving partial functions}
\label{sec:quiver_of_PO_n}
A partial function $\alpha \colon [n] \to [n]$ is called \emph{order-preserving} if $x \leq y \implies \alpha(x) \leq \alpha(y)$ for every $x, y$ in the domain of $\alpha$. Let $\PO_n$ be the submonoid of $\PT_n$ consisting of all order-preserving partial functions. In this section, we describe the quiver of the complex algebra $\mathbb{C}(G \wr \PO_n)$. 

In this case, the wreath product is 
\[ G \wr \PO_n = \{ (f, \alpha) \mid f \in \PT([n], G), \, \alpha \in \PO_n, \text{ and } \dom(\alpha) = \dom(f) \}. \]
Recall the set of idempotents $\mathcal{E} = \{ (1_X, \text{id}_X) \mid X \subseteq [n] \}$, which forms a subsemilattice of $G \wr \PO_n$. It is a straightforward consequence that $G \wr \PO_n$ is an $\mathcal{E}$-Ehresmann and a right restriction monoid.

Let $\EO_n$ be the subcategory of the category $\E_n$, defined in \Cref{sec:Ehresmann}, which has the same set of objects but whose morphisms consist of surjective, total order-preserving functions. It follows that $G \wr \EO_n$ is the Ehresmann category associated with $G \wr \PO_n$. Thus, by \Cref{thm:Ehresmann_Iso}, we obtain an isomorphism of algebras 
\[ \mathbb{C}(G \wr \PO_n) \simeq \mathbb{C}(G \wr \EO_n). \]

Following the approach in \Cref{sec:The_Skeleton}, we now consider the skeleton of $G \wr \EO_n$. If $\alpha \colon X \to Y$ is an order-preserving bijection, then its inverse $\alpha^{-1}$ is also order-preserving. Consequently, \Cref{lem:Isomorphis_objects} implies that two objects $X$ and $Y$ of $G \wr \EO_n$ are isomorphic if and only if $|X| = |Y|$. Let $\SEO_n$ denote the category whose objects are $[k]$ for $0 \leq k \leq n$, and whose morphisms are surjective, total order-preserving functions. The skeleton of $G \wr \EO_n$ is then the wreath product $G \wr \SEO_n$. Our goal now is to describe the quiver of the algebra $\mathbb{C}(G \wr \SEO_n)$.

The endomorphism groups in $G \wr \SEO_n$ have a straightforward structure. Since the only order-preserving bijection from $[k]$ to itself is the identity map, the endomorphism group of any object $[k]$ in $G \wr \SEO_n$ is simply the direct product $G^k$.

By applying \Cref{thm:QuiverOfEICategories} to the skeletal category $G \wr \SEO_n$, the set of vertices of $Q$ is the disjoint union of the simple modules of the endomorphism groups of the objects in the skeleton. Since the endomorphism group of the object $[k]$ is isomorphic to $G^k$, the vertex set is:
\[ \bigsqcup_{k=0}^{n} \IRep(G^k). \]
Any simple module of $G^k$ is an outer tensor product of $k$ simple modules of $G$. Thus, the vertices of the quiver are naturally indexed by sequences $(V_1, \dots, V_k)$ of length $0 \le k \le n$, where each $V_i \in \IRep(G)$. So the vertex set is:
\[ \bigsqcup_{k=0}^{n} \{ V_1 \boxtimes \dots \boxtimes V_k \mid V_i \in \IRep(G) \}. \]
For $k=0$, the unique vertex is the trivial module of the trivial group $G^0$.

Our next step is to identify the irreducible morphisms in the category $G \wr \SEO_n$. 

\begin{lem} \label[lemma]{lem:IrreducibleMorphisms_Order}
The irreducible morphisms of $G \wr \SEO_n$ are precisely the morphisms from $[k+1]$ to $[k]$ for $0 \leq k < n$. That is,
\[
\Irr(G \wr \SEO_n)([r],[k]) = 
\begin{cases} 
G \wr \SEO_n([r],[k]) & \text{if } r = k+1, \\
\emptyset & \text{otherwise.}
\end{cases}
\]
\end{lem}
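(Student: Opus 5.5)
The plan is to mirror the proof of \Cref{lem:IrreducibleMorphisms}, adapting only the factorization step to order-preserving maps. First, there are no morphisms from $[r]$ to $[k]$ when $r<k$, since no surjection exists. Endomorphisms are isomorphisms, so they are not irreducible. Hence only $r\ge k+1$ needs to be considered.

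\textbf{Case $r=k+1$.} Every morphism $[k+1]\to[k]$ is irreducible. Suppose $(f,\alpha)=(f_1,\alpha_1)\cdot(f_2,\alpha_2)$ with $(f_2,\alpha_2)\colon[k+1]\to[j]$ and $(f_1,\alpha_1)\colon[j]\to[k]$. Surjectivity forces $k\le j\le k+1$. So one of the two factors is an endomorphism, hence invertible, because $G\wr\SEO_n$ is an EI-category. The morphism itself is not an isomorphism, since its domain and codomain have different sizes.

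\textbf{Case $r>k+1$.} Given an order-preserving surjection $\alpha\colon[r]\to[k]$, I will factor $\alpha=\alpha_1\alpha_2$ with $\alpha_2\colon[r]\to[k+1]$ and $\alpha_1\colon[k+1]\to[k]$, both order-preserving surjections. This is the only step that is not copied verbatim from before. Since $r>k$, some fiber $\alpha^{-1}(i)$ has at least two elements. This fiber is an interval $\{a,\dots,b\}$ with $a<b$, because $\alpha$ is monotone. Define
\[
\alpha_2(x)=\begin{cases}\alpha(x) & x<a,\\ \alpha(x)+1 & x>a,\ \alpha(x)\ge i,\\ i & x=a,\end{cases}
\]
which can be written more simply as $\alpha_2(x)=\alpha(x)$ for $\alpha(x)<i$ or $x=a$, and $\alpha_2(x)=\alpha(x)+1$ otherwise. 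Then $\alpha_2$ is monotone and onto $[k+1]$: the value $i$ is attained at $a$, and the value $i+1$ is attained at $a+1\le b$. Let $\alpha_1\colon[k+1]\to[k]$ be the monotone surjection that merges $i$ and $i+1$, that is, $\alpha_1(y)=y$ for $y\le i$ and $\alpha_1(y)=y-1$ for $y>i$. Then $\alpha_1\alpha_2=\alpha$. Alternatively, one may cite the analogous fact for order-preserving surjections, but the explicit construction above is short.

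With the factorization in hand, the computation from \Cref{lem:IrreducibleMorphisms} carries over unchanged: $(\mathbf{1}_{[k+1]},\alpha_1)\cdot(f,\alpha_2)=(f,\alpha)$. Neither factor is an endomorphism, so neither is invertible, and therefore $(f,\alpha)$ is not irreducible.

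The only genuine point to check is that the intermediate map $\alpha_2$ stays order-preserving. Choosing to split a fiber that is an interval is what guarantees this.
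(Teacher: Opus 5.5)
Your proof is correct and follows essentially the same route as the paper. Morphisms $[k+1]\to[k]$ are irreducible because any factorization passes through $[k]$ or $[k+1]$ and so has an endomorphism factor. For $r>k+1$, the factorization $(\mathbf{1}_{[k+1]},\alpha_1)\cdot(f,\alpha_2)=(f,\alpha)$ through $[k+1]$ exhibits $(f,\alpha)$ as a product of two non-invertible morphisms. The only differences are that you prove the factorization of an order-preserving surjection explicitly by splitting an interval fiber, which checks out, where the paper simply cites the known lemma, and that you also dispose of the cases $r\le k$, which the paper leaves implicit.
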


\begin{proof}
As in the proof of \Cref{lem:IrreducibleMorphisms}, any morphism from $[k+1]$ to $[k]$ is clearly irreducible because any factorization would force one of the factors to be an isomorphism.
Conversely, suppose $(f,\alpha) \in G \wr \SEO_n([r],[k])$ with $r > k+1$. Since $\alpha$ is a surjective order-preserving function, it is known that $\alpha$ factors as $\alpha = \alpha_{1}\alpha_{2}$, where $\alpha_{2} \colon [r] \to [k+1]$ and $\alpha_{1} \colon [k+1] \to [k]$ are both surjective order-preserving functions (see \cite[Lemma 5.1]{Stein2016}). We may then define the morphisms $(\mathbf{1}_{[k+1]},\alpha_{1})$ and $(f,\alpha_{2})$. Since neither $\alpha_1$ nor $\alpha_2$ is a bijection, these factors are not invertible in $G \wr \SEO_n$. Their product is
\[
(\mathbf{1}_{[k+1]},\alpha_{1}) \cdot (f,\alpha_{2}) = (\mathbf{1}_{[k+1]}\ast\alpha_{2} \cdot f, \alpha_{1}\alpha_{2}) = (f, \alpha),
\]
which shows that $(f, \alpha)$ is not irreducible.
\end{proof}

 Let $V \in \IRep(G^{p})$ and $U \in \IRep(G^k)$ be vertices of the quiver. If $p \neq k+1$, there are no arrows in the quiver of $\mathbb{C}(G \wr \SEO_n)$ from $V$ to $U$ because $\Irr(G \wr \SEO_n)([p],[k])$ is empty. Consequently, we focus on the case $p = k+1$ and examine the structure of $\mathbb{C}[\Irr(G \wr \SEO_n)([k+1], [k])]$ as a $(G^k \times G^{k+1})$-module, with the action defined in \Cref{thm:QuiverOfEICategories}.

A morphism in $\Irr(G \wr \SEO_n)([k+1], [k])$ is a pair $(f, \alpha)$, where $\alpha \colon [k+1] \to [k]$ is a surjective order-preserving function and $f \colon [k+1] \to G$.
It is easy to see that there are exactly $k$ surjective order-preserving maps from $[k+1]$ to $[k]$, denoted $\sigma_1, \dots, \sigma_k$. The map $\sigma_i$ is defined by:
\[ 
\sigma_i(j) = 
\begin{cases} 
j & \text{if } j \leq i, \\
j-1 & \text{if } j > i.
\end{cases} 
\]
so $\alpha=\sigma_i$ for some $i$.
Set $M = \mathbb{C}[\Irr(G \wr \SEO_n)([k+1], [k])]$ and define \[M_i = \text{span}\{ (f, \sigma_i) \mid f \in G^{[k+1]} \}.\]
We naturally identify $G^{[k]}$ with $G^k$. 
\begin{lem} \label[lemma]{lem:ModuleDecomposition}
The $(G^k \times G^{k+1})$-module $M$  decomposes as a direct sum of $k$ submodules:
\[ M \cong \bigoplus_{i=1}^k M_i, \]
\end{lem}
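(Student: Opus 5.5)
The plan is to show that $M$ is the internal direct sum of the subspaces $M_1,\dots,M_k$, and that each $M_i$ is stable under the action of $G^k\times G^{k+1}$.

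\textbf{Direct sum as vector spaces.} Every irreducible morphism $[k+1]\to[k]$ has the form $(f,\sigma_i)$ for a unique $i$ and a unique $f\in G^{[k+1]}$. Hence the basis of $M$ is the disjoint union over $i$ of the bases $\{(f,\sigma_i)\mid f\in G^{[k+1]}\}$ of the $M_i$. This gives $M=\bigoplus_{i=1}^k M_i$ as vector spaces.

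\textbf{Invariance of each $M_i$.} Because $\SEO_n$ has only identity automorphisms, every element of $G^k$ is of the form $(h_1,\id_{[k]})$ with $h_1\in G^{[k]}$. Likewise, every element of $G^{k+1}$ is $(h_2,\id_{[k+1]})$. Inverses in these groups are pointwise: $(h_2,\id)^{-1}=(h_2^{-1},\id)$. Using the composition rule $(f',m')\cdot(f,m)=((f'\circ m)\cdot f,m'm)$, I would compute
\[
(h_1,\id_{[k]})\cdot(f,\sigma_i)\cdot(h_2^{-1},\id_{[k+1]})=\bigl(((h_1\ast\sigma_i)\cdot f)\cdot h_2^{-1},\ \sigma_i\bigr).
\]
The second coordinate is still $\sigma_i$, so the action sends basis elements of $M_i$ to basis elements of $M_i$. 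Thus each $M_i$ is a $(G^k\times G^{k+1})$-submodule, and indeed a permutation module.

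\textbf{Main obstacle.} The only step needing care is checking that the first coordinate is a total function on $[k+1]$, so that the product is a genuine element of $M_i$. This follows because $\sigma_i$ is total, so $h_1\ast\sigma_i$ is defined everywhere on $[k+1]$, and the pointwise product with $f$ and $h_2^{-1}$ is therefore also total. Everything else is immediate, so I expect the proof to be short.
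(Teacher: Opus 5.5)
Your proposal is correct and follows the paper's proof essentially step for step. The paper also first partitions the basis of $M$ by the underlying surjection $\sigma_i$ to get the vector-space decomposition. It then computes $(h,\id_{[k]})\cdot(f,\sigma_i)\cdot(g^{-1},\id_{[k+1]})=((h\ast\sigma_i)\cdot f\cdot g^{-1},\sigma_i)$ to conclude that each $M_i$ is invariant. Your extra totality check is harmless but automatic, since $G\wr\SEO_n$ is a wreath product over a functor into $\Set$.
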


\begin{proof}
Since the set of all such pairs $(f, \alpha)$ forms a basis for $M$, and since the $M_i$ are disjoint except for the zero vector and their union spans $M$, we can write $M$ as a direct sum of vector spaces \[ M \cong \bigoplus_{i=1}^k M_i. \]
To show this is a decomposition of $(G^k \times G^{k+1})$-modules, we examine the action defined in \Cref{thm:QuiverOfEICategories}. For $h \in G^k$ and $g \in G^{k+1}$, the action on a basis element $(f, \sigma_i)$ is:
\[ (h, g) \bullet (f, \sigma_i) = (h , \text{id}_{[k]}) \cdot (f, \sigma_i) \cdot (g, \text{id}_{[k+1]})^{-1}. \]
Since $(g, \text{id}_{[k+1]})^{-1}=(g^{-1}, \text{id}_{[k+1]})$ and by calculating the product in the wreath product category:
\[  (h , \text{id}_{[k]}) \cdot (f, \sigma_i) \cdot (g^{-1}, \text{id}_{[k+1]}) = ((h\ast \sigma_i)\cdot f\cdot g^{-1} , \sigma_i). \]

Crucially, the underlying order-preserving map $\sigma_i$ remains unchanged by the action of the endomorphism groups so each subspace $M_i$ is a submodule of $M$.
\end{proof}

For each $1 \leq i \leq k$, let $X_i = \{ (f, \sigma_i) \mid f \in G^{[k+1]} \}$ denote the natural basis for the subspace $M_i$. By \Cref{lem:ModuleDecomposition}, the action of $G^k \times G^{k+1}$ preserves $M_i$ and maps basis elements to basis elements. Consequently, we can view $M_i$ as the permutation module $\mathbb{C}X_i$ arising from the action of $G^k \times G^{k+1}$ on the set $X_i$. To analyze the structure of this permutation module, we first establish that this action is transitive.

\begin{lem} \label[lemma]{lem:TransitiveAction2}
For each $1 \leq i \leq k$, the action of $G^k \times G^{k+1}$ on the set $X_i$ is transitive.
\end{lem}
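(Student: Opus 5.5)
We show that every element of $X_i$ lies in the orbit of the base point $(\mathbf{1}_{[k+1]}, \sigma_i)$, mirroring the proof of \Cref{lem:TransitiveAction}. The situation is simpler here: the underlying map $\sigma_i$ is never changed by the action, so we only need to reach an arbitrary $G$-valued function $f$.

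By the computation in the proof of \Cref{lem:ModuleDecomposition}, the action of $(h,g) \in G^k \times G^{k+1}$ on $(f,\sigma_i)$ is
\[
(h,g)\bullet(f,\sigma_i) = ((h\ast\sigma_i)\cdot f\cdot g^{-1}, \sigma_i),
\]
where the product is pointwise in $G$.

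Given an arbitrary $(f,\sigma_i)\in X_i$, take $h=\mathbf{1}_{[k]}$ and $g=f^{-1}$, the pointwise inverse. Then
\[
(h\ast\sigma_i)\cdot \mathbf{1}_{[k+1]}\cdot g^{-1} = \mathbf{1}_{[k+1]}\cdot f = f,
\]
since $\mathbf{1}_{[k]}\ast\sigma_i=\mathbf{1}_{[k+1]}$ (as $\sigma_i$ is total). Hence $(\mathbf{1}_{[k]}, f^{-1})\bullet(\mathbf{1}_{[k+1]},\sigma_i)=(f,\sigma_i)$, so the orbit of the base point is all of $X_i$.

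Two points need checking, and both are bookkeeping rather than real obstacles:
\begin{itemize}
\item The elements $(g,\id_{[k+1]})$ of $G^{k+1}\cong G\wr\{\id\}$ have inverse $(g^{-1},\id_{[k+1]})$, which the proof of \Cref{lem:ModuleDecomposition} already uses.
\item $\mathbf{1}_{[k]}\ast\sigma_i=\mathbf{1}_{[k+1]}$, because $(\mathbf{1}_{[k]}\ast\sigma_i)(j)=\mathbf{1}_{[k]}(\sigma_i(j))=1_G$ for every $j\in[k+1]$.
\end{itemize}

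Note that the factor $G^{k+1}$ alone already acts transitively, indeed simply transitively, on $X_i$. This observation will presumably be used next to identify the stabilizer, as a subgroup isomorphic to $G^k$, embedded diagonally via $h\mapsto(h,h\ast\sigma_i)$.
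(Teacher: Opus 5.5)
Your proposal is correct and is essentially the paper's proof: you take the same base point $(\mathbf{1}_{[k+1]},\sigma_i)$ and the same group element $(\mathbf{1}_{[k]},f^{-1})$, and you use $\mathbf{1}_{[k]}\ast\sigma_i=\mathbf{1}_{[k+1]}$ to land on $(f,\sigma_i)$. Your extra remark that $G^{k+1}$ alone acts simply transitively is also correct, and it is consistent with the stabilizer $\{(h,h\ast\sigma_i)\}$ found in the next lemma.
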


\begin{proof}
Let $(\mathbf{1}_{[k+1]}, \sigma_i)$ be the element of $X_i$ where $\mathbf{1}_{[k+1]}$ is the constant function mapping every element of $[k+1]$ to $1_G$. For any arbitrary $(f, \sigma_i) \in X_i$, we choose $(\mathbf{1}_{[k]}, f^{-1}) \in G^k \times G^{k+1}$. Applying the action, we have:
\[
\begin{aligned}
(\mathbf{1}_{[k]}, f^{-1}) \bullet (\mathbf{1}_{[k+1]}, \sigma_i) &= (\mathbf{1}_{[k]}, \text{id}_{[k]}) \cdot (\mathbf{1}_{[k+1]}, \sigma_i) \cdot (f^{-1}, \text{id}_{[k+1]})^{-1} \\
&= (\mathbf{1}_{[k]}, \text{id}_{[k]}) \cdot (\mathbf{1}_{[k+1]}, \sigma_i) \cdot (f, \text{id}_{[k+1]}) \\
&= ((\mathbf{1}_{[k]} \ast \sigma_i) \cdot \mathbf{1}_{[k+1]} \cdot f, \sigma_i ) \\
&= (\mathbf{1}_{[k+1]} \cdot \mathbf{1}_{[k+1]} \cdot f, \sigma_i) \\
&= (f, \sigma_i).
\end{aligned}
\]
This shows that any element of $X_i$ can be reached from $(\mathbf{1}_{[k+1]}, \sigma_i)$, and thus the action is transitive.
\end{proof}

The transitivity of the action allows us to identify each $M_i$ as a permutation module induced by the stabilizer of our chosen base point. Let $K_i$ denote the stabilizer of $(\mathbf{1}_{[k+1]}, \sigma_i)$ in $G^k \times G^{k+1}$.

\begin{lem} \label[lemma]{lem:Stabilizer}
For each $1 \le i \le k$, the stabilizer $K_i$ of the element $(\mathbf{1}_{[k+1]}, \sigma_i)$ is given by
\[ K_i = \{ (h, h \ast \sigma_i) \mid h \in G^k \}. \]
\end{lem}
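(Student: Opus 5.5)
We compute the stabilizer directly from the formula for the action obtained in the proof of \Cref{lem:ModuleDecomposition}. For $(h,g)\in G^k\times G^{k+1}$ we have
\[
(h,g)\bullet(\mathbf{1}_{[k+1]},\sigma_i)=\bigl((h\ast\sigma_i)\cdot\mathbf{1}_{[k+1]}\cdot g^{-1},\sigma_i\bigr)=\bigl((h\ast\sigma_i)\cdot g^{-1},\sigma_i\bigr).
\]
Thus $(h,g)\in K_i$ if and only if $(h\ast\sigma_i)\cdot g^{-1}=\mathbf{1}_{[k+1]}$. Since both sides are total functions on $[k+1]$ and the product is pointwise, this reduces to $(h\ast\sigma_i)(j)\,g(j)^{-1}=1_G$ for every $j\in[k+1]$. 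Equivalently, $g=h\ast\sigma_i$.

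This proves both inclusions at once. If $(h,g)\in K_i$, then $g=h\ast\sigma_i$, so the pair has the stated form. Conversely, for any $h\in G^k$, the element $h\ast\sigma_i$ is a genuine element of $G^{k+1}$: it is total because $\sigma_i$ is total on $[k+1]$ and $h$ is total on $[k]$. Plugging $g=h\ast\sigma_i$ into the displayed formula returns $(\mathbf{1}_{[k+1]},\sigma_i)$. Hence $K_i=\{(h,h\ast\sigma_i)\mid h\in G^k\}$.

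For later use, we will also record the explicit form $(h\ast\sigma_i)(j)=h(\sigma_i(j))$. This function agrees with $h$ on $j\le i$, shifts $h$ for $j>i$, and repeats the value $h(i)$ at positions $i$ and $i+1$, in analogy with $\hat f$ in \Cref{lem:StabilizerDescription}.

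There is no real obstacle here. The only point requiring care is the bookkeeping of the inverse: one should use $(g,\id_{[k+1]})^{-1}=(g^{-1},\id_{[k+1]})$ together with the product rule, exactly as in \Cref{lem:ModuleDecomposition}, so that the stabilizer condition comes out as $g=h\ast\sigma_i$ rather than its inverse.
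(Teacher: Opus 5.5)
Your proposal is correct and follows the same route as the paper: apply the action formula from the proof of \Cref{lem:ModuleDecomposition} to the base point, equate first components, and solve $(h\ast\sigma_i)\cdot g^{-1}=\mathbf{1}_{[k+1]}$ to get $g=h\ast\sigma_i$. The paper leaves implicit your explicit check of the reverse inclusion and your remark that $h\ast\sigma_i$ is a total function.
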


\begin{proof}
An element $(h, g) \in G^k \times G^{k+1}$ belongs to $K_i$ if and only if $(h, g) \bullet (\mathbf{1}_{[k+1]}, \sigma_i) = (\mathbf{1}_{[k+1]}, \sigma_i)$. This condition is satisfied if and only if:
\[ ((h \ast \sigma_i) \cdot \mathbf{1}_{[k+1]} \cdot g^{-1}, \sigma_i) = (\mathbf{1}_{[k+1]}, \sigma_i). \]
Equating the first components, we obtain the equation $(h \ast \sigma_i) \cdot g^{-1} = \mathbf{1}_{[k+1]}$, which implies $g = h \ast \sigma_i$. 
\end{proof}

In particular, the elements of $K_i$ are in one-to-one correspondence with $G^k$. Note that if we write $h \in G^k$ as $h = (h_1, \dots, h_k)$, then \[h \ast \sigma_i = (h_1, \dots, h_i, h_i, h_{i+1}, \dots, h_k)\].

Since $M_i$ is a transitive permutation module, we have the following isomorphism of $(G^k \times G^{k+1})$-modules:
\[ M_i \cong \text{Ind}_{K_i}^{G^k \times G^{k+1}}(\tr_{K_i}). \]

For each $1 \leq i \leq k$, we define a group monomorphism $\varphi_i \colon G^k \to G^{k+1}$ by $\varphi_i(h) = h \ast \sigma_i$. Explicitly, for $h = (h_1, \dots, h_k) \in G^k$, we have
\[ \varphi_i(h_1, \dots, h_k) = (h_1, \dots, h_i, h_i, h_{i+1}, \dots, h_k). \]

It is easy to verify that this is indeed a group monomorphism.
\begin{prop} \label[proposition]{prop:MultiplicityInduction}
Let $U \in \IRep(G^k)$ and $V \in \IRep(G^{k+1})$. For each $1 \leq i \leq k$, the multiplicity of the simple $(G^k \times G^{k+1})$-module $U \boxtimes V^*$ in $M_i$ is equal to the multiplicity of $V$ in the induced module $\text{Ind}_{G^k}^{G^{k+1}}(U)$, where the induction is taken along the group homomorphism $\varphi_i \colon G^k \to G^{k+1}$. In terms of characters, this is expressed as:
\[ \langle U \boxtimes V^*,M_i \rangle_{G^k \times G^{k+1}} = \langle V,\Ind_{G^k}^{G^{k+1}} U \rangle_{G^{k+1}}. \]
\end{prop}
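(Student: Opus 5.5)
Since $M_i\cong\Ind_{K_i}^{G^k\times G^{k+1}}(\tr_{K_i})$ by \Cref{lem:TransitiveAction2} and \Cref{lem:Stabilizer}, we will apply Frobenius reciprocity (\Cref{thm:FrobeniusReciprocity}) and reduce to a character computation on $K_i$. This is the same approach used for $G\wr\SE_n$, but here the stabilizer is simply the graph of the monomorphism $\varphi_i$, so the computation is shorter. First,
\[
\langle U\boxtimes V^*, M_i\rangle_{G^k\times G^{k+1}}=\langle \Res_{K_i}^{G^k\times G^{k+1}}(U\boxtimes V^*),\tr_{K_i}\rangle_{K_i}.
\]
Every element of $K_i$ has the form $(h,\varphi_i(h))$ with $h\in G^k$, and $|K_i|=|G^k|$. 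Using $\chi_{V^*}=\overline{\chi_V}$ and $\chi_{U\boxtimes V^*}(h,g)=U(h)\overline{V(g)}$, the right-hand side becomes
\[
\frac{1}{|G^k|}\sum_{h\in G^k} U(h)\,\overline{V(\varphi_i(h))}.
\]

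Next we recognize this sum as an inner product over $G^k$. Set $H_i=\varphi_i(G^k)\le G^{k+1}$ and identify $U$ with an $H_i$-module via the isomorphism $\varphi_i\colon G^k\to H_i$, which is the meaning of induction along $\varphi_i$. The function $h\mapsto V(\varphi_i(h))$ is then exactly the character of $\Res^{G^{k+1}}_{H_i}V$ pulled back to $G^k$. Since $\varphi_i$ is a bijection onto $H_i$, the sum equals $\langle U,\Res^{G^{k+1}}_{H_i}V\rangle_{H_i}$.

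Applying Frobenius reciprocity once more gives $\langle \Ind_{H_i}^{G^{k+1}}U, V\rangle_{G^{k+1}}$. This is a nonnegative integer, so it is real and symmetric in its arguments, and therefore equals $\langle V,\Ind_{G^k}^{G^{k+1}}U\rangle_{G^{k+1}}$ as claimed.

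The only delicate point is bookkeeping for complex conjugates. The pairing defined in the paper conjugates its second argument, while the first step puts the conjugate on $V$ and none on $U$. Both sides are multiplicities and hence real integers, so conjugation can be applied freely whenever needed to match the order of arguments. We also need $\varphi_i$ to be injective so that summing over $h\in G^k$ is the same as summing over $H_i$; this holds because $\varphi_i$ is the stated monomorphism.
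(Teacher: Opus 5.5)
Your proposal is correct and follows essentially the same route as the paper. Frobenius reciprocity reduces the multiplicity to a character sum over $K_i=\{(h,\varphi_i(h))\mid h\in G^k\}$, which you recognize as $\langle U,\Res V\rangle_{G^k}$ with the restriction taken along $\varphi_i$, and a second application of Frobenius reciprocity finishes. Your remarks on conjugation, namely that the resulting $\langle \Ind U,V\rangle$ equals the stated $\langle V,\Ind U\rangle$ because it is a nonnegative integer, and on the injectivity of $\varphi_i$, only make explicit what the paper leaves implicit.
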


\begin{proof}
Using the character of the permutation module $M_i$ as identified in \Cref{lem:Stabilizer}:
\[ \langle U \boxtimes V^*, M_i \rangle_{G^k \times G^{k+1}} = \langle U \boxtimes V^*, \text{Ind}_{K_i}^{G^k \times G^{k+1}} \tr_{K_i} \rangle_{G^k \times G^{k+1}}. \]
Applying Frobenius reciprocity:
\[ \langle U \boxtimes V^*, \text{Ind}_{K_i}^{G^k \times G^{k+1}} \tr_{K_i} \rangle_{G^k \times G^{k+1}}=\langle \text{Res}_{K_i}^{G^k \times G^{k+1}} (U \boxtimes V^*), \tr_{K_i} \rangle_{K_i}. \]
As a summation, we have:
\[
\begin{aligned}
\langle \text{Res}_{K_i}^{G^k \times G^{k+1}} (U \boxtimes V^*), \tr_{K_i} \rangle_{K_i} &=  \frac{1}{|G^k|} \sum_{h \in G^k} (U \boxtimes V^*)(h, h\ast \sigma_i) \cdot 1\\
&=\frac{1}{|G^k|} \sum_{h \in G^k} (U(h) \cdot V^*(h\ast \sigma_i))\\
&=\frac{1}{|G^k|} \sum_{h \in G^k} (U(h) \cdot \overline{V(h\ast \sigma_i)})
\end{aligned}
\]
Note that $V(h\ast \sigma_i)=\Res_{G^k}^{G^{k+1}}V(h)$ where the restriction is taken along the homomorphism $\varphi_i$.

Therefore, the final expression becomes:
\[
\frac{1}{|G^k|} \sum_{h \in G^k} U(h) \cdot \overline{\Res_{G^k}^{G^{k+1}}V(h)} = \langle U, \Res_{G^k}^{G^{k+1}} V \rangle_{G^k}.
\]
Finally, applying Frobenius reciprocity again, we obtain:
\[
\langle U, \Res_{G^k}^{G^{k+1}} V \rangle_{G^k} = \langle \Ind_{G^k}^{G^{k+1}} U, V \rangle_{G^{k+1}}.
\]
This completes the proof.
\end{proof}
Clearly, since $\varphi_i$ acts as the identity on all coordinates $j \neq i$, investigating the induction from $G^k$ to $G^{k+1}$ along $\varphi_i$ reduces to understanding the induction from $G$ to $G \times G$ along the diagonal homomorphism $d(g)=(g,g)$.

\begin{lem} \label[lemma]{lem:TensorInduction}
Let $U_1, U_2, U_3 \in \IRep(G)$. The multiplicity of $U_1 \boxtimes U_2$ in the induced module $\Ind_G^{G \times G} U_3$ is equal to the multiplicity of $U_3$ in the tensor product $U_1 \otimes U_2$ as a $G$-module. 
\end{lem}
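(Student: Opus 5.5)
The plan is a character computation combined with Frobenius reciprocity (\Cref{thm:FrobeniusReciprocity}). Here $G$ is embedded in $G\times G$ by the diagonal map $d(g)=(g,g)$. By Frobenius reciprocity, the multiplicity of $U_1\boxtimes U_2$ in $\Ind_G^{G\times G}U_3$ equals the multiplicity of $U_3$ in $\Res^{G\times G}_G(U_1\boxtimes U_2)$, with the restriction taken along $d$. In character form,
\[
\langle \Ind_G^{G\times G}U_3, U_1\boxtimes U_2\rangle_{G\times G}=\langle U_3,\Res^{G\times G}_G(U_1\boxtimes U_2)\rangle_G .
\]

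The key step is to identify this restriction. The character of $\Res^{G\times G}_G(U_1\boxtimes U_2)$ at $g$ is the character of $U_1\boxtimes U_2$ at $d(g)=(g,g)$. Using $\chi_{U\boxtimes V}((g_1,g_2))=\chi_U(g_1)\chi_V(g_2)$, this gives
\[
(U_1\boxtimes U_2)((g,g))=U_1(g)U_2(g)=(U_1\otimes U_2)(g).
\]
Hence $\Res^{G\times G}_G(U_1\boxtimes U_2)$ has the same character as the inner tensor product $U_1\otimes U_2$, so the two are isomorphic. One can also see this directly: restricting the action $(g_1,g_2)\bullet(u_1\boxtimes u_2)=(g_1 u_1)\boxtimes(g_2u_2)$ to the diagonal is exactly the inner tensor product action.

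It follows that
\[
\langle U_3,\Res^{G\times G}_G(U_1\boxtimes U_2)\rangle_G=\langle U_3,U_1\otimes U_2\rangle_G ,
\]
which is the multiplicity of $U_3$ in $U_1\otimes U_2$. The inner product $\langle\cdot,\cdot\rangle$ is symmetric here because multiplicities are real, so the order of the arguments does not matter. The only point needing care is that Frobenius reciprocity is applied along the specific embedding $d$, as the paper stresses when introducing induction and restriction. Since $d$ is injective, this is the usual subgroup setting, so I expect no real obstacle.
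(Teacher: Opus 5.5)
Your proposal is correct and matches the paper's proof: you apply Frobenius reciprocity along the diagonal embedding $d(g)=(g,g)$, then observe that restricting $U_1\boxtimes U_2$ to the diagonal gives the inner tensor product $U_1\otimes U_2$. Your character check $U_1(g)U_2(g)$ is just a slightly more explicit justification of that identification.
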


\begin{proof}
By Frobenius reciprocity for the group pair $(G, G \times G)$, we have:
\[ \langle \Ind_G^{G \times G} U_3, U_1 \boxtimes U_2 \rangle_{G \times G} = \langle U_3, \Res_G^{G \times G} (U_1 \boxtimes U_2) \rangle_G. \]
The restriction of the external tensor product $U_1 \boxtimes U_2$ to the diagonal subgroup is the internal tensor product $U_1 \otimes U_2$. Substituting this into the inner product, we obtain:
\[ \langle U_3, \text{Res}_G^{G \times G} (U_1 \boxtimes U_2) \rangle_G = \langle U_3, U_1 \otimes U_2 \rangle_G. \]
\end{proof}

\begin{lem} \label[lemma]{lem:FinalMultiplicity}
Let $U = U_1 \boxtimes \dots \boxtimes U_k \in \text{IRep}(G^k)$ and $V = V_1 \boxtimes \dots \boxtimes V_{k+1} \in \text{IRep}(G^{k+1})$. 
The multiplicity of $U \boxtimes V^*$ in $M_i$ is non-zero only if:
\begin{itemize}
    \item $V_r \cong U_r$ for all $r < i$,
    \item $V_r \cong U_{r-1}$ for all $r > i+1$.
\end{itemize}
In this case, the multiplicity is given by the multiplicity of $U_i$ in the tensor product $V_i\otimes V_{i+1}$.
\end{lem}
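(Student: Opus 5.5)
By \Cref{prop:MultiplicityInduction}, the multiplicity of $U\boxtimes V^*$ in $M_i$ equals $\langle \Res^{G^{k+1}}_{G^k} V, U\rangle_{G^k}$, where the restriction is taken along $\varphi_i$. I will compute this restriction directly from characters rather than through induction, because $\varphi_i$ has a simple product form. Write $h=(h_1,\dots,h_k)$. Then $\varphi_i(h)=(h_1,\dots,h_i,h_i,h_{i+1},\dots,h_k)$, so
\[
V(\varphi_i(h))=\prod_{r<i}V_r(h_r)\cdot V_i(h_i)V_{i+1}(h_i)\cdot\prod_{r>i+1}V_r(h_{r-1}).
\]
This identifies $\Res^{G^{k+1}}_{G^k}V$ with the outer tensor product
\[
V_1\boxtimes\cdots\boxtimes V_{i-1}\boxtimes (V_i\otimes V_{i+1})\boxtimes V_{i+2}\boxtimes\cdots\boxtimes V_{k+1}
\]
as a $G^k$-module, where the $i$-th factor is the inner tensor product. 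Equivalently, this is the restriction of $V_i\boxtimes V_{i+1}$ to the diagonal in the $i$-th slot, as in \Cref{lem:TensorInduction}.

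Next I use that the inner product on $G^k=G\times\cdots\times G$ factors. The sum over $h\in G^k$ of a product of functions of the individual coordinates splits as a product of sums over $G$. Hence
\[
\langle \Res V, U\rangle_{G^k}=\prod_{r<i}\langle V_r,U_r\rangle_G\cdot\langle V_i\otimes V_{i+1},U_i\rangle_G\cdot\prod_{r>i+1}\langle V_r,U_{r-1}\rangle_G .
\]
Since all $U_r,V_r$ are simple, Schur orthogonality gives $\langle V_r,U_s\rangle_G=1$ if $V_r\cong U_s$ and $0$ otherwise. So the product vanishes unless $V_r\cong U_r$ for $r<i$ and $V_r\cong U_{r-1}$ for $r>i+1$. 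In that case it equals $\langle V_i\otimes V_{i+1},U_i\rangle_G$, which is the multiplicity of $U_i$ in $V_i\otimes V_{i+1}$. This is exactly the claim.

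The only delicate point is the bookkeeping of indices under $\varphi_i$: the coordinate $h_i$ feeds both $V_i$ and $V_{i+1}$, and for $r\ge i+2$ the coordinate $h_{r-1}$ feeds $V_r$. The paper's convention of using a module's name for its character, including the conjugation in $V^*$, has already been absorbed into \Cref{prop:MultiplicityInduction}. The inner products are real nonnegative integers, so the order of the arguments does not matter.
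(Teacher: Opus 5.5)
Your proof is correct and is essentially the paper's argument seen from the other side of Frobenius reciprocity. The paper decomposes $\Ind_{G^k}^{G^{k+1}}U$ along $\varphi_i$ as $U_1\boxtimes\cdots\boxtimes\Ind_G^{G\times G}(U_i)\boxtimes\cdots\boxtimes U_k$ and then applies \Cref{lem:TensorInduction}; you instead restrict $V$ along $\varphi_i$ and factor the character inner product over the coordinates of $G^k$, which makes the per-coordinate multiplicity count explicit and avoids the fact, used by the paper without comment, that induction along $\varphi_i$ splits slotwise.
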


\begin{proof}
From \Cref{prop:MultiplicityInduction}, we have:
\[ \langle M_i, U \boxtimes V^* \rangle = \langle \Ind_{G^k}^{G^{k+1}} U, V \rangle_{G^{k+1}}. \]
Since the induction along $\varphi_i$ is the identity on all components except the $i$-th one, where it is the diagonal induction, we have:
\[ \Ind_{G^k}^{G^{k+1}} U \cong U_1 \boxtimes \dots \boxtimes U_{i-1} \boxtimes \Ind_G^{G \times G}(U_i) \boxtimes U_{i+1} \boxtimes \dots \boxtimes U_k. \]
Comparing this with $V = V_1 \boxtimes \dots \boxtimes V_{k+1}$, the result follows from \Cref{lem:TensorInduction}.
\end{proof}

If we use the Kronecker delta notation:
\[ \delta_{U,V} = \begin{cases} 1 & \text{if } U \cong V \\ 0 & \text{otherwise} \end{cases} \]
then the multiplicity formula can be expressed compactly as:
\[ \langle M_i, U \boxtimes V^* \rangle = \left( \prod_{r=1}^{i-1} \delta_{U_r, V_r} \right) \left( \prod_{r=i+1}^{k} \delta_{U_r, V_{r+1}} \right) \langle U_i, V_i \otimes V_{i+1} \rangle_G. \]
In order to obtain the number of arrows from $V$ to $U$ all is left is to sum this number over $i$ from $1$ to $k$.
We can conclude:

\begin{thm} \label{thm:QuiverConstruction}
The quiver $Q$ of the algebra $\mathbb{C}(G \wr \SEO_n)$ or, equivalently, $\mathbb{C}(G \wr \PO_n)$ is constructed as follows:
\begin{itemize}
    \item \textbf{Vertices:} The set of vertices is the disjoint union $\displaystyle \bigsqcup_{k = 0}^n \IRep(G^k)$, where each vertex is a simple module of the form $U = U_1 \boxtimes \dots \boxtimes U_k$ with $U_r \in \IRep(G)$.
    \item \textbf{Arrows:} For any $V=V_1 \boxtimes \dots \boxtimes V_{k+1} \in \IRep(G^{k+1})$ and $U=U_1 \boxtimes \dots \boxtimes U_k \in \IRep(G^k)$, the number of arrows from $V$ to $U$ is given by the sum:
    \[ \sum_{i=1}^k \left( \prod_{r=1}^{i-1} \delta_{U_r, V_r} \right) \left( \prod_{r=i+1}^{k} \delta_{U_r, V_{r+1}} \right) \langle U_i, V_i \otimes V_{i+1} \rangle_G, \]
    where $\langle U_i, V_i \otimes V_{i+1} \rangle_G$ is the multiplicity of $U_i$ in the tensor product $V_i \otimes V_{i+1}$ as a $G$-module.
\end{itemize}
\end{thm}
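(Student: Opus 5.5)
The plan is to assemble \Cref{thm:QuiverConstruction} from the results already established in this section. The argument has three stages.

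\textbf{Step 1: reduce to the skeletal EI-category.} By \Cref{thm:Ehresmann_Iso}, $\mathbb{C}(G\wr\PO_n)\simeq\mathbb{C}(G\wr\EO_n)$. Since objects of the same cardinality are isomorphic, $G\wr\EO_n$ is equivalent to its skeleton $G\wr\SEO_n$. The two category algebras are therefore Morita equivalent, and the quivers coincide.

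\textbf{Step 2: vertices and the vanishing of arrows.} Each endomorphism group $G\wr\SEO_n([k],[k])\cong G^k$, so the vertex description follows immediately from part (1) of \Cref{thm:QuiverOfEICategories}. The simple $G^k$-modules are exactly the outer tensor products $U_1\boxtimes\cdots\boxtimes U_k$. By \Cref{lem:IrreducibleMorphisms_Order}, there are no arrows between $\IRep(G^p)$ and $\IRep(G^k)$ unless $p=k+1$.

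\textbf{Step 3: counting arrows when $p=k+1$.} Part (2) of \Cref{thm:QuiverOfEICategories} gives the number of arrows from $V$ to $U$ as the multiplicity of $U\boxtimes V^{\ast}$ in $M=\mathbb{C}[\Irr([k+1],[k])]$. Note that the tensor $U\otimes V^*$ there is the outer one, since $U$ and $V^*$ are modules over different factors. By \Cref{lem:ModuleDecomposition}, $M=\bigoplus_i M_i$ as $(G^k\times G^{k+1})$-modules, so multiplicities add over $i$. The summand for each $i$ is computed by \Cref{lem:FinalMultiplicity}, and summing over $i$ gives exactly the stated formula.

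\textbf{Main obstacle.} The one point needing care is the factorization used in \Cref{lem:FinalMultiplicity}:
\[
\Ind^{G^{k+1}}_{G^k}U\;\cong\;U_1\boxtimes\cdots\boxtimes U_{i-1}\boxtimes\Ind_G^{G\times G}U_i\boxtimes U_{i+1}\boxtimes\cdots\boxtimes U_k
\]
along $\varphi_i$. I would justify it by character computation and Frobenius reciprocity. Restricting $V=V_1\boxtimes\cdots\boxtimes V_{k+1}$ along $\varphi_i$ gives
\[
V_1\boxtimes\cdots\boxtimes V_{i-1}\boxtimes(V_i\otimes V_{i+1})\boxtimes V_{i+2}\boxtimes\cdots\boxtimes V_{k+1},
\]
because the $i$-th coordinate is duplicated. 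Pairing this with $U$ factorizes the inner product on $G^k$ as a product of inner products on $G$. This yields
\[
\prod_{r<i}\delta_{U_r,V_r}\;\cdot\prod_{r>i}\delta_{U_r,V_{r+1}}\;\cdot\;\langle U_i,V_i\otimes V_{i+1}\rangle_G .
\]
This is a direct computation, so I expect no genuine difficulty.

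Finally, I would double-check the index conventions. The conditions $V_r\cong U_{r-1}$ for $r>i+1$ match $\delta_{U_r,V_{r+1}}$ for $r>i$, so the statement and the lemma agree.
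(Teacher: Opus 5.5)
Your proposal is correct and follows the paper's own assembly: the Ehresmann isomorphism, passage to the skeleton $G\wr\SEO_n$, vertices and vanishing of arrows from \Cref{thm:QuiverOfEICategories} and \Cref{lem:IrreducibleMorphisms_Order}, then the decomposition $M=\bigoplus_i M_i$ and summing the per-$i$ multiplicities of \Cref{lem:FinalMultiplicity}. The only cosmetic difference is that you compute the per-$i$ multiplicity on the restriction side, via $\Res_{\varphi_i}V = V_1\boxtimes\cdots\boxtimes(V_i\otimes V_{i+1})\boxtimes\cdots\boxtimes V_{k+1}$, whereas the paper rewrites it as $\Ind_{G^k}^{G^{k+1}}U$ along $\varphi_i$ and invokes \Cref{lem:TensorInduction}; the two are equivalent by Frobenius reciprocity.
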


\begin{exmp} \label[example]{ex:Z2Quiver}
We illustrate \Cref{thm:QuiverConstruction} by constructing the quiver $Q$ of the algebra $\mathbb{C}(C_2 \wr \PO_3)$. The group $G = C_2$ has two simple modules: the trivial module $\tr$ and the sign module $\sgn$. 

The tensor product rules for these modules are straightforward:
\begin{center}
\begin{tabular}{c|cc}
$\otimes$ & $\tr$ & $\sgn$ \\ \hline
$\tr$ & $\tr$ & $\sgn$ \\
$\sgn$ & $\sgn$ & $\tr$
\end{tabular}
\end{center}
The quiver is a bit crowded, and given by:
\begin{center}
\noindent
\begin{tikzpicture}[yscale=1.3, xscale=1.0, >=stealth]
    \node (L3e) at (0,4.5) {$(\sgn, \tr, \tr)$};
    \node (L3f) at (3,4.5) {$(\sgn, \tr, \sgn)$};
    \node (L3g) at (6,4.5) {$(\sgn, \sgn, \tr)$};
    \node (L3h) at (9,4.5) {$(\sgn, \sgn, \sgn)$};
    
    \node (L3a) at (0,3.7) {$(\tr, \tr, \tr)$};
    \node (L3b) at (3,3.7) {$(\tr, \tr, \sgn)$};
    \node (L3c) at (6,3.7) {$(\tr, \sgn, \tr)$};
    \node (L3d) at (9,3.7) {$(\tr, \sgn, \sgn)$};

    \node (L2a) at (0.75,2.5) {$(\tr, \tr)$};
    \node (L2b) at (3.25,2.5) {$(\tr, \sgn)$};
    \node (L2c) at (5.75,2.5) {$(\sgn, \tr)$};
    \node (L2d) at (8.25,2.5) {$(\sgn, \sgn)$};

    \node (L1a) at (2.5,1.3) {$(\tr)$};
    \node (L1b) at (6.5,1.3) {$(\sgn)$};

    \node (L0) at (4.5,0) {$\emptyset$};

    \draw[->] (L2a) -- (L1a);
    \draw[->] (L2b) -- (L1b);
    \draw[->] (L2c) -- (L1b);
    \draw[->] (L2d) -- (L1a);

    \draw[->, double, double distance=1.2pt] (L3a) -- (L2a);
    \draw[->, double, double distance=1.2pt] (L3b) -- (L2b);
    \draw[->, double, double distance=1.2pt] (L3e) -- (L2c);
    \draw[->, double, double distance=1.2pt] (L3f) -- (L2d);

    \draw[->] (L3c) -- (L2c);
    \draw[->] (L3c) -- (L2b);
    \draw[->] (L3d) -- (L2d);
    \draw[->] (L3d) -- (L2a);
    \draw[->] (L3g) -- (L2a);
    \draw[->] (L3g) -- (L2d);
    \draw[->] (L3h) -- (L2b);
    \draw[->] (L3h) -- (L2c);

\end{tikzpicture}
\end{center}
Note that a multiplicity of two between vertices is denoted by a double-lined arrow.
\end{exmp}

\bibliographystyle{plain}
\bibliography{library}
\end{document}